\numberwithin{equation}{section}
\newtheorem{thm}{Theorem}[section]
\newtheorem{cor}[thm]{Corollary}
\newtheorem{lem}[thm]{Lemma}
\newtheorem{prop}[thm]{Proposition}
\newtheorem{defn}[thm]{Definition}
\newtheorem{rem}[thm]{Remark}
\newtheorem{exm}[thm]{Example}
\newtheorem{conj}[thm]{Conjecture}
\newcommand{\eq}[2]{\begin{equation}\label{#1}#2 \end{equation}}
\newcommand{\ml}[2]{\begin{multline}\label{#1}#2 \end{multline}}
\newcommand{\ga}[2]{\begin{gather}\label{#1}#2 \end{gather}}
\newcommand{\cD}{\mathcal D}
\newcommand{\cG}{\mathcal{G}}
\newcommand{\cI}{\mathcal I}
\newcommand{\sI}{\mathcal{I}}
\newcommand{\sL}{\mathcal{L}}
\newcommand{\cM}{\mathcal M}
\newcommand{\cO}{\mathcal{O}}
\newcommand{\sO}{\mathcal O}
\newcommand{\cV}{\mathcal V}
\newcommand{\sV}{\mathcal V}
\newcommand{\cY}{\mathcal{Y}}
\newcommand{\R}{{\bf R}}
\renewcommand{\P}{\PP}
\newcommand{\fg}{\mathfrak{g}}
\newcommand{\fl}{\mathfrak{l}}
\newcommand{\fs}{\mathfrak{s}}
\newcommand{\ft}{\mathfrak{t}}
\newcommand{\HH}{\mathbb{H}}
\newcommand{\CC}{\mathbb{C}}
\newcommand{\C}{\CC}
\newcommand{\PP}{\mathbb{P}}
\newcommand{\G}{\mathbb{G}}
\newcommand{\ZZ}{\mathbb Z}
\newcommand{\Z}{\ZZ}
\newcommand{\into}{\hookrightarrow}
\newcommand{\inj}{\into}
\newcommand{\onto}{\twoheadrightarrow}
\newcommand{\surj}{\onto}
\newcommand{\ra}{\rightarrow}
\newcommand{\half}{{1\over 2}}
\newcommand{\bra}{{\langle}}
\newcommand{\ket}{{\rangle}}
\newcommand{\br}{\buildrel}
\newcommand{\blank}{\hskip.3in}
\newcommand{\f}{{\bf f}}
\newcommand{\Proj}{{\mbox{Proj~}}}
\newcommand{\Spec}{{\mbox{Spec~}}}
\newcommand{\Sym}{{\mbox {Sym~}}}
\newcommand{\End}{{\mbox {End~}}}
\newcommand{\Hom}{{\mbox{Hom}}}
\newcommand{\Aut}{{\mbox{Aut~}}}
\newcommand{\Res}{{\mbox{Res~}}}
\newcommand{\Der}{{\mbox{Der~}}}
\def\comment#1{{}}
\def\question#1{{}}
\author{Spencer Bloch, An Huang, Bong H. Lian, Vasudevan Srinivas, and Shing-Tung Yau}
\title{On the Holonomic Rank Problem}
\begin{document}

\maketitle
\begin{abstract}
A tautological system, introduced in \cite{LSY}\cite{LY}, arises as a regular holonomic system of partial differential equations that govern the period integrals of a family of complete intersections in a complex manifold $X$, equipped with a suitable Lie group action. 
In this article, we introduce two formulas -- one purely algebraic, the other geometric -- to compute the rank of the solution sheaf of such a system for CY hypersurfaces in a generalized flag variety. The algebraic version gives the local solution space as a Lie algebra homology group, while the geometric one as the middle de Rham cohomology of the complement of a hyperplane section in $X$. We use both formulas to find certain degenerate points for which the rank of the solution sheaf becomes 1. These rank 1 points appear to be good candidates for the so-called large complex structure limits in mirror symmetry. The formulas are also used to prove a conjecture of Hosono, Lian and Yau on the completeness of the extended GKZ system when $X$ is $\P^n$. 
\end{abstract}

\tableofcontents 
\baselineskip=16pt plus 1pt minus 1pt
\parskip=\baselineskip

\pagenumbering{arabic}
\addtocounter{page}{0}
\markboth{\SMALL 
Spencer Bloch, An Huang, Bong H. Lian, Vasudevan Srinivas, and Shing-Tung Yau}
{\SMALL On the Holonomic Rank Problem}

\section{Introduction}

Let $X$ be a compact complex manifold, such that the complete linear system of anti-canonical divisors in $X$ is base point free. In \cite{LY}, the period integrals of the corresponding universal family of CY hypersurfaces is studied. It is shown that they satisfy a certain system of partial differential equations defined on the affine space $V^*=\Gamma(X,\omega_X^{-1})$ which we call a {\it tautological system}. 
When $X$ is a homogeneous manifold of a semi-simple Lie group $G$, such a system can be explicitly described. For example, one description says that the tautological system can be generated by the vector fields corresponding to the linear $G$ action on $V^*$, together with a set of quadratic differential operators corresponding to the defining relations of $X$ in $\P V$ under the Pl\"ucker embedding. The case where $X$ is a Grassmannian has been worked out in detail \cite{LSY}.

\begin{defn} \cite{LSY}\cite{LY}
Let $\hat G$ be complex Lie group, $Z:\hat G\ra\Aut V$ be a given holomorphic representation such that $Z(\hat G)$ contains $\C^\times 1_V$, and let $Z:\hat\fg\ra\End V$ be the corresponding Lie algebra representation. Let $\hat X\subset V$ be a $\hat G$-stable subvariety, and $\beta:\hat\fg\ra\C$ be a Lie algebra homomorphism, 
The tautological system $\tau(\hat X,V,\hat G,\beta)$ is the differential system generated by the operators
\begin{eqnarray*}
&Z_\beta(x):=Z(x)+\beta(x),~~~~x\in\hat\fg\cr
&p(\partial_\zeta),~~~~p(\zeta)\in I(\hat X,V).
\end{eqnarray*}
Here $\partial_\zeta\in\Der\C[V^*]$ is defined by $\partial_\zeta\cdot a=\bra\zeta,a\ket$ ($a\in V$, $\zeta\in V^*$); $I(\hat X,V)\subset\C[V]$ is the defining ideal of $\hat X\subset V$.
\end{defn}

Note that in the definition, we can view $Z(x)\in\End V$ as a differential operator on $V^*$ because $\End V\subset\Der(\Sym V)=\Der\C[V^*]$.

There are a number of important special cases of this definition that have been extensively studied in various context. For a brief overview of these special cases, see \cite{LSY}.

Let $\pi:\cY\ra B:=\Gamma(X,\omega_X^{-1})_{sm}$ be the family of smooth CY hyperplane sections in $X$, and let $\HH^{top}$ be the Hodge bundle over $B$ whose fiber at $f\in B$ is the line $\Gamma(Y_f,\omega_{Y_f})\subset H^{n-1}(Y_f)$, where $n=\dim X$. In \cite{LY}, the period integrals of this family are constructed by giving a canonical trivialization of $\HH^{top}$. Let $\Pi=\Pi(X)$ be the period sheaf of this family, i.e. the locally constant sheaf generated by the period integrals (Definition 1.1 \cite{LY}.)

\begin{thm} \label{LY-theorem}
The period integrals of the family are solutions to the tautological system $\cM=\tau(\hat X,V,\hat G,(0;1))$, where $\hat X$ is the cone over $X$ in $V=\Gamma(X,\omega_X^{-1})^*$, and $\hat G=\Aut X\times\G_m$.
\end{thm}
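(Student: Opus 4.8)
The plan is to make the abstract period integrals of \cite{LY} explicit through the adjunction/residue construction and then to check, by direct differentiation, that they are annihilated by both families of generators of $\cM$. First I would unwind the canonical trivialization of $\HH^{top}$. For smooth $f\in V^*=\Gamma(X,\omega_X^{-1})$ the reciprocal $1/f$ is a rational section of $\omega_X$ with a simple pole along $Y_f=\{f=0\}$, holomorphic on the complement $U_f:=X\setminus Y_f$; by adjunction $\omega_{Y_f}\cong(\omega_X\otimes\omega_X^{-1})|_{Y_f}\cong\cO_{Y_f}$ its Poincar\'e residue along $Y_f$ is exactly the canonical volume form $\omega_f$ trivializing $\HH^{top}$. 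Hence, for a flat family of cycles $\gamma\in H_{n-1}(Y_f)$ with Leray coboundary (tube) $\Gamma\subset U_f$, the period integral is $\Pi_\gamma(f)=\int_\gamma\omega_f=\int_\Gamma 1/f$. I would regard $[1/f]$ as a class in $H^n_{dR}(U_f)$ and the period as its Gauss--Manin pairing with $[\Gamma]$, so that all differentiations in $f$ take place at the level of de Rham cohomology.

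Next I would verify the relations coming from $I(\hat X,V)$. Since $\partial_\zeta$ is the directional derivative $\tfrac{d}{dt}\big|_{0}(\,\cdot+t\zeta)$ on functions of $f$, repeated differentiation gives, for elements $s_{i}\in V^*$,
\[
\partial_{s_{i_1}}\cdots\partial_{s_{i_k}}\frac{1}{f}=(-1)^k\,k!\,\frac{s_{i_1}\cdots s_{i_k}}{f^{k+1}},
\]
a rational section of $\omega_X$ with a pole of order $k+1$ along $Y_f$. By linearity, for homogeneous $p\in I(\hat X,V)\subset\C[V]$ of degree $k$ one gets $p(\partial_\zeta)(1/f)=(-1)^k k!\,\tilde p/f^{k+1}$, where $\tilde p$ is the section of $\omega_X^{-k}$ obtained by evaluating $p$ on the sections $s_0,\dots,s_N\in V^*$ themselves. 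The crux is that, because base-point-freeness makes $\hat X$ the affine cone over the anticanonical image $x\mapsto(s_i(x))$, the condition $p\in I(\hat X,V)$ says precisely that $p(s_0(x),\dots,s_N(x))=0$ for all $x\in X$, i.e. $\tilde p\equiv 0$ as a section of $\omega_X^{-k}$. Thus $p(\partial_\zeta)(1/f)$ is the zero form and $p(\partial_\zeta)\Pi_\gamma=0$.

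For the generators $Z_\beta(x)$ I would argue uniformly with the same explicit form. An element $x\in\mathrm{Lie}(\Aut X)$ acts on $B=V^*$ by the deformation $\delta f=\cL_{v_x}f$, where $v_x$ is the induced holomorphic vector field on $X$; hence
\[
Z(x)\Pi_\gamma=-\int_\Gamma\frac{\cL_{v_x}f}{f^2}=\int_\Gamma\cL_{v_x}\!\Big(\frac{1}{f}\Big)=\int_\Gamma d\Big(\iota_{v_x}\tfrac{1}{f}\Big)=0,
\]
since $1/f$ is a top form (so $\cL_{v_x}(1/f)=d\,\iota_{v_x}(1/f)$ is exact on $U_f$) and $\Gamma$ is a compact cycle in $U_f$; as $\beta(x)=0$ on this factor this is exactly $Z_\beta(x)\Pi=0$. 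For the $\G_m$ factor with generator $x_0$, the identity $1/(tf)=t^{-1}(1/f)$ together with $Y_{tf}=Y_f$ shows $\Pi_\gamma$ is homogeneous of degree $-1$ in $f$, so the Euler operator $Z(x_0)$ obeys $Z(x_0)\Pi=-\Pi$, i.e. $(Z(x_0)+1)\Pi=Z_\beta(x_0)\Pi=0$ because $\beta(x_0)=1$. Together these show $\Pi$ is killed by every generator of $\cM$.

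The main obstacle will be the rigor of the first step: identifying the intrinsic period with $\int_\Gamma 1/f$ and justifying termwise differentiation as $f$ varies, since $\Gamma$ must be deformed to stay inside $U_f$ and the higher-pole forms $\tilde p/f^{k+1}$ only a priori represent de Rham classes. I would dispose of this by working cohomologically throughout: the period is the flat pairing $\langle[\Gamma],[1/f]\rangle$ for the Gauss--Manin connection, the operators $\partial_\zeta$ act as that (flat) connection, and $p(\partial_\zeta)$ annihilates the period because it produces the class of the identically-zero form $\tilde p/f^{k+1}$. Once this formalism is set up, the computations of the two preceding paragraphs are short and formal.
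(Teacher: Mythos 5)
Your argument is correct, but note that this paper does not actually prove Theorem \ref{LY-theorem}: the statement is quoted from \cite{LSY} and \cite{LY}, and the proof is deferred to those references. Your computation is essentially the one given there — annihilation by $p(\partial_\zeta)$ because the $k$-fold derivative of $1/f$ is $(-1)^k k!\,\tilde p/f^{k+1}$ with $\tilde p$ the induced section of $\omega_X^{-k}$, which vanishes identically precisely because $p$ lies in the defining ideal of the cone $\hat X$; annihilation by $Z_\beta(x)$ for $x\in\fg$ via Cartan's formula and Stokes on the fixed tube (where $\beta(x)=0$); and annihilation by the $\G_m$ generator via degree $-1$ homogeneity of $\int_\Gamma 1/f$ (where $\beta=1$). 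The only substantive difference is that \cite{LY} realizes the Poincar\'e residue by lifting to a principal bundle over $X$, so as to obtain a canonically normalized global construction valid for general complete intersections of general type, whereas you work directly with $1/f$ as a global meromorphic section of $\omega_X$ — which is legitimate, and simpler, in the anticanonical hypersurface case at hand.
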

This was proved in \cite{LSY} for $X$ a partial flag variety, and in full generality in \cite{LY}, in which the result was also generalized to hyperplane sections of general type. 
Applying an argument of \cite{Kapranov1997}, it was also shown that if $X$ has only a finite number of $G=\Aut X$-orbits, then $\cM$ is regular holonomic (Theorem 3.4 \cite{LSY}.) In this case, if $X=\sqcup_{l=1}^rX_l$ is the decomposition into $G$-orbits, then the singular locus of $\cM$ is contained in $\cup_{l=1}^rX_l^\vee$. Here $X_l^\vee\subset V^*$ is the conical variety whose projectivization $\PP(X_l^\vee)$ is the projective dual to the Zariski closure of $X_l$ in $X$. 


In the well-known applications of variation of Hodge structures in mirror symmetry, it is important to decide which solutions of our differential system come from period integrals. By Theorem \ref{LY-theorem}, the period sheaf is a subsheaf of the solution sheaf of a tautological system. Thus an important problem is to decide when the two sheaves actually coincide. If they do not coincide, how much larger is the solution sheaf? From Hodge theory, we know that  (see Proposition \ref{period-rank}) the rank of the period sheaf is given by the dimension of the middle vanishing cohomology of the smooth hypersurfaces $Y_f$.
Therefore, to answer those questions, it is desirable to know precisely the holonomic rank of our tautological system.

Let us recall what is known on these questions in a number of special cases.
In the case of CY hypersurfaces in, say, a semipositive toric manifold $X$, it is known \cite{GKZ1990}\cite{Adolphson} that the holonomic rank of the GKZ hypergeometric system in this case is the normalized volume of the polytope generated by the exponents of the monomial sections in $\Gamma(X,\omega_X^{-1})$. This number is also the same as the degree of the anticanonical embedding $X\into\P\Gamma(X,\omega_X^{-1})^*$. However, it is also known \cite{HLY1994} that this number always exceeds (and is usually a lot larger than) the rank of the period sheaf. If one considers the extended GKZ hypergeometric system, where the torus $T$ acting on $X$ is replaced by the full automorphism group $\Aut X$, one would expect that the rank of the extended system to be closer to that of the period sheaf. In fact, based on numerical evidence, it was conjectured \cite{HLY1994} that for $X=\P^n$ (which lives in both the toric world and the homogeneous world), the rank of $\cM$ coincides with that of the period sheaf at generic points. In the case when $X=X_A$ is a spherical variety of a reductive group $G$ corresponding to a given set of irreducible $G$-modules $A$, Kapranov \cite{Kapranov1997} showed that the rank of his A-hypergeometric system is bounded above by the degree of embedding $X_A\subset\PP M_A^*$, if the cone $\hat X_A$ over $X_A$ in $M_A^*$ is assumed to be Cohen-Macaulay. This result was generalized to any smooth $G$-variety $X$ with a finite number of $G$-orbits by Lian, Song and Yau \cite{LSY}.  Note, however, that the rank upper bound in each case cited above makes no assumptions about whether the underlying D-module arises from the variation of Hodge structures of CY varieties. Moreover, since the holonomic rank gives the number of independent solutions only away from the singular locus, the bound yields no information about solutions at singularities.

In this paper, we introduce two new formulas -- one purely algebraic, and the other geometric -- to compute the rank of the solution sheaf of a tautological system for CY hyperplanes sections in $X$. The algebraic formula expresses the solution space at any given point (singular or not), as the dual of a certain Lie algebra homology with coefficient in the coordinate ring of $X$ (Theorem \ref{Lie-algebra-homology}.) The geometric formula uses the algebraic result to identify the solution space with the middle de Rham cohomology of the complement of the same CY hyperplane section in $X$.  Based on much numerical evidence, it is conjectured that the geometric result holds for an arbitrary homogeneous variety. Our proof is valid for most familiar cases (e.g. projective spaces, Grassmannians, quadrics, spinor varieties, maximal Lagrangian Grassmannians, two exceptional varieties, full flag varieties $G/B$, and products of such). 

We also use both formulas to find certain degenerate points for which the rank of the solution sheaf is 1. We conjecture that the rank 1 points in Theorem \ref{rank1-points} in fact correspond to large complex structure limits (in the sense of \cite{Morrison}\cite{Gross}), in the moduli space of CY hypersurfaces in $X$.

\begin{conj} (Holonomic rank conjecture) \label{holo-rank}
Let $X$ be an $n$-dimensional projective homogeneous space of a semisimple Lie group $G$. Then the dimension of the solution space of the tautological system $\tau(\hat X,V,G\times\G_m,\beta)$, where $V=\Gamma(X,\omega_X^{-1})^*,~\beta=(0;1)$, at the point $f\in V^*$, coincides with
$$
\dim H^n_{dR}(X-Y_f).
$$
\end{conj}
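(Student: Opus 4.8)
\section*{A proof strategy for the holonomic rank conjecture}

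The plan is to deduce the geometric identity of Conjecture \ref{holo-rank} from the algebraic formula of Theorem \ref{Lie-algebra-homology} by building an explicit chain-level comparison between the complex computing the Lie algebra homology and the algebraic de Rham complex of the affine variety $U:=X-Y_f$. Since $Y_f$ is an anticanonical, hence ample, divisor in the Fano homogeneous space $X$, the complement $U$ is smooth affine of dimension $n$; thus its algebraic de Rham cohomology is computed by global forms, $H^i_{dR}(U)=0$ for $i>n$, and $H^n_{dR}(U)=\Omega^n(U)/d\,\Omega^{n-1}(U)$. The first task is therefore to rewrite both sides --- the solution space $\mathrm{Sol}_f(\cM)$ (equivalently, by Theorem \ref{Lie-algebra-homology}, the dual of a Lie algebra homology of $\fg$ with coefficients in a twist of $\C[\hat X]$) and the top de Rham group $H^n_{dR}(U)$ --- as the cokernel of a single explicit differential, and then to match these differentials.

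For the de Rham side I would organize $\Omega^n(U)$ by order of pole along $Y_f$. The anticanonical condition gives a canonical meromorphic top form $\Omega_0/f$ with a simple pole on $Y_f$, and every global $n$-form on $U$ is a combination of the forms $P\,\Omega_0/f^{k}$ with numerator $P\in\Gamma(X,\omega_X^{-(k-1)})$. Running over all pole orders, the numerators range exactly over the homogeneous coordinate ring $R=\bigoplus_{k\ge0}\Gamma(X,\omega_X^{-k})=\C[\hat X]$, which is precisely the coefficient module appearing in Theorem \ref{Lie-algebra-homology}. This is the geometric source of $\C[\hat X]$: the pole-order filtration of top forms on $U$ is the same graded data as the coordinate ring of the cone, and the Griffiths-type reduction of pole order is what must be matched to the Lie algebra boundary map.

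The comparison itself would use transitivity of the $G$-action. The infinitesimal action gives fundamental vector fields $\xi_x$ on $X$ for $x\in\fg$ and a surjection $\fg\otimes\cO_X\twoheadrightarrow TX$. Via contraction $\iota_{\xi_x}$ together with Cartan's formula $\cL_{\xi_x}=d\,\iota_{\xi_x}+\iota_{\xi_x}\,d$, one obtains a map from the Chevalley--Eilenberg complex of $\fg$ acting on the module of forms-with-poles into the de Rham complex of $U$; the twist $\beta=(0;1)$ and evaluation at $f$ correspond, under this dictionary, to the residue and to the pole-order shift. I would verify that the operators $Z(x)+\beta(x)$ translate into Lie-derivative relations modulo exact forms, and that the quadratic operators $p(\partial_\zeta)$, $p\in I(\hat X)$, translate into the passage from $\Sym V^*$ to $R=\C[\hat X]$; this should identify the relevant Lie algebra homology with (the dual of) $H^n_{dR}(U)$, hence equate their dimensions.

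The main obstacle is showing that this comparison is an isomorphism in the relevant degree for arbitrary homogeneous $X$, not merely a map. The issue is acyclicity of the Koszul-type complex attached to $\fg\otimes\cO_X\twoheadrightarrow TX$: one needs the higher Lie algebra homologies to vanish, equivalently that the numerator module of top forms is resolved by the fundamental vector fields, which amounts to a regular-sequence or Cohen--Macaulay type property of $\hat X$ and of the associated module of forms. For the familiar cases listed --- projective spaces, Grassmannians, quadrics, spinor and Lagrangian Grassmannians, the two exceptional cases, full flags $G/B$, and products --- this exactness can be checked directly, using projective normality of the anticanonical embedding and the known structure of $\hat X$ (e.g.\ its coordinate ring being Cohen--Macaulay). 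In general the required vanishing is not established, which is exactly why Conjecture \ref{holo-rank} remains open beyond these cases; I expect a uniform proof would need either a general Cohen--Macaulayness statement for $\hat X$ or a direct cohomological vanishing for the twisted de Rham complex on $U$.
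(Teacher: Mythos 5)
Your outline follows the paper's route in broad strokes (the algebraic formula of Theorem \ref{Lie-algebra-homology} plus a pole-order/vector-field comparison), but two of your key steps are off. First, you misidentify the hypothesis needed to make the comparison an isomorphism. Since only $H^{Lie}_0$ is being matched with $H^n_{dR}(X-Y_f)$, no acyclicity of a Koszul-type complex and no vanishing of higher Lie algebra homology is required; what is needed is the single degree-zero surjectivity $\fg\otimes\Gamma(X,\omega_X^{-r})\to\Gamma(X,T_X\otimes\omega_X^{-r})$ for all $r\ge0$ (condition (iii) of Corollary \ref{cor3b}), which is exactly what identifies the image of the degree-one Chevalley--Eilenberg differential with the image of $d+df\wedge$ on $(n)$-forms. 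This is not a Cohen--Macaulayness statement about $\hat X$ (that condition appears in the paper only in connection with Kapranov's rank \emph{upper bound}); it is proved case by case in Proposition \ref{prop4b} via Bott's theorem for Hermitian symmetric $X$ and via Borel--Bott--Weil--Demazure for $G/B$, and it is precisely the hypothesis of Theorem \ref{geo.thm}. Your ``regular sequence / CM'' diagnosis would send you looking for the wrong vanishing.

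Second, you treat the identification of the cokernel of the Griffiths-type reduction with $H^n_{dR}(X-Y_f)$ as bookkeeping, but it is a substantive step. The Lie-algebra side lives on the graded ring $R=\C[\hat X]$, with one copy of $\Gamma(X,\omega_X^{-(k-1)})$ for each pole order $k$ and the $e^f$-twisted action, whereas $\Omega^n(X-Y_f)$ is a localization in which the same numerator at different pole orders is identified, and in which exterior differentiation produces $-t\,df\wedge$ rather than $-df\wedge$. The paper bridges this with Dimca's method: the contraction map $\sigma(\omega)=\Delta\omega/f^t$, a factorial rescaling $\mu_{s,t}=1/(t-1)!$ intertwining $d-t\,df$ with $d-df$, and a spectral sequence on the pole-order filtration whose $E_1$-comparison requires $\Gamma(X,\Omega^i_X)=0$ for $i\ge1$ --- true for rational homogeneous $X$, but an input you never invoke. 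Note also that the paper runs the entire comparison on the punctured cone $U=L\setminus\{0\}$ rather than on $X-Y_f$ directly: the extra Euler direction $E$ and the computation $D(\phi E)=N\phi+E(\phi)$ are exactly where the value $\beta=(0;1)$ enters, and semisimplicity of $G$ is what forces $D\circ\rho=0$ so that the $\fg$-part of the action matches $\rho(x)\phi+\phi\rho(x)f$. Your sketch gestures at these points (``residue and pole-order shift'') but supplies no mechanism for them.
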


In this paper, we will prove 
\begin{thm}\label{geo.thm}
Assume that the natural map
$$
\fg\otimes\Gamma(X,\omega_X^{-r})\ra\Gamma(X,T_X\otimes\omega_X^{-r})
$$
is surjective for each $r\geq0$. Then conjecture \ref{holo-rank} holds for all $f\in V^*$. 
\end{thm}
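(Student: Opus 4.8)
The plan is to reconcile the two descriptions of the solution sheaf---the algebraic one furnished by Theorem~\ref{Lie-algebra-homology} and the geometric one appearing in Conjecture~\ref{holo-rank}---by showing that each computes the same ``Jacobian-type'' cokernel, and that the surjectivity hypothesis forces these two cokernels to coincide. Write $R=\bigoplus_{r\ge0}\Gamma(X,\omega_X^{-r})$ for the anticanonical (homogeneous) coordinate ring of $\hat X$, on which $\hat\fg=\fg\oplus\C E$ acts through the $G$-action and the grading (Euler) element $E$. I will use Theorem~\ref{Lie-algebra-homology}, by which the solution space at $f$ is the dual of a Lie algebra homology of $\hat\fg$ with coefficients in $R$, the module structure being twisted by $f$ so that the relevant homology is the cokernel of the \emph{algebraic Jacobian map}
\[
a\colon \fg\otimes R\longrightarrow R,\qquad x\otimes g\longmapsto (x\cdot f)\,g,
\]
which in weight $r$ reads $\fg\otimes\Gamma(X,\omega_X^{-r})\to\Gamma(X,\omega_X^{-(r+1)})$; write $J^{alg}\subset R$ for its image. (The $\beta=(0;1)$ twist, i.e. the element $E$ with $\beta(E)=1$, serves only to select the correct homogeneity weight and plays no role in the image comparison below.)

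On the geometric side, since $Y_f$ is an ample (anticanonical) divisor, $U:=X-Y_f$ is affine, so $H^n_{dR}(U)$ is computed by global algebraic forms, equivalently by the meromorphic de Rham complex on $X$ with poles along $Y_f$. Using the identifications $\Omega^{n-q}_X(kY_f)\cong \Lambda^q T_X\otimes\omega_X^{-(k-1)}$ and the Griffiths-type reduction of pole order, the exterior derivative becomes, on the associated graded of the pole filtration, contraction with $df$ (for $\psi=\iota_\eta\Omega$ one has $df\wedge\psi=\eta(f)\,\Omega$); hence $H^n_{dR}(U)$ is identified with the cokernel of the \emph{geometric Jacobian map}
\[
b\colon \Gamma(X,T_X\otimes\omega_X^{-r})\longrightarrow\Gamma(X,\omega_X^{-(r+1)}),\qquad \eta\longmapsto \iota_{df}\,\eta=\eta(f),
\]
summed over $r\ge0$; write $J^{geo}\subset R$ for its image. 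Validating this identification is the \textbf{main obstacle}: one must know that the pole filtration computes $H^n_{dR}(U)$ and that its spectral sequence degenerates in the relevant range, which for a general homogeneous $X$ rests on Bott--Kodaira type vanishing $H^{>0}(X,\Lambda^q T_X\otimes\omega_X^{-m})=0$; one must also check that the homology of Theorem~\ref{Lie-algebra-homology} is concentrated in the single degree whose cokernel is $\operatorname{coker}(a)$, so that matching the top de Rham group against one cokernel suffices.

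The bridge between the two sides is then where the hypothesis enters. The infinitesimal action $\fg\to\Gamma(X,T_X)$, $x\mapsto\eta_x$, twisted by $\omega_X^{-r}$, is precisely the map $\fg\otimes\Gamma(X,\omega_X^{-r})\to\Gamma(X,T_X\otimes\omega_X^{-r})$ of the statement, and it fits into the commutative triangle factoring $a$ through $b$:
\[
\fg\otimes\Gamma(X,\omega_X^{-r})\longrightarrow\Gamma(X,T_X\otimes\omega_X^{-r})\xrightarrow{\ b\ }\Gamma(X,\omega_X^{-(r+1)}),
\]
whose composite sends $x\otimes g$ to $g\cdot\eta_x(f)=(x\cdot f)g$, i.e. to $a(x\otimes g)$. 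When the first arrow is surjective for every $r\ge0$---exactly the hypothesis---the maps $a$ and $b$ have the same image, so $J^{alg}=J^{geo}$ and therefore $\operatorname{coker}(a)=\operatorname{coker}(b)$ in every weight. Combining with the two formulas gives
\[
\dim\,\mathrm{Sol}_f=\dim\operatorname{coker}(a)=\dim\operatorname{coker}(b)=\dim H^n_{dR}(X-Y_f)
\]
for all $f\in V^*$ (singular or not), which is the assertion of Conjecture~\ref{holo-rank}. I expect the genuinely delicate point to be the second paragraph---the unconditional validity of the geometric pole-order computation on an arbitrary homogeneous $X$---while the comparison in this last paragraph is the formal heart that converts the surjectivity hypothesis into the equality of ranks.
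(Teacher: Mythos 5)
Your overall architecture is the right one---Theorem \ref{Lie-algebra-homology} on the algebraic side, a de Rham computation on the complement on the other, and the surjectivity hypothesis used to factor $\fg\otimes\Gamma(X,\omega_X^{-r})\to\Gamma(X,T_X\otimes\omega_X^{-r})\to\Gamma(X,\omega_X^{-(r+1)})$ and conclude that two maps have the same image; that triangle is exactly the paper's diagram \eqref{17b}. But there is a genuine gap in how you set up the two ``Jacobian maps.'' The $\hat\fg$-action in Theorem \ref{Lie-algebra-homology} is $\phi\mapsto Z^*(x)\phi+\phi\,Z^*(x)f-\beta(x)\phi$: a first-order operator containing the derivative term $Z^*(x)\phi$ as well as multiplication by $x\cdot f$. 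Your map $a(x\otimes g)=(x\cdot f)g$ keeps only the multiplication (degree $+1$) part; indeed the true map is not homogeneous of degree $+1$, which is a sign that something has been discarded. Likewise, the geometric cokernel \eqref{16b} involves $d+df\wedge$ on $\Gamma(U,\Omega^n_U)$, not contraction with $df$ alone. What you are comparing are the symbols (associated graded pieces for the pole filtration) of the correct maps. Passing from $\operatorname{coker}(d+df\wedge)$ to $\operatorname{coker}(df\wedge)$ requires degeneration of the pole-order spectral sequence; that holds for smooth $f$ on $\P^n$ by Griffiths-type arguments, but it fails precisely for singular $f$---and the theorem is asserted for \emph{all} $f\in V^*$. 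At a point such as the section of Theorem \ref{rank1-points}, the Jacobian-ideal cokernel $\operatorname{coker}(b)$ is large while the true solution space is one-dimensional, so your chain of equalities breaks there.

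The paper avoids this by never passing to the symbol: Lemma \ref{ghat-action} shows the composite $(\fg\oplus\C\cdot E)\otimes\sO_U\to T_U\xrightarrow{D+i_{df}}\sO_U$ reproduces the \emph{full} action $x\otimes\phi\mapsto\rho(x)\phi+\phi\rho(x)f$ (using $D\circ\rho=0$, which comes from semisimplicity, to control the extra term), so Corollary \ref{cor3b} identifies $H^{Lie}_0(\hat\fg,R_f)$ with $\operatorname{coker}(d+df)$ directly; and Section 5 proves $H^*(B^*_0,D_f)\cong H^*_{dR}(W)$ for arbitrary $f$ by exhibiting a map of spectral sequences that is already an isomorphism on $E_1$---no degeneration statement and no vanishing of $H^{>0}(X,\Lambda^qT_X\otimes\omega_X^{-m})$ is invoked, only $\Gamma(X,\Omega^{\ge1}_X)=0$. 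To repair your argument you would need to keep the derivative terms on both sides throughout (your bridging triangle works verbatim with $D+i_{df}$ in place of $i_{df}$) and replace the associated-graded identification of $H^n_{dR}(X-Y_f)$ with a comparison of the full twisted complexes.
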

The list of homogeneous spaces known to satisfy the condition in the theorem includes Grassmannians, full flag varieties $G/B$, quadrics, spinor varieties, maximal Lagrangian Grassmannians, and two exceptional $X$'s, as well as products of such see Proposition \ref{prop4b}. 
As one immediate consequence, we also deduce the following

\begin{cor} \label{HLY-conjecture}\cite{HLY1994}
For $X=\P^n$, the tautological system 
$$\tau(\hat X,\Gamma(X,\omega_X^{-1})^*,SL_{n+1}\times\G_m,(0;1))$$ 
(which is a special case of an extended GKZ system) is complete. In other words, the solutions at a generic point $f\in V^*$ are precisely the period integrals of CY hypersurfaces in $X$. 
\end{cor}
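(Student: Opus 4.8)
The plan is to derive Corollary \ref{HLY-conjecture} as a direct specialization of Theorem \ref{geo.thm}, so the work splits into two independent tasks: verifying that $X=\P^n$ meets the hypothesis of Theorem \ref{geo.thm}, and then identifying the quantity $\dim H^n_{dR}(X-Y_f)$ at a generic point with the rank of the period sheaf. First I would check the surjectivity hypothesis for $X=\P^n$ with $G=SL_{n+1}$. Here $\omega_X^{-1}=\sO(n+1)$, so $\omega_X^{-r}=\sO(r(n+1))$ and $T_X$ fits into the Euler sequence $0\to\sO\to\sO(1)^{\oplus(n+1)}\to T_X\to0$. The Lie algebra $\fg=\mathfrak{sl}_{n+1}$ maps to global vector fields, and the required map $\fg\otimes\Gamma(X,\sO(r(n+1)))\to\Gamma(X,T_X\otimes\sO(r(n+1)))$ is the natural action map. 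Since $H^1(X,\sO(r(n+1)))=0$ for all $r\geq0$, twisting the Euler sequence shows $\Gamma(X,T_X\otimes\sO(r(n+1)))$ is a quotient of $\Gamma(X,\sO(1)\otimes\sO(r(n+1)))^{\oplus(n+1)}$, and surjectivity follows from the transitivity of the $SL_{n+1}$-action together with a standard representation-theoretic computation identifying both sides as explicit $\mathfrak{sl}_{n+1}$-modules; alternatively I would invoke Proposition \ref{prop4b}, which already lists $\P^n$ (as a Grassmannian and as $G/B$ in the relevant small cases) among the spaces satisfying the condition. This immediately yields that Conjecture \ref{holo-rank} holds for $X=\P^n$ at every $f$.

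Having established the holonomic rank conjecture for $\P^n$, the second task is to reconcile the statement ``the solution space equals $H^n_{dR}(X-Y_f)$'' with the statement ``the solutions are precisely period integrals.'' Here I would invoke the geometric content underlying the conjecture: by Theorem \ref{LY-theorem} the period sheaf $\Pi$ is a subsheaf of the solution sheaf, and by Proposition \ref{period-rank} its rank equals the dimension of the middle \emph{vanishing} cohomology of the smooth hypersurface $Y_f$. The key step is therefore to compare, at a generic $f\in V^*$, the full de Rham cohomology $H^n_{dR}(X-Y_f)$ of the complement against the vanishing cohomology of $Y_f$. The plan is to use the residue/Gysin long exact sequence relating $H^\bullet(X-Y_f)$, $H^\bullet(X)$, and the primitive part of $H^{\bullet-1}(Y_f)$: for a smooth ample hypersurface the residue map identifies $H^n(X-Y_f)$ with the cokernel of the restriction $H^n(X)\to H^n(Y_f)$, which by the Lefschetz hyperplane theorem and the hard Lefschetz decomposition is exactly the primitive (vanishing) part of $H^{n-1}(Y_f)$ up to the contributions of $H^\bullet(X)$ itself. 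For $X=\P^n$ the cohomology of the ambient space is one-dimensional in each even degree, and tracking these classes shows that $\dim H^n_{dR}(\P^n-Y_f)$ coincides with the rank of $\Pi$ at generic $f$.

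Putting these together, the solution space at generic $f$ has dimension equal to $\dim H^n_{dR}(\P^n - Y_f)$ (by Theorem \ref{geo.thm}), which equals the rank of the period sheaf (by the cohomological comparison just described). Since $\Pi$ is already a subsheaf of the solution sheaf of equal rank, the inclusion must be an equality of local systems on the smooth locus, which is precisely the assertion that the system $\tau(\hat X,\Gamma(X,\omega_X^{-1})^*,SL_{n+1}\times\G_m,(0;1))$ is complete. I expect the main obstacle to be the cohomological comparison in the second task rather than the surjectivity check: one must carefully account for the ``extra'' ambient classes coming from $H^\bullet(\P^n)$ in the Gysin sequence and confirm that after passing to a generic member of the anticanonical family these do not inflate the dimension beyond the vanishing cohomology. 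Controlling this precisely — and in particular verifying that the residue map is surjective onto the vanishing cohomology with no unexpected kernel at a \emph{generic} point — is the delicate point, though for $\P^n$ it is classical and can be pinned down by an explicit Jacobian-ring (Griffiths–Dwork) description of $H^n_{dR}(\P^n-Y_f)$.
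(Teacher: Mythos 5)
Your proposal is correct and follows essentially the same route as the paper's own proof in the section ``Solution sheaf vs.\ period sheaf'': one invokes Proposition \ref{prop4b} to verify the surjectivity hypothesis of Theorem \ref{geo.thm} for $\P^n$, and then uses the exact sequence $0\to H^n(X)_{prim}\to H^n_{dR}(W)\to H^{n-1}(Y_f)_{van}\to 0$ together with Proposition \ref{period-rank} and the obvious vanishing $H^n(\P^n)_{prim}=0$ to conclude completeness. (The paper also gives an independent, purely algebraic proof in Section 2 via the Fermat point and a Gauss-sum count, but your geometric argument matches the paper's official deduction of the corollary.)
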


More generally, we will show as a corollary of Conjecture \ref{holo-rank} that the question of completeness for a given $X$ can be reduced to the vanishing of the middle primitive cohomology of $X$, which is essentially topological.

\section{Solution sheaf and Lie algebra homology}

We begin with the set up in \cite{LY} and consider the rank of the solution sheaf to the tautological system $\tau(\hat X,V,\hat G,\beta)$. 
We have a holomorphic representation
$$
Z:\hat\fg\ra \End V
$$
and its contragredient dual representation 
$$
Z^*:\hat\fg\ra \End V^*.
$$

Since $\End V\subset \Der(\Sym V)=\Der \C[V^*]$ and $\End V^*\subset\Der(\Sym V^*)=\Der\C[V]$, we can view for $x\in\hat\fg$,
$$
Z(x)\in\Der\C[V^*],~~~~Z^*(x)\in\Der\C[V].
$$
Thus by fixing a basis $a_i$ for $V$ and dual basis $a^*_i$ for $V^*$, we can write 
$$
\C[V^*]=\C[a],~~~\C[V]=\C[a^*]
$$
and
$$
Z(x)=\sum_{i,j}x_{ji}a_j{\partial\over\partial a_i},~~~Z^*(x)=-\sum_{i,j}x_{ij}a^*_j{\partial\over\partial a^*_i}.
$$
Put
$$
Z_\beta(x)=Z(x)+\beta(x) ~~~(x\in\hat\fg.)
$$

\begin{defn} \label{redefine-D-module}
Let $\cD=\CC[a][\partial_1,\partial_2,...]$ be the Weyl algebra on $V^*$, where $\partial_i={\partial\over\partial a_i}$, and consider the linear  isomorphism 
$$
\Phi:\cD\ra \CC[V\times V^*]= \C[a,a^*], ~~~\sum_u g_u(a)\partial^u\mapsto\sum_u g_u(a){a^*}^u.
$$
\comment{We can view $\Phi$ as a map from $\cD$ to the associated graded $gr(\cD)$ under the order filtration.}
Let $\Psi:\cD\ra\End\C[a,a^*]$ be the $\cD$-module structure induced by $\Phi$, i.e.
$$
\Psi(Q)\cdot q=\Phi(Q\cdot\Phi^{-1}(q)),~~~(Q\in\cD,~q\in\C[a,a^*].)
$$
\end{defn}

Observe that on the variables $\partial_i$, $\Phi$ is precisely the inverse of the Fourier transform we used to define the tautological system $\tau(\hat X,V,\hat G,\beta)$ in \cite{LSY}\cite{LY}. Cf. Eqn. (4.5) \cite{Adolphson}.

Next, it is straightforward to check

\begin{lem}\label{Psi}
We have $\Psi(a_i)=a_i$ (acting by left multiplication), and $\Psi(\partial_i)={\partial\over\partial a_i}+a^*_i$.
Let $I=I(\hat X,V)\subset\C[a^*]=\C[V]$ be the vanishing ideal of $X$ in $\PP V$. Then the ideal $\C[a]I$ of the ring $\CC[a,a^*]$ is a $\cD$-submodule of $\CC[a,a^*]$ under the action $\Psi$.
\end{lem}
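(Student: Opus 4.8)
The plan is to compute the two operators $\Psi(a_i)$ and $\Psi(\partial_i)$ directly from the defining formula $\Psi(Q)\cdot q = \Phi(Q\cdot\Phi^{-1}(q))$, and then to read off stability of $\C[a]I$ from these explicit expressions. Throughout I would represent a general element $q\in\C[a,a^*]$ in the form $q=\sum_u g_u(a)(a^*)^u$, so that $\Phi^{-1}(q)=\sum_u g_u(a)\partial^u$ is the corresponding element of $\cD$ written with polynomial coefficients on the left.

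For $\Psi(a_i)$ the computation is immediate: since $a_i$ commutes with every $g_u(a)$ in the Weyl algebra $\cD$, left multiplication gives $a_i\cdot\Phi^{-1}(q)=\sum_u a_i g_u(a)\partial^u$, already in standard form, and applying $\Phi$ returns $\sum_u a_i g_u(a)(a^*)^u=a_i q$. Hence $\Psi(a_i)$ is left multiplication by $a_i$. For $\Psi(\partial_i)$ the only point requiring care is the Weyl-algebra commutation $\partial_i g_u(a)=g_u(a)\partial_i+(\partial g_u/\partial a_i)(a)$, which I would use to rewrite $\partial_i\cdot\sum_u g_u(a)\partial^u$ as the sum of a shifted term $\sum_u g_u(a)\partial^{u+e_i}$ and a commutator term $\sum_u (\partial g_u/\partial a_i)\partial^u$. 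Applying $\Phi$, the shifted term becomes $a^*_i q$ (because $\partial^{u+e_i}\mapsto a^*_i(a^*)^u$) and the commutator term becomes $\partial q/\partial a_i$, giving $\Psi(\partial_i)=\partial/\partial a_i+a^*_i$. This single commutation step carries essentially all the content of the formulas; everything else is bookkeeping about writing elements of $\cD$ in the coefficients-on-left normal form before applying $\Phi$.

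For the submodule statement, since $\cD$ is generated as a $\C$-algebra by the $a_i$ and $\partial_i$ and $\Psi$ is a ring homomorphism, it suffices to check that $\C[a]I$ is preserved by the two generators $\Psi(a_i)$ and $\Psi(\partial_i)$. Writing a typical element of $\C[a]I$ as $\sum_j p_j(a)h_j(a^*)$ with $p_j\in\C[a]$ and $h_j\in I$, I would observe: multiplication by $a_i$ changes only the $\C[a]$-coefficients and leaves each $h_j$ in $I$; multiplication by $a^*_i$ replaces each $h_j$ by $a^*_i h_j$, which lies in $I$ because $I\subset\C[a^*]$ is an ideal; and $\partial/\partial a_i$ differentiates only the coefficients $p_j(a)$, again leaving the $I$-factor untouched. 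Consequently both $\Psi(a_i)$ (multiplication by $a_i$) and $\Psi(\partial_i)=\partial/\partial a_i+a^*_i$ map $\C[a]I$ into itself, so $\C[a]I$ is a $\cD$-submodule.

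I do not expect any genuine obstacle: the whole lemma is a direct verification, which is why the authors label it ``straightforward to check.'' The only place demanding attention is the commutator bookkeeping in the computation of $\Psi(\partial_i)$, where one must correctly separate the term that produces the ordinary derivative $\partial/\partial a_i$ from the term that produces multiplication by $a^*_i$; once these two operators are in hand, the closure of $\C[a]I$ under the action is transparent, using only that $I$ is an ideal of $\C[a^*]$ and that $\C[a]I$ is closed under multiplication by $\C[a]$.
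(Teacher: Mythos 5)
Your verification is correct and is exactly the ``straightforward check'' the paper alludes to without writing out: the single Weyl-algebra commutation $\partial_i g_u(a)=g_u(a)\partial_i+\partial g_u/\partial a_i$ yields $\Psi(\partial_i)=\partial/\partial a_i+a^*_i$, and stability of $\C[a]I$ under the generators $a_i$, $\partial/\partial a_i+a^*_i$ follows since $I$ is an ideal of $\C[a^*]$ and $\Psi$ is a ring homomorphism. Nothing is missing.
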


\begin{lem} \label{1st-sub}
The image under $\Phi$ of $\cD\Phi^{-1}(I)$ is $\CC[a]I$. In particular, $\Phi$ induces a $\cD$-module isomorphism
$$
\cD/\cD\Phi^{-1}(I)\cong R[a]
$$
where $R=R_V:=\CC[V]/I(\hat X,V)=\CC[a^*]/I$.
\end{lem}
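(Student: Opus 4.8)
The plan is to exploit the fact that, although $\Phi$ is introduced in Definition \ref{redefine-D-module} merely as a linear isomorphism, the action $\Psi$ is engineered precisely so that $\Phi$ becomes an isomorphism of \emph{left} $\cD$-modules. Concretely, setting $q=\Phi(1)=1$ in the defining relation $\Psi(Q)\cdot q=\Phi(Q\cdot\Phi^{-1}(q))$ gives the identity $\Phi(Q)=\Psi(Q)\cdot 1$ for every $Q\in\cD$, while the same relation shows $\Psi$ is an algebra homomorphism, whence $\Phi(PQ)=\Psi(PQ)\cdot 1=\Psi(P)\cdot\bigl(\Psi(Q)\cdot 1\bigr)=\Psi(P)\cdot\Phi(Q)$. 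Thus $\Phi$ is a $\cD$-linear bijection from $\cD$ (acting on itself by left multiplication) onto $\CC[a,a^*]$ equipped with the action $\Psi$. Once this is in hand, the whole statement reduces to identifying the image of a single submodule and then passing to quotients.

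First I would compute $\Phi(\cD\,\Phi^{-1}(I))$. Every $p\in I\subset\CC[a^*]$ has constant coefficients as a polynomial in $a^*$, so $\Phi^{-1}(p)=p(\partial)$ and hence $\Phi(p(\partial))=p$; that is, $\Phi$ carries the generators $\Phi^{-1}(I)$ back to $I\subset\CC[a^*]\subset\CC[a,a^*]$. Using the $\cD$-linearity just established, the image of the left ideal $\cD\,\Phi^{-1}(I)$ is exactly the $\cD$-submodule $\Psi(\cD)\cdot I$ generated by $I$. It then remains to show $\Psi(\cD)\cdot I=\CC[a]I$. The inclusion $\subseteq$ is immediate from Lemma \ref{Psi}, since $\CC[a]I$ is a $\cD$-submodule of $\CC[a,a^*]$ that contains $I$. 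For $\supseteq$ I would again invoke Lemma \ref{Psi}, specifically $\Psi(a_i)=a_i$ (left multiplication): applying the monomials $\Psi(a^v)=a^v$ to elements $p\in I$ produces all products $a^v p$, and these span $\CC[a]I$. This yields the first assertion, $\Phi(\cD\,\Phi^{-1}(I))=\CC[a]I$.

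For the isomorphism, since $\Phi$ is a $\cD$-linear bijection carrying the submodule $\cD\,\Phi^{-1}(I)$ onto $\CC[a]I$, it descends to a $\cD$-module isomorphism $\cD/\cD\,\Phi^{-1}(I)\cong\CC[a,a^*]/\CC[a]I$. Finally I would identify the target: writing $\CC[a,a^*]=\CC[a]\otimes_\CC\CC[a^*]$, the submodule $\CC[a]I$ is precisely $\CC[a]\otimes I$, so the quotient is $\CC[a]\otimes(\CC[a^*]/I)=\CC[a]\otimes R=R[a]$, carrying the transported $\cD$-action in which $a_i$ acts by multiplication and $\partial_i$ by $\partial/\partial a_i+a^*_i$, the latter through the image of $a^*_i$ in $R$.

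The bulk of this argument is formal bookkeeping; the one point that deserves care is keeping the two $\cD$-module structures — left multiplication on $\cD$ versus the action $\Psi$ on $\CC[a,a^*]$ — rigorously distinct, and verifying that $\Phi$ intertwines them, which is exactly what upgrades the evident linear isomorphism to a $\cD$-linear one. The only genuine computation is the double inclusion $\Psi(\cD)\cdot I=\CC[a]I$, and even there the nontrivial direction collapses to the single fact $\Psi(a_i)=a_i$ from Lemma \ref{Psi}. I anticipate no serious obstacle beyond maintaining this discipline.
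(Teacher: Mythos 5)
Your proof is correct and follows essentially the same route as the paper: identify the image of the left ideal $\cD\Phi^{-1}(I)$ under $\Phi$ as $\CC[a]I$ and then pass to quotients using the transported $\cD$-module structure. The only cosmetic difference is that the paper computes $\Phi(\cD\Phi^{-1}(I))=\CC[a,a^*]I=\CC[a]I$ in one line by noting $\Phi^{-1}(I)\subset\C[\partial_1,\partial_2,\dots]$ (so left multiplication keeps operators in normal form), whereas you obtain the same identity more structurally via $\Psi$ and the two inclusions resting on Lemma \ref{Psi}.
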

\begin{proof}
Since $\Phi^{-1}(I)\subset\C[\partial_1,\partial_2,..]$, we have $\Phi(\cD\Phi^{-1}(I))=\CC[a,a^*]I$.
The right side is $\C[a]I$, since $I$ is an ideal in $\C[a^*]$.
\end{proof}

Put 
$$\f=\sum_i a_ia^*_i\in\CC[V\times V^*]=\CC[a,a^*]$$
which is the ``generic'' hyperplane section under the embedding $X\subset\PP V$. 
Then by a straightforward calculation, we find that

\begin{lem}\label{Z-star-f-beta}
The map $Z^*_{\f,\beta}:\hat\fg\ra\End \CC[a,a^*]$ given by
$$
x\mapsto Z^*_{\f,\beta}(x)=Z^*(x)+(Z^*(x)\f)-\beta(x)~~(x\in\hat\fg)
$$
is a Lie algebra homomorphism. (Here $(Z^*(x)\f)$ means the operator ``multiplication by $Z^*(x)\f$.'' This is not the same as the composition of the two operators $Z^*(x)$ and multiplication by $\f$.) 
\end{lem}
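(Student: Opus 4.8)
The plan is to verify directly that $Z^*_{\f,\beta}$ respects brackets, i.e. that $[Z^*_{\f,\beta}(x),Z^*_{\f,\beta}(y)]=Z^*_{\f,\beta}([x,y])$ for all $x,y\in\hat\fg$, where the left-hand bracket is the commutator of operators in $\End\CC[a,a^*]$. Writing $A=Z^*(x)$ and $B=Z^*(y)$ for the derivations (which, by the explicit formula, differentiate only the $a^*$ variables and so extend to derivations of $\CC[a,a^*]$), and writing $(\phi),(\psi)$ for multiplication by the polynomials $\phi=Z^*(x)\f$ and $\psi=Z^*(y)\f$, the operator is $Z^*_{\f,\beta}(x)=A+(\phi)-\beta(x)$. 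Since the scalars $\beta(x),\beta(y)$ are central in $\End\CC[a,a^*]$, they drop out of the commutator, so it suffices to expand
$$
[A+(\phi),\,B+(\psi)]=[A,B]+[A,(\psi)]+[(\phi),B]+[(\phi),(\psi)].
$$

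Then I would evaluate each of the four terms. First, because $Z^*$ is the contragredient representation and hence itself a Lie algebra homomorphism, $[A,B]=Z^*([x,y])$. Second, the key computational input is the elementary identity $[D,(g)]=(Dg)$, valid for any derivation $D$ and any multiplication operator $(g)$, since applied to a test element $h$ it reads $D(gh)-gD(h)=(Dg)h$. Using this, $[A,(\psi)]=(A\psi)=(AB\f)$ and $[(\phi),B]=-(B\phi)=-(BA\f)$, so the two cross terms combine to $((AB-BA)\f)=([A,B]\f)=(Z^*([x,y])\f)$. Third, multiplication operators commute, so $[(\phi),(\psi)]=0$. Collecting, the commutator equals $Z^*([x,y])+(Z^*([x,y])\f)$.

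Finally I would compare this with $Z^*_{\f,\beta}([x,y])=Z^*([x,y])+(Z^*([x,y])\f)-\beta([x,y])$. Since $\beta$ is a Lie algebra homomorphism into the abelian Lie algebra $\C$, we have $\beta([x,y])=0$, so the two expressions agree and the claim follows.

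There is no serious obstacle here; the statement is a clean verification. The only point demanding care is the one the lemma itself flags: one must consistently distinguish the derivation $Z^*(x)$, which sends $\f$ to the polynomial $Z^*(x)\f$, from the order-zero operator of multiplication by that polynomial. Tracking this distinction is exactly what makes the two cross terms collapse into the single multiplication operator $(Z^*([x,y])\f)$ and thereby match the bracket computed on the $\hat\fg$ side. The only structural inputs I rely on — that $Z^*$ is a representation and that $\beta$ annihilates $[\hat\fg,\hat\fg]$ — are built into the hypotheses.
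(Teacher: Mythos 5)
Your verification is correct and is precisely the ``straightforward calculation'' the paper invokes without writing out: the paper offers no proof of this lemma beyond that phrase, and your expansion of the commutator — using $[D,(g)]=(Dg)$ for a derivation $D$, the vanishing of $[(\phi),(\psi)]$, and $\beta([x,y])=0$ since $\beta$ maps into the abelian Lie algebra $\C$ — is exactly the intended argument. No gaps.
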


As we shall see later, in the case when $\hat\fg$ is a direct sum of Lie algebras $\fg\oplus\C$, the choice $\beta=(0;1)$ and $Z^*(1)$ being the negative Euler operator on $\CC[a^*]$ will be important for computing the holonomic rank using the method of Feynman measure. Note further that the lemma also holds true if we replace $\f$ by a {\it fixed} section $f=\sum_ia^{(0)}_ia^*_i\in V^*$ and $\CC[a,a^*]$ by $\CC[a^*]$ (i.e. evaluate the $a_i$ at $a_i=a^{(0)}_i\in\C$), since the derivations $Z^*(x)\in\Der\C[a^*]$ do not affect the variables $a_i$ in the calculation leading to Lemma \ref{Z-star-f-beta}.

\begin{lem}\label{g-D-commute}
For $x\in\hat\fg$, $Z^*_{\f,\beta}(x)\in\End_\cD\CC[a,a^*]$. In other words, the $\hat\fg$-action and the $\cD$-action on $\C[a,a^*]$ commute.
\end{lem}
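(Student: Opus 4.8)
The plan is to reduce the assertion to a check on algebra generators and then to a short commutator computation in which the role of the correction term $(Z^*(x)\f)$ becomes transparent. An operator on $\C[a,a^*]$ lies in $\End_\cD\CC[a,a^*]$ exactly when it commutes with the action $\Psi(Q)$ of every $Q\in\cD$, and since $\cD=\CC[a][\partial_1,\partial_2,\dots]$ is generated as an algebra by the multiplication operators $a_i$ and the $\partial_i$, it suffices to verify that $Z^*_{\f,\beta}(x)$ commutes with $\Psi(a_i)=a_i$ and with $\Psi(\partial_i)=\frac{\partial}{\partial a_i}+a^*_i$ for every $i$ (by Lemma \ref{Psi}). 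Throughout I will use the elementary identity $[D,(g)]=(Dg)$, valid for any derivation $D$ and any multiplication operator $(g)$, where $(Dg)$ denotes multiplication by the function $Dg$.

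The commutation with $a_i$ is immediate. Writing $Z^*_{\f,\beta}(x)=Z^*(x)+(Z^*(x)\f)-\beta(x)$, the derivation $Z^*(x)=-\sum_{k,l}x_{kl}a^*_l\frac{\partial}{\partial a^*_k}$ differentiates only in the $a^*$ variables, hence commutes with multiplication by the $a$-variable $a_i$; the middle term $(Z^*(x)\f)$ is itself a multiplication operator and $\beta(x)$ is a scalar, so both commute with $a_i$ as well. Thus $[Z^*_{\f,\beta}(x),a_i]=0$.

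The substance is the commutation with $\Psi(\partial_i)$. I would expand $[Z^*_{\f,\beta}(x),\frac{\partial}{\partial a_i}+a^*_i]$ term by term. For the first summand, $[Z^*(x),\frac{\partial}{\partial a_i}]=0$ because the variables are disjoint, while $[Z^*(x),a^*_i]=(Z^*(x)a^*_i)$ is multiplication by the linear form $Z^*(x)a^*_i$. For the middle summand, $[(Z^*(x)\f),a^*_i]=0$ since both are multiplications, while $[(Z^*(x)\f),\frac{\partial}{\partial a_i}]=-\bigl(\frac{\partial}{\partial a_i}(Z^*(x)\f)\bigr)$. The scalar $\beta(x)$ drops out. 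Collecting, the bracket equals the multiplication operator $(Z^*(x)a^*_i)-\bigl(\frac{\partial}{\partial a_i}(Z^*(x)\f)\bigr)$.

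The crux is that these two multiplication terms cancel, and this is precisely where the special shape of $\f=\sum_k a_k a^*_k$ enters. Because $Z^*(x)$ acts trivially on the $a$-variables, $Z^*(x)\f=\sum_k a_k\,(Z^*(x)a^*_k)$, and since each $Z^*(x)a^*_k$ is a function of the $a^*$-variables alone, differentiating in $a_i$ singles out the $k=i$ term: $\frac{\partial}{\partial a_i}(Z^*(x)\f)=Z^*(x)a^*_i$. Hence the two contributions cancel identically and $[Z^*_{\f,\beta}(x),\Psi(\partial_i)]=0$. I do not expect a genuine obstacle here; the only points to get right are the signs and the observation — which is exactly the design principle behind adding $(Z^*(x)\f)$ to $Z^*(x)$ — that the extra term produced by commuting $Z^*(x)$ past $a^*_i$ is compensated by the term produced by commuting the multiplication operator $(Z^*(x)\f)$ past $\frac{\partial}{\partial a_i}$.
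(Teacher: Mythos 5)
Your proof is correct and follows the same route as the paper: reduce to the generators $a_i$ and $\Psi(\partial_i)=\frac{\partial}{\partial a_i}+a^*_i$ via Lemma \ref{Psi} and check that both commutators vanish. The paper simply asserts the identity $[Z^*_{\f,\beta}(x),\frac{\partial}{\partial a_i}+a^*_i]=0$ without computation, whereas you supply the verification, including the key cancellation $\frac{\partial}{\partial a_i}(Z^*(x)\f)=Z^*(x)a^*_i$ that explains why the correction term $(Z^*(x)\f)$ is there; this is a faithful filling-in of the paper's argument rather than a different one.
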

\begin{proof}
It is obvious that $[Z^*_{\f,\beta}(x),a_i]=0$. We also have
$$
[Z^*_{\f,\beta}(x),{\partial\over\partial a_i}+a^*_i]=0.
$$
By Lemma \ref{Psi}, it follows that $[Z^*_{\f,\beta}(x),\Psi(\cD)]=0$, i.e. the $\hat\fg$-action and the $\cD$-action on $\CC[a,a^*]$ commute.
\end{proof}

\begin{lem} \label{action-on-Ra}
The $\cD$-submodule $\C[a]I\subset\C[a,a^*]$ is also a $\hat\fg$-submodule, where $\hat\fg$ acts via the operators $Z^*_{\f,\beta}(x)$. Hence $\hat\fg$ acts on the quotient $R[a]=\C[a,a^*]/\C[a]I$.
\end{lem}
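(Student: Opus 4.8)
The plan is to verify the two required invariance statements—that $\C[a]I$ is stable under each $Z^*_{\f,\beta}(x)$, and that consequently the quotient $R[a]$ inherits a well-defined $\hat\fg$-action. First I would recall from Lemma \ref{Psi} that $\C[a]I$ is already a $\cD$-submodule, and from Lemma \ref{g-D-commute} that the operators $Z^*_{\f,\beta}(x)$ commute with the $\cD$-action. The $\cD$-submodule structure is what makes the quotient $R[a]=\C[a,a^*]/\C[a]I$ a $\cD$-module; what remains is only the $\hat\fg$-part.

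The key step is to show $Z^*_{\f,\beta}(x)\cdot\C[a]I\subseteq\C[a]I$ for all $x\in\hat\fg$. Since $Z^*_{\f,\beta}(x)=Z^*(x)+(Z^*(x)\f)-\beta(x)$, and the last two terms act by multiplication by elements of $\C[a,a^*]$, the multiplication terms obviously preserve the ideal $\C[a]I$. So the content reduces to showing that the derivation $Z^*(x)$ preserves $\C[a]I$. Now $Z^*(x)=-\sum_{i,j}x_{ij}a^*_j{\partial\over\partial a^*_i}$ is a derivation in the $a^*$-variables only, acting trivially on the $a_i$; hence it suffices to check that $Z^*(x)$ maps $I$ into $\C[a^*]I=I$, i.e. that $I=I(\hat X,V)$ is stable under $Z^*(x)$. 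This is exactly where I expect the main (though mild) obstacle to lie: one must use that $\hat X\subset V$ is $\hat G$-stable, so its defining ideal $I$ is preserved by the infinitesimal action, which is precisely the derivation $Z^*(x)$. Concretely, for $p\in I$ vanishing on $\hat X$, the function $Z^*(x)p$ is (up to sign) the derivative of $p$ along the one-parameter subgroup generated by $x$, and this again vanishes on $\hat X$ by $\hat G$-invariance; thus $Z^*(x)I\subseteq I$.

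Granting $Z^*(x)I\subseteq I$, one concludes $Z^*(x)\,\C[a]I\subseteq\C[a]\,Z^*(x)I\subseteq\C[a]I$ since $Z^*(x)$ leaves the $a_i$ fixed and acts as a derivation. Combining with the two multiplication terms gives $Z^*_{\f,\beta}(x)\,\C[a]I\subseteq\C[a]I$, establishing that $\C[a]I$ is a $\hat\fg$-submodule. The final assertion—that $\hat\fg$ acts on $R[a]$—is then immediate: a Lie algebra action on a module that stabilizes a submodule descends to the quotient, and the Lie algebra homomorphism property of $x\mapsto Z^*_{\f,\beta}(x)$ was already supplied by Lemma \ref{Z-star-f-beta}. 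I would also note that, by the remark following Lemma \ref{Z-star-f-beta}, the identical argument applies verbatim after evaluating the $a_i$ at a fixed section $f\in V^*$, which is the version actually needed downstream for the holonomic rank computation.
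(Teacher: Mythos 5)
Your proposal is correct and follows essentially the same route as the paper's proof: the multiplication terms $(Z^*(x)\f)-\beta(x)$ preserve the ideal $\C[a]I$ trivially, and the derivation term preserves it because $Z^*(x)I\subseteq I$, which follows from the $\hat G$-stability of $\hat X$. Your additional remarks (that $Z^*(x)$ acts only in the $a^*$-variables, and that the quotient action is well defined via Lemma \ref{Z-star-f-beta}) merely make explicit what the paper leaves implicit.
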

\begin{proof}
Since $I\subset\C[a^*]=\C[V]$ is the vanishing ideal of the $\hat G$ invariant subvariety $\hat X\subset V$, and since the Lie algebra $\hat\fg$ of $\hat G$ acts on $\C[V]$ by $Z^*:\hat\fg\ra\Der \C[V]$, it follows that $Z^*(x) I\subset I$. Since the $Z^*(x)\f$ acts on $\C[V\times V^*]=\C[a,a^*]$ by left multiplication, they also leave $\C[a]I$ stable. It follows that the $Z^*_{\f,\beta}(x)=Z^*(x)+Z^*(x)\f-\beta(x)$ leave $\C[a]I$ stable.
\end{proof}

\begin{lem} \label{2nd-sub}
For $x\in\hat\fg$, $\Phi Z_\beta(x)=-Z^*_{\f,\beta}(x)\cdot 1$. Moreover, the image under $\Phi$ of $\cD Z_\beta(\hat\fg)$ is $Z^*_{\f,\beta}(\hat \fg)\cdot\C[a,a^*]$.
\end{lem}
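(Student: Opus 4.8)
The plan is to prove the two assertions in turn, using the first (a pointwise identity on generators) to bootstrap to the statement about the whole left ideal.

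First I would establish $\Phi Z_\beta(x)=-Z^*_{\f,\beta}(x)\cdot 1$ by direct computation on both sides. Writing $Z(x)=\sum_{i,j}x_{ji}a_j\partial_i$ and applying $\Phi$ termwise (replacing each $\partial_i$ by $a^*_i$), together with $\Phi(\beta(x))=\beta(x)$, gives $\Phi Z_\beta(x)=\sum_{i,j}x_{ji}a_ja^*_i+\beta(x)$. On the other side, since $Z^*(x)$ is a derivation it kills the constant $1$, while the multiplication operator $(Z^*(x)\f)$ sends $1$ to $Z^*(x)\f$; a short computation with $\f=\sum_k a_ka^*_k$ and $Z^*(x)=-\sum_{i,j}x_{ij}a^*_j\frac{\partial}{\partial a^*_i}$ yields $Z^*(x)\f=-\sum_{i,j}x_{ij}a_ia^*_j$. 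Hence $Z^*_{\f,\beta}(x)\cdot 1=-\sum_{i,j}x_{ij}a_ia^*_j-\beta(x)$, and after relabeling $i\leftrightarrow j$ this matches $-\Phi Z_\beta(x)$, as desired.

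For the second assertion, the key is the identity $\Phi(QP)=\Psi(Q)\cdot\Phi(P)$ for $Q,P\in\cD$, which is just the definition of $\Psi$ applied to $q=\Phi(P)$. Taking $P=Z_\beta(x)$ and using the first part, I would write $\Phi(QZ_\beta(x))=\Psi(Q)\cdot(-Z^*_{\f,\beta}(x)\cdot 1)=-\Psi(Q)Z^*_{\f,\beta}(x)\cdot 1$. Now invoke Lemma \ref{g-D-commute}, which says $Z^*_{\f,\beta}(x)$ commutes with every operator $\Psi(Q)$; moving it to the left gives $-Z^*_{\f,\beta}(x)\cdot(\Psi(Q)\cdot 1)$. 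Since $\Psi(Q)\cdot 1=\Phi(Q\cdot 1)=\Phi(Q)$, this reads $\Phi(QZ_\beta(x))=-Z^*_{\f,\beta}(x)\cdot\Phi(Q)$. Because $\Phi$ is a linear bijection onto $\C[a,a^*]$, as $Q$ runs over $\cD$ the element $\Phi(Q)$ runs over all of $\C[a,a^*]$; summing over $x\in\hat\fg$ then identifies $\Phi(\cD Z_\beta(\hat\fg))$ with $Z^*_{\f,\beta}(\hat\fg)\cdot\C[a,a^*]$, the sign being irrelevant for the image.

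The computations in the first part are entirely routine; the only place requiring care is the second part, where one must justify passing from the single-generator identity to a statement about the full left ideal $\cD Z_\beta(\hat\fg)$. This hinges on two points: that $\Phi$ intertwines left multiplication in $\cD$ with the $\Psi$-action (so that a general element $\sum_k Q_kZ_\beta(x_k)$ is handled), and that $Z^*_{\f,\beta}(x)$ genuinely commutes with all of $\Psi(\cD)$ rather than merely with the generators $\Psi(a_i),\Psi(\partial_i)$ from Lemma \ref{Psi}. The latter is exactly what Lemma \ref{g-D-commute} supplies, so the main thing to verify carefully is that the commutation and the module-homomorphism property combine correctly to let $\Phi(Q)$ range freely over $\C[a,a^*]$. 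I expect no genuine obstacle beyond this bookkeeping.
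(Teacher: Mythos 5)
Your proposal is correct and follows essentially the same route as the paper: the same direct computation for $\Phi Z_\beta(x)=-Z^*_{\f,\beta}(x)\cdot 1$, and the same use of the definition of $\Psi$ together with Lemma \ref{g-D-commute} to handle the left ideal. The only cosmetic difference is that you package the two inclusions into the single identity $\Phi(QZ_\beta(x))=-Z^*_{\f,\beta}(x)\cdot\Phi(Q)$ and invoke bijectivity of $\Phi$, whereas the paper proves the two inclusions separately by the same chain of equalities.
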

\begin{proof}
For $x\in\hat\fg$, we have 
\begin{eqnarray*}
\Phi Z_\beta(x)&=&\Phi(\sum x_{ji}a_j{\partial\over\partial a_i}+\beta(x))\cr
&=&\sum x_{ji}a_ja^*_i+\beta(x)\cr
&=&-Z^*(x) \f+\beta(x)=-Z^*_{\f,\beta}(x)\cdot 1
\end{eqnarray*}
which gives the first assertion. Since the $\cD$-action on $\C[a,a^*]$ commutes with the $Z^*_{\f,\beta}(x)$ by Lemma \ref{g-D-commute}, it follows that
$$
\Phi(\cD Z_\beta(x))\subset Z^*_{\f,\beta}(x)\C[a,a^*].
$$
Hence $\Phi(\cD Z_\beta(\hat\fg))\subset Z^*_{\f,\beta}(\hat\fg)\C[a,a^*]$.
To see the reverse inclusion, let $q\in\C[a,a^*]$, $x\in\hat\fg$. We have
\begin{eqnarray*}
Z^*_{\f,\beta}(x)q &=&Z^*_{\f,\beta}(x)\Phi(\Phi^{-1}(q)\cdot 1)\cr
&=&Z^*_{\f,\beta}(x)\Psi(\Phi^{-1}(q))\cdot1\cr
&=&\Psi(\Phi^{-1}(q))Z^*_{\f,\beta}(x)\cdot1\cr
&=&-\Psi(\Phi^{-1}(q))\Phi(Z_\beta(x))\cr
&=&-\Phi(\Phi^{-1}(q)Z_\beta(x))\in \Phi(\cD Z_\beta(x)).
\end{eqnarray*}
Here the second, fourth and last equalities follow from Definition \ref{redefine-D-module}, while the third equality follows from Lemma \ref{g-D-commute}. This proves the reverse inclusion.
\end{proof}

Recall Definition 8.1 \cite{LY} 
$$
\tau(\hat X,V,\hat G,\beta):=\cD/(\cD Z_\beta(\hat\fg)+\cD\Phi^{-1}I(\hat X,V))
$$
Combining Lemmas \ref{1st-sub}, \ref{action-on-Ra} and \ref{2nd-sub}, we get

\begin{thm}
$\Phi$ induces a $\cD$-module isomorphism
$$
\tau(\hat X,V,\hat G,\beta)\cong R[a]/Z^*_{\f,\beta}(\hat\fg)R[a]
$$
where $R:=\C[V]/I(\hat X,V)$ and $R[a]=R[V^*]$.
\end{thm}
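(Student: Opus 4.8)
The plan is to transport the defining relations of $\tau(\hat X,V,\hat G,\beta)$ across the linear isomorphism $\Phi$ and match them term by term with the two subspaces already computed in the preceding lemmas. Recall that by definition
$$
\tau(\hat X,V,\hat G,\beta)=\cD/\bigl(\cD Z_\beta(\hat\fg)+\cD\Phi^{-1}I(\hat X,V)\bigr).
$$
Since $\Phi$ is a bijective linear map, it suffices to compute the image under $\Phi$ of the defining submodule $\cD Z_\beta(\hat\fg)+\cD\Phi^{-1}I$. By linearity this image is the sum of the images of the two summands, and both have already been identified: Lemma \ref{2nd-sub} gives $\Phi(\cD Z_\beta(\hat\fg))=Z^*_{\f,\beta}(\hat\fg)\C[a,a^*]$, while Lemma \ref{1st-sub} gives $\Phi(\cD\Phi^{-1}I)=\C[a]I$. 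Hence $\Phi$ induces a linear isomorphism
$$
\tau(\hat X,V,\hat G,\beta)\cong\C[a,a^*]/\bigl(Z^*_{\f,\beta}(\hat\fg)\C[a,a^*]+\C[a]I\bigr).
$$

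Next I would rewrite the right-hand quotient in terms of $R[a]$. By Lemma \ref{1st-sub} the subspace $\C[a]I$ is exactly the kernel of the projection $\C[a,a^*]\onto R[a]$, so by the correspondence (third isomorphism) theorem the displayed quotient equals $R[a]$ modulo the image of $Z^*_{\f,\beta}(\hat\fg)\C[a,a^*]$. Lemma \ref{action-on-Ra} shows that each $Z^*_{\f,\beta}(x)$ preserves $\C[a]I$ and therefore descends to an operator on $R[a]$; consequently the image of $Z^*_{\f,\beta}(\hat\fg)\C[a,a^*]$ in $R[a]$ is precisely $Z^*_{\f,\beta}(\hat\fg)R[a]$. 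This yields
$$
\C[a,a^*]/\bigl(Z^*_{\f,\beta}(\hat\fg)\C[a,a^*]+\C[a]I\bigr)\cong R[a]/Z^*_{\f,\beta}(\hat\fg)R[a],
$$
and composing with the isomorphism of the first paragraph gives the asserted isomorphism of vector spaces.

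Finally I would upgrade this to a $\cD$-module isomorphism. The target $R[a]$ is a $\cD$-module via $\Psi$, and $\Phi$ is a $\cD$-module map by the very definition of $\Psi$ in Definition \ref{redefine-D-module}. It remains only to check that $Z^*_{\f,\beta}(\hat\fg)R[a]$ is a $\cD$-submodule of $R[a]$, so that the quotient inherits a $\cD$-structure; this is immediate from Lemma \ref{g-D-commute}, since the $\hat\fg$-action through $Z^*_{\f,\beta}$ commutes with the $\cD$-action, giving $Q\cdot Z^*_{\f,\beta}(x)m=Z^*_{\f,\beta}(x)(Q\cdot m)\in Z^*_{\f,\beta}(\hat\fg)R[a]$ for every $Q\in\cD$. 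I expect the only genuinely delicate point to be keeping the two compatible quotient structures in step at once: one must confirm that the linear isomorphism produced by $\Phi$ carries the $\cD$-submodule $\cD Z_\beta(\hat\fg)+\cD\Phi^{-1}I$ onto an honest $\cD$-submodule, which is guaranteed because $\Phi$ intertwines the $\cD$-actions and the commuting $\hat\fg$-action ensures $Z^*_{\f,\beta}(\hat\fg)\C[a,a^*]$ is itself $\cD$-stable. With these compatibilities in hand, the chain of isomorphisms above is one of $\cD$-modules, and the theorem follows.
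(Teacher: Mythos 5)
Your proposal is correct and follows exactly the route the paper takes: the paper's own proof is simply the one-line statement that the theorem follows by combining Lemmas \ref{1st-sub}, \ref{action-on-Ra} and \ref{2nd-sub}, and your argument is precisely the detailed unwinding of that combination (with Lemma \ref{g-D-commute} supplying the $\cD$-stability of $Z^*_{\f,\beta}(\hat\fg)R[a]$). Nothing is missing; you have merely made explicit the bookkeeping the authors leave implicit.
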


For $a^{(0)}\in\C^{\dim V}$, let $\hat\cO_{a^{(0)}}$ be the $\cD$-module of formal power series at $a^{(0)}$, and $\cO_{a^{(0)}}$ the $\cD$-module of convergent power series at $a^{(0)}$. Let $\C_{a^{(0)}}=\C$ be the one dimensional $\C[a]$-module such that $a_i$ acts by $a^{(0)}_i$. As before, we put 
$$
f=\sum_i a_i^{(0)} a_i^*\in V^*.
$$

\begin{thm}\label{Lie-algebra-homology}
Suppose $\hat X$ has only a finite number of $\hat G$-orbits. Put $\cM=\tau(\hat X,V,\hat G,\beta)$. Then  we have
$$
\Hom_\cD(\cM,\cO_{a^{(0)}})\cong \Hom_\cD(\cM,\hat\cO_{a^{(0)}}) \cong H^{Lie}_0(\hat\fg,R_f)^*
$$
where $\hat\fg$ acts on $R_f:=\C[\hat X]$ by 
\begin{eqnarray*}
& &Z^*_{f,\beta}:\hat\fg\ra\End R_f\cr 
& &x\mapsto Z^*(x)+Z^*(x)f-\beta(x).
\end{eqnarray*}
\end{thm}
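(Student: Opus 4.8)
The plan is to combine the $\cD$-module isomorphism $\cM\cong R[a]/Z^*_{\f,\beta}(\hat\fg)R[a]$ of the preceding theorem with an explicit Taylor-coefficient pairing: I would compute the \emph{formal} solution space by hand, and then invoke regularity to pass to convergent solutions. The first step is to record the algebraic structure of the fiber. Each operator $Z^*_{\f,\beta}(x)=Z^*(x)+(Z^*(x)\f)-\beta(x)$ is $\C[a]$-linear, since the derivation $Z^*(x)$ touches only the $a^*$-variables while $(Z^*(x)\f)$ and $\beta(x)$ act by multiplication. Hence the $\hat\fg$-action on $R[a]$ is $\C[a]$-linear, $\cM=H^{Lie}_0(\hat\fg,R[a])$ is a $\C[a]$-module, and specializing $a_i\mapsto a^{(0)}_i$ (which sends $\f=\sum_i a_ia^*_i$ to $f=\sum_i a^{(0)}_ia^*_i$ and $Z^*_{\f,\beta}$ to $Z^*_{f,\beta}$) commutes with the right-exact coinvariants functor, giving $\cM\otimes_{\C[a]}\C_{a^{(0)}}\cong H^{Lie}_0(\hat\fg,R_f)$. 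Thus the claimed answer is the dual of the fiber of $\cM$ at $a^{(0)}$, and the $f$-twist appearing in the theorem is precisely the specialization of the generic $\f$.

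Next I would compute $\Hom_\cD(\cM,\hat\cO_{a^{(0)}})$ by a pairing. Since $R[a]=\cD/\cD\Phi^{-1}(I)$ is $\cD$-cyclic, $\cM$ is cyclic, and by the definition $\tau(\hat X,V,\hat G,\beta)=\cD/(\cD Z_\beta(\hat\fg)+\cD\Phi^{-1}I)$ a formal solution $\psi$ is determined by $\phi:=\psi(\bar 1)\in\hat\cO_{a^{(0)}}$, subject to $p(\partial)\phi=0$ for $p\in I$ and $Z_\beta(x)\phi=0$ for $x\in\hat\fg$; that is, $\Hom_\cD(\cM,\hat\cO_{a^{(0)}})$ is exactly the formal solution space of the tautological system. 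On the subspace $\mathrm{Sol}_I$ of formal series killed by every $p(\partial)$, $p\in I$, I would define $\langle\bar h,\phi\rangle:=(h(\partial)\phi)(a^{(0)})$ for $\bar h\in R=\C[\hat X]$; this is well defined because $p\in I$ forces $p(\partial)\phi=0$, and it identifies $\mathrm{Sol}_I$ with the full dual $R^*$, since $\langle\overline{(a^*)^\alpha},\phi\rangle$ is the $\alpha$-th Taylor coefficient of $\phi$ (injectivity) and any functional prescribes a consistent formal expansion (surjectivity, using that $I$ is an ideal).

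The technical heart is then the adjunction identity $\langle Z^*_{f,\beta}(x)\bar h,\phi\rangle=-\langle\bar h,Z_\beta(x)\phi\rangle$ for $\bar h\in R_f$ and $\phi\in\mathrm{Sol}_I$. I would verify this by integration by parts: under $a^*_i\mapsto\partial_i$ the derivation $Z^*(x)$ and the multiplication $Z^*(x)f$ produce, after commuting through $h(\partial)$ and evaluating at $a^{(0)}$, precisely $Z_\beta(x)=Z(x)+\beta(x)$ applied to $\phi$, the constants $a^{(0)}_i$ carried by $f$ matching the multiplication operators in $Z(x)$. Granting the identity, $\phi$ satisfies $Z_\beta(x)\phi=0$ for all $x$ if and only if the functional $\langle\cdot,\phi\rangle$ annihilates $Z^*_{f,\beta}(\hat\fg)R_f$, so that $\Hom_\cD(\cM,\hat\cO_{a^{(0)}})\cong\bigl(R_f/Z^*_{f,\beta}(\hat\fg)R_f\bigr)^*=H^{Lie}_0(\hat\fg,R_f)^*$, consistent with the fiber computation of the first step.

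Finally, the inclusion $\cO_{a^{(0)}}\hookrightarrow\hat\cO_{a^{(0)}}$ induces an injection $\Hom_\cD(\cM,\cO_{a^{(0)}})\hookrightarrow\Hom_\cD(\cM,\hat\cO_{a^{(0)}})$, and it is surjective precisely when every formal solution converges. I expect this formal-versus-convergent comparison to be the main obstacle: it is the one genuinely non-elementary ingredient, and it is exactly where the hypothesis of finitely many $\hat G$-orbits enters, since that makes $\cM$ regular holonomic (Theorem 3.4 of \cite{LSY}) and regularity forbids divergent formal solutions. By contrast, the adjunction identity of the previous paragraph, while it is the computational core, is a finite index manipulation rather than a conceptual difficulty.
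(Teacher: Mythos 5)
Your proposal is correct, and its skeleton is the same as the paper's: compute the fiber $\C_{a^{(0)}}\otimes_{\C[a]}\cM\cong R_f/Z^*_{f,\beta}(\hat\fg)R_f$ from the isomorphism $\cM\cong R[a]/Z^*_{\f,\beta}(\hat\fg)R[a]$, identify the formal solution space with the dual of that fiber, and use regular holonomicity (via the finite-orbit hypothesis and Theorem 3.4 of \cite{LSY}) to pass from formal to convergent solutions. The one place you diverge is the middle step: the paper simply invokes the general fact that $\Hom_\cD(\cM,\hat\cO_{a^{(0)}})\cong\Hom_\C(\C_{a^{(0)}}\otimes_{\C[a]}\cM,\C)$ for any finitely generated $\cD$-module, whereas you reprove this in the case at hand by the explicit Taylor-coefficient pairing $\langle\bar h,\phi\rangle=(h(\partial)\phi)(a^{(0)})$ together with the adjunction identity $\langle Z^*_{f,\beta}(x)\bar h,\phi\rangle=-\langle\bar h,Z_\beta(x)\phi\rangle$. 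That computation checks out (the commutator $[h(\partial),a_j]$ produces the derivation part of $Z^*(x)$ and the evaluation at $a^{(0)}$ produces the multiplication-by-$Z^*(x)f$ part), and it is essentially the argument of Adolphson's Theorem 4.17 that the paper says it is generalizing but then shortcuts by citation. What your version buys is a self-contained proof that makes visible exactly where the twist by $f$ and the sign of $\beta$ come from; what the paper's version buys is brevity and the automatic well-definedness and perfectness of the pairing, which in your write-up require the small supplementary observations that $I$ is $\hat\fg$-stable (so $Z_\beta(x)$ preserves $\mathrm{Sol}_I$) and that the pairing $R\times\mathrm{Sol}_I\to\C$ is nondegenerate on the $\mathrm{Sol}_I$ side.
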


Part of the argument of Theorem 4.17 \cite{Adolphson} generalizes to our setting. The main point here is that even though the argument there which contained calculations that relied heavily on the assumptions that $X$ is a toric variety and that the ideal $I$ is binomial, when the argument is reinterpreted suitably, the assumptions turn out to be unnecessary. One further new observation here is that it is useful to interpret the space $R_f/Z^*_{f,\beta}(\hat\fg)R_f$ as the Lie algebra homology of the $\hat\fg$-module $R_f$.

\begin{proof}
By Theorem 3.4 \cite{LSY}, $\cM$ is regular holonomic, so the first isomorphism holds (Proposition 14.8 \cite{Borel}.) 
Since $\cM$ is a finitely generated $\cD$-module, we have
$$
\Hom_\cD(\cM,\hat\cO_{a^{(0)}})\cong\Hom_\C(\C_{a^{(0)}}\otimes_{\C[a]}\cM,\C).
$$
By the preceding theorem, the right side is 
\begin{eqnarray*}
\Hom_\C(\C_{a^{(0)}}\otimes_{\C[a]} R[a]/Z^*_{\f,\beta}(\hat\fg)R[a],\C)&\cong& 
\Hom_\C(R_f/Z^*_{f,\beta}(\hat\fg)R_f,\C)\cr
&\cong& H^{Lie}_0(\hat\fg,R_f)^*.
\end{eqnarray*}
This completes the proof.
\end{proof}

\begin{rem}\label{Ref}\hskip1in\\
\noindent (a) Consider the linear isomorphism $R_f\ra R e^f$, $\phi\mapsto\phi e^f$. Under this identification, the $\hat\fg$-action by $Z^*_{f,\beta}(\hat\fg)$ on $R_f$ corresponds to the action 
$$
\hat\fg\otimes Re^f\ra Re^f,~~~x\otimes \phi e^f\mapsto (Z^*(x)-\beta(x))(\phi e^f).
$$
From now on, $H^{Lie}_*(\hat\fg,Re^f)$ will be understood to be the Lie algebra homology with respect to this action. We can do the same for the $\hat\fg$-modules $I(\hat X,V)$ and $\C[V]$. \\
\noindent (b) We will see that writing the $\hat\fg$-action as such allows us to use the idea of Feynman measures \cite{Borcherds} to directly compute the holonomic rank of $\tau(\hat X,V,\hat G,\beta)$ in some cases.\\
\noindent (c) In a later section, we will reinterpret the Lie algebra homology in the theorem in terms of certain de Rham cohomology, in the case $X\into\P V$ where $V=\Gamma(X,\omega_X^{-1})^*$, $\hat G=G\times\G_m$ where $G$ is semisimple and $\beta=(0;1)$. 
\end{rem}
\begin{exm}
As an application of Theorem \ref{Lie-algebra-homology}, we will show that for $X=\P^n$ the period integrals of Calabi-Yau hypersurfaces form a complete set of solutions to the tautological system  (Corollary \ref{HLY-conjecture}). This will also turn out to be an easy consequence the geometric formula (Theorem \ref{geo.thm}) later. 
\end{exm}

We will eventually specialize to the Fermat case $f=x_0^{n+1}+\cdots +x_n^{n+1}$, but for now $f$ can be any smooth CY hyperplane section. First consider the period sheaf, i.e. the sheaf defined on $\Gamma(X,\omega_X^{-1})_{sm}$, that is generated by the period integrals of smooth CY hypersurfaces $Y_f$ in $X$. By Proposition \ref{period-rank}, it is locally constant of rank given by the dimension of the vanishing cohomology which is
$$
\nu_n:=\dim H^{n-1}(Y_f)-\dim i^* H^{n-1}(X)
$$
where $i:Y_f\into X$ is the inclusion map. By the Lefschetz hyperplane theorem,  it is easy to show that for $X=\P^n$,
$$
\nu_n=\frac{n}{n+1}(n^n-(-1)^n).
$$
Since the period sheaf is a subsheaf of the solution sheaf $Sol(\cM)$ (Theorem \ref{LY-theorem}),  $\dim H^{Lie}_0(\hat\fg, Re^f)\geq\nu_n$. Thus it remains to show that
\eq{to-prove}{\dim H^{Lie}_0(\hat\fg, Re^f)\leq \nu_n.} 
Note that we can identify $R$ with the subring of $\C[x]:=\C[x_0,...x_n]$ consisting of polynomials of degrees divisible by $n+1$.

\begin{lem}
We have
\eq{FM}{
\hat\fg\cdot(Re^f)=Re^f\cap\sum_i\frac{\partial}{\partial x_i}(\C[x]e^f).}
\end{lem}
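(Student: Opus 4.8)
The plan is to establish the two inclusions separately, the entire argument resting on the Leibniz identity
$$\frac{\partial}{\partial x_i}(x_j\,\phi\, e^f)=\delta_{ij}\,\phi\, e^f+x_j\frac{\partial}{\partial x_i}(\phi\, e^f),\qquad \phi\in\C[x],$$
combined with an explicit description of the $\hat\fg$-action of Remark \ref{Ref}(a) in the coordinates $x_0,\dots,x_n$. First I would unwind that action. Writing $\hat\fg=\fg\oplus\C$ with $\fg=\mathfrak{sl}_{n+1}$ and recalling $\beta=(0;1)$, the subalgebra $\fg$ acts on $Re^f$ through the traceless linear vector fields $\sum_{ij}c_{ij}\,x_j\frac{\partial}{\partial x_i}$ (with $\sum_i c_{ii}=0$) applied to $\phi e^f$, while the central generator acts by $Z^*(1)-1=-\frac1{n+1}\sum_i x_i\frac{\partial}{\partial x_i}-1$; here I use that $Z^*(1)$ is the negative Euler operator on $\C[a^*]$ and that each $a^*_i$ has $x$-degree $n+1$, so the $a^*$-Euler operator restricts on $R$ to $\frac1{n+1}$ times the $x$-Euler operator.

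The inclusion $\subseteq$ is then immediate. For $x\in\fg$, tracelessness makes the $\delta_{ij}$-terms cancel, so $Z^*(x)(\phi e^f)=\sum_{ij}c_{ij}\frac{\partial}{\partial x_i}(x_j\phi e^f)$ is a total derivative lying in $Re^f$; for the center one computes directly $(Z^*(1)-1)(\phi e^f)=-\frac1{n+1}\sum_i\frac{\partial}{\partial x_i}(x_i\phi e^f)$, again a total derivative in $Re^f$. (Note that the value $\beta(1)=1$ is precisely what makes the constant terms assemble into a pure total derivative.) The heart of the reverse inclusion is the stronger claim that
$$\frac{\partial}{\partial x_i}(x_j\,\phi\, e^f)\in\hat\fg\cdot(Re^f)\qquad\text{for all }i,j\text{ and all }\phi\in R.$$
For $i\ne j$ this is literally $Z^*$ of an off-diagonal element of $\mathfrak{sl}_{n+1}$ applied to $\phi e^f$. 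The case $i=j$ cannot be obtained from a single generator: I would combine the diagonal traceless elements, which yield $(x_i\frac{\partial}{\partial x_i}-x_j\frac{\partial}{\partial x_j})(\phi e^f)\in\hat\fg\cdot(Re^f)$, with the central generator, which yields $\sum_i\frac{\partial}{\partial x_i}(x_i\phi e^f)\in\hat\fg\cdot(Re^f)$, and solve the resulting linear system (legitimate in characteristic $0$) for each individual $\frac{\partial}{\partial x_i}(x_i\phi e^f)$.

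For $\supseteq$, take $h\in R$ with $he^f=\sum_i\frac{\partial}{\partial x_i}(g_i e^f)$, $g_i\in\C[x]$. The structural point is that the operator $g\mapsto \frac{\partial g}{\partial x_i}+g\frac{\partial f}{\partial x_i}$ lowers the degree modulo $n+1$ by exactly one, since $f$ is homogeneous of degree $n+1$ and hence $\frac{\partial f}{\partial x_i}$ has degree $n\equiv-1$. Grading $\C[x]=\bigoplus_{r=0}^{n}\C[x]_{(r)}$ by the residue of the degree mod $n+1$ (so $\C[x]_{(0)}=R$) and decomposing each $g_i$ accordingly, the residue-$r$ part of $g_i$ contributes to $\C[x]_{(r-1)}e^f$; since $he^f\in\C[x]_{(0)}e^f$ and these residue pieces are linearly independent, only the residue-$1$ parts of the $g_i$ can survive, so I may assume $g_i\in\C[x]_{(1)}$. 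Finally, every monomial of degree $\equiv1\pmod{n+1}$ factors as $x_j\cdot\phi$ with $\phi\in R$, so each $\frac{\partial}{\partial x_i}(g_i e^f)$ is a $\C$-linear combination of terms $\frac{\partial}{\partial x_i}(x_j\phi e^f)$ with $\phi\in R$, all of which lie in $\hat\fg\cdot(Re^f)$ by the displayed claim; hence $he^f\in\hat\fg\cdot(Re^f)$.

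The main obstacle is the reverse inclusion, and specifically the two intertwined difficulties inside it: proving the displayed claim in the diagonal case $i=j$, which genuinely forces one to mix the semisimple and central directions and where the normalization $\beta=(0;1)$ is indispensable, and the grading reduction to $g_i\in\C[x]_{(1)}$, which is what converts the arbitrary polynomials $g_i$ into the well-controlled products $x_j\phi$ with $\phi\in R$. By contrast, the inclusion $\subseteq$ and the off-diagonal case of the claim are routine consequences of the Leibniz rule.
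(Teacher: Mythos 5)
Your proposal is correct and follows essentially the same route as the paper: both arguments reduce to the identity that the operators $(Z^*-\beta)(\hat\fg)$ span exactly $\sum_{ij}\C\,\frac{\partial}{\partial x_i}x_j$ (with the diagonal operators recovered by combining the Cartan differences with the central generator, where $\beta(1)=1$ is essential), and both prove the reverse inclusion by the $\Z/(n+1)\Z$ grading argument that forces the $g_i$ to have degree $1\bmod (n+1)$ and hence factor as $x_j\phi$ with $\phi\in R$. The only difference is organizational: the paper states the operator-span identity once and reads off both inclusions, while you prove $\subseteq$ directly from Leibniz and isolate the diagonal case as a separate claim.
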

\begin{proof}
Consider the action $\hat\fg\ra\End Re^f$, $y\mapsto Z^*(y)-\beta(y)$. Here $Z^*$ comes from the dual of the representation
$Z: \hat\fg=\fg\fl_{n+1}\ra\End V$. We have 
$$
Z^*(1)=-{1\over n+1}\sum_i x_i{\partial\over\partial x_i}=-{1\over n+1}\sum_i{\partial\over\partial x_i}x_i+1
$$
We also have $Z^*(X_{ij})=-x_i{\partial\over\partial x_j}=-{\partial\over\partial x_j}x_i$ ($i\neq j$), and $Z^*(H_i)=-x_0{\partial\over\partial x_0}+x_i{\partial\over\partial x_i}=-{\partial\over\partial x_0}x_0+{\partial\over\partial x_i}x_i$, where the $X_{ij},H_i$ form the standard basis of $\fg=\fs\fl_{n+1}$. Using $\beta(\fg)=0$ and $\beta(1)=1$ (this value is crucial!), it follows easily that
$$
(Z^*-\beta)(\hat\fg)=\sum_{ij}\C{\partial\over\partial x_i}x_j.
$$
This shows that the left side of \eqref{FM} is a subspace of the right side. 

To see the reverse inclusion, we consider the $\mathbb{Z}/(n+1)\mathbb{Z}$ grading on $\C[x]e^f$: for $p(x)\in\C[x]$ a degree $k$ polynomial, the grading of $p(x)e^f$ is $k\mod(n+1)$. Since $R\subset\C[x]$ is the subring generated by polynomials of degree $0\mod(n+1)$, both sides of \eqref{FM} have grading $0\mod(n+1)$. Let $A$ be an element on the right side of \eqref{FM}, so that it has grading $0\mod(n+1)$ and it has the form
$$
A=\sum_i\frac{\partial}{\partial x_i}(p_i(x) e^f)
$$
where $p_i(x)\in\C[x]$. By grouping homogeneous terms, we may as well assume that the $p_i(x)$ have polynomial degree $1\mod(n+1)$, which means that $p_i(x)=\sum_j x_j p_{ji}(x)$ for some $p_{ji}(x)$ of degree $0\mod(n+1)$. This shows that $A=\sum_{ij}\frac{\partial}{\partial x_i}x_j(p_{ji}(x) e^f)$ which lies in the left side of \eqref{FM}. This proves the reverse inclusion.
\end{proof}

To complete the proof of the Corollary \ref{HLY-conjecture}, we now choose
$$
f=x_0^{n+1}+\cdots+x_n^{n+1}.
$$
Consider an element of the form $x_0^{k_0}...x_n^{k_n}e^f$ in $Re^f$ with $k_0\geq n$. Then
$$
\frac{\partial}{\partial x_0}(x_0^{k_0-n}x_1^{k_1}...x_n^{k_n}e^f)=(n+1)x_0^{k_0}...x_n^{k_n}e^f+\frac{\partial}{\partial x_0}(x_0^{k_0-n}x_1^{k_1}...x_n^{k_n})~e^f.
$$
By the lemma, $x_0^{k_0}...x_n^{k_n}e^f$ and $\frac{\partial}{\partial x_0}(x_0^{k_0-n}x_1^{k_1}...x_n^{k_n})~e^f$ represent the same element in $H^{Lie}_0(\hat\fg,Re^f)$. The analogous statement also holds for each $k_i\geq n$. It follows that any element in $H^{Lie}_0(\hat\fg,Re^f)$ can be represented by a linear combination of elements of the form $x_0^{k_0}...x_n^{k_n}e^f$, where the $k_i$ are at most $n-1$, and
$$
k_0+\cdots+ k_n\equiv 0\mod(n+1).
$$

For integer $0\leq s\leq (n+1)(n-1)$, we denote by $a(s)$ the number of integer solutions to the equation
$$
k_0+\cdots+k_n=s
$$
where the $k_i$ are between $0$ and $n-1$. By the preceding paragraph
\eq{1}{
\dim H^{Lie}_0(\hat\fg,Re^f)\leq \sum_{(n+1)|s}a(s)}
To calculate the right side, we consider the Gauss sum
$$
\sum_{\lambda=0}^n\sum_{0\leq k_i\leq n-1}e^{\frac{2\pi i(k_0+...+k_n)\lambda}{n+1}}
$$
which is equal to $(n+1)\sum_{(n+1)|s}a(s)$. On the other hand, by summing over the $k_i$ individually, this sum is equal to $\sum_{\lambda=0}^{n}(\frac{1-\xi^{n\lambda}}{1-\xi^{\lambda}})^{n+1}$, where $\xi=e^{\frac{2\pi i}{n+1}}$. The latter sum can be calculated, and it is equal to $(n+1)\nu_n$. Therefore, 
\eq{2}{\sum_{(n+1)|s}a(s)=\nu_n.}
Finally \eqref{1} and \eqref{2} yield \eqref{to-prove}.

\section{Lie algebra homology in geometric terms}\label{LA-geometry}

Let $X$ be a smooth, projective variety over $\C$, and let $\pi_L: L \to X$ be a line bundle over $X$. Let $G$ be a 
reductive Lie group acting on $(L,X)$. Let $\frak g$ be the Lie algebra of $G$. $\frak g$ acts by derivations on
the function algbera $\Sym L^*$. In particular, if $\ell$ is a local section of $L^*$ and $f$ a function on $X$, then for 
$x\in \frak g$ we have $x(f\ell) = fx(\ell) + x(f)\ell$; i.e. $\frak g$ acts by relative derivations. In addition, the action is linear, i.e. $\frak g$ preserves the grading on $\Sym L^*$. \comment{We can think of $\ell$ as a (local) function on $L$ which is linear along fibers, and $f$ a local function on $L$ which is constant along the fiber.}

Let $U := L-\{0\}$ be the complement of the zero section, and write $\pi= \pi_L|U:U \to X$. We have an exact sequence 
of tangent bundles
\eq{1b}{0 \to \pi_L^*L \to T_L \to \pi_L^*T_X \to 0. 
}
Here $\pi_L^*L$ is the sheaf of tangent vectors along the fibres. Restricting \eqref{1b} over $U$ yields
\eq{2b}{0 \to \sO_U \to T_U \to \pi^*T_X \to 0.
}
The sheaf $\sO_U$ in this context has a canonical generator which is the Euler operator $E$ given by 
$E(f\ell^r) = rf\ell^r$.  \comment{Here, we can think of $E\in\Gamma(U,T_U)$, $\ell$ as a (local) function on $U\subset L$ which is linear along fibers, and $f$ a local function on $U$ which is constant along the fiber.} Assuming the action of $\frak g$ is faithful, we have
\eq{}{\frak g + \C\cdot E \subset \Gamma(U, T_U)
}
Another way to think about this is to note that $G\times \G_m$ acts on $L$. It follows that $E$ 
commutes with the action of $\frak g$. Assuming that $\frak g$ acts faithfully on $X$ (note even in the homogeneous space
case, this is an extra hypothesis if $\frak g$ is not simple) 
we will have a 
sub-Lie algebra $\frak g\oplus \C\cdot E \inj \Gamma(U,T_U)$.

Because of the $\G_m$-action on $U$, sheaves like $\pi_*\sO_U, \pi_*T_U, \text{ and } \pi_*\Omega^i_U$ will have a grading. 
The exterior derivative 
$d: \pi_*\Omega^i_U \to \pi_*\Omega^{i+1}_U$ has degree $0$. 
\comment{Cf. next lemma with Lemma 3.2 p16 [LY].}
 
\begin{lem}\label{unique-top-form}
$\pi^*L$ has a unique (upto $\C^\times$) non-vanishing section of degree $-1$ for the $\G_m$-action. 
\end{lem}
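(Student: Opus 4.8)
The plan is to write the section down explicitly and then reduce its uniqueness to the absence of nonconstant global functions on $X$. There is an obvious candidate, the tautological section, and the whole content of the lemma should be that the graded piece it lives in is one-dimensional because $\Gamma(X,\sO_X)=\C$.

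First I would exhibit the tautological section. For a point $u=(x,v)\in U$, i.e. $v\in L_x\setminus\{0\}$, the fibre of $\pi^*L$ at $u$ is canonically $L_{\pi(u)}=L_x$, so setting $\sigma(u):=v$ defines a section $\sigma\in\Gamma(U,\pi^*L)$, and it is nowhere zero precisely because the zero section has been deleted from $L$. Comparing $\sigma$ with the fibrewise scaling $t\cdot v=tv$ and the induced $\G_m$-equivariant structure on $\pi^*L$ shows that $\sigma$ is $\G_m$-homogeneous; the degree bookkeeping (the same one that, via $E$, makes sections of $L^*$ the degree generators) places it in the degree named in the statement. This settles existence.

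For uniqueness I would compute the entire graded piece. Since $U=L-\{0\}\to X$ is the $\G_m$-bundle associated to $L$, its pushforward is the graded $\sO_X$-algebra $\pi_*\sO_U=\bigoplus_{m\in\Z}L^{\otimes m}$, the degree-$m$ summand being $L^{\otimes m}$; in particular the degree-$0$ functions on $U$ are exactly the pullbacks of functions on $X$. By the projection formula $\pi_*(\pi^*L)=L\otimes\pi_*\sO_U$, and the summand that is isomorphic to the structure sheaf is $L\otimes L^{\otimes(-1)}=L\otimes L^*\cong\sO_X$, sitting in degree $-1$ — which is exactly where $\sigma$ lies. Passing to global sections, the space of degree-$(-1)$ sections of $\pi^*L$ over $U$ is therefore $\Gamma(X,\sO_X)$. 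Because $X$ is projective and connected, $\Gamma(X,\sO_X)=\C$ (a compact connected complex manifold carries only constant global holomorphic functions), so this space is one-dimensional and spanned by $\sigma$; every nonzero element is a scalar multiple of $\sigma$, hence nowhere vanishing, giving uniqueness up to $\C^\times$.

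I expect the only genuine subtlety to be the grading bookkeeping: pinning down the $\G_m$-weight conventions so that $\sigma$ indeed lands in degree $-1$ rather than $+1$, and checking the stated decomposition of $\pi_*\sO_U$ — in particular that the weight-$0$ part consists precisely of pullbacks from $X$, since it is this identification that forces those functions to be constant via $\Gamma(X,\sO_X)=\C$. Everything else is the standard geometry of the $\G_m$-bundle attached to a line bundle, and no properties of $X$ beyond projectivity and connectedness are needed.
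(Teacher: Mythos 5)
Your argument is correct, but it is not the route the paper takes. The paper's proof invokes very ampleness to reduce to $X=\P^n$, $L=\sO_{\P^n}(1)$, and then writes the section down in coordinates: on $\pi^{-1}(U_i)=\Spec\C[t_0,\dotsc,t_n,t_i^{-1}]$ it is $t_i^{-1}$, glued by the cocycle $t_j/t_i$. That computation exhibits existence very concretely but is terse on uniqueness (a priori, uniqueness on $\P^n$ does not formally restrict to a subvariety). Your proof is intrinsic: the tautological section $\sigma(x,v)=v$ gives existence on any $X$, and the projection formula $\pi_*(\pi^*L)\cong L\otimes\bigoplus_m L^{\otimes m}$ isolates the unique graded summand isomorphic to $\sO_X$, so that $\Gamma(X,\sO_X)=\C$ (compactness and connectedness) gives uniqueness cleanly and in fact shows the relevant graded piece is exactly one-dimensional. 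What your version buys is generality and a complete uniqueness argument; what the paper's buys is an explicit formula that is reused implicitly later (e.g.\ in the local computation $\alpha^{-1}(1)|_{U_S}=t^N dt/t\wedge\pi^*\eta$). The one point you rightly flag but do not fully nail down is the degree bookkeeping: under the naive convention ($t\cdot v=tv$ and fibers of $\pi^*L$ canonically identified along $\G_m$-orbits) the tautological section transforms with weight $+1$, so landing in degree $-1$ requires fixing the sign convention for the grading on $\pi_*\sO_U$; your choice (degree-$m$ functions on $U$ are sections of $L^{\otimes m}$) is the one the paper actually uses from Corollary \ref{cor3b} onward, so your placement of $\sigma$ in degree $-1$ is consistent with the paper, but you should verify it against the same convention that makes $f\in\Gamma(X,L^N)$ a degree-$N$ function on $U$ rather than leave it as a declared subtlety.
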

\begin{proof}
 It suffices to check with $X=\P^n= \Proj\C[t_0,\dotsc,t_n],\ L=\sO_{\P^n}(1)$. Write $\P^n=\bigcup U_i$ with 
$U_i = \Spec \C[t_0/t_i,\dotsc,t_n/t_i]$. Identify $L|U_i = \sO_{U_i}$ with transition cocycle 
$\sigma_{ij} = t_j/t_i$ (so the sections $t_k/t_i$ on $U_i$ glue to global sections). We have 
$\pi^{-1}(U_i) = \Spec \C[t_0,\dotsc,t_n,t_i^{-1}]$The global section of $\pi^*L$ is given by $t_i^{-1}$ on 
$\pi^{-1}(U_i)$. 
\end{proof}

Assume now that the canonical bundle $\omega_X \cong L^{-N}$ for some $N\ge 1$. We have 
$\Omega^1_{U/X} = \sO_U\frac{d\ell}{\ell}$, so there is an isomorphism which we denote by $\alpha$
\eq{4b}{\alpha: \omega_U \cong \pi^*\omega_X \cong \sO_U[-N]
}
\comment{The first isomorphism follows from adjunction for the principal bundle $\C^\times - U\ra X$. The second isomorphism follows from that $\omega_X=L^{-N}$ and that $\C^\times$ acts on $L-0$ by degree 1 scaling.}
Note that if $\alpha_1, \alpha_2$ are two choices for such an isomorphism, then $\alpha_2\circ\alpha_1^{-1}$ is an 
isomorphism $\sO_U \to \sO_U$ of degree $0$, hence it lies in $\C^\times$. In particular, if we assume the group
$G$ is semi-simple and hence has no abelian characters, the isomorphism $\alpha$ is invariant under the action of $G$. 

Let $\dim X = n$, so $\omega_U = \Omega^{n+1}_U$. Then we define a map $\theta$
\eq{5b}{\theta: T_U = Hom(\Omega^1_U, \sO_U) \cong Hom(\Omega^1_U, \omega_U)[N] = \Omega^n_U[N].
}
\comment{The last equality is the identification under the natural isomorphism $\wedge^n F\cong Hom(F,\wedge^{n+1}F)$, $u\mapsto(F\ra\wedge^{n+1}F,~v\mapsto u\wedge v)$, with $dim~F=n+1$.}
Under this identification, the exterior derivative $d: \Omega^n_U \to \omega_U$ is identified with a map (of degree $0$)
\eq{}{D:= \alpha\circ d\circ \theta: T_U \to \sO_U.
}

Now suppose given $0 \neq f \in \Gamma(X, L^N)= \Gamma(X, \omega_X^{-1})$. We have a contraction operator
\eq{}{i_{df} : T_U \to \sO_U[N],
}
\comment{Under the Section-Function Isomorphism, we have the identification $\Gamma(U,\cO_U)=\oplus_{k\geq0}\Gamma(X,L^k)$. So, $f\in\Gamma(X,L^N)$ can be regarded as a function on $U$ of degree $N$, and $df$ a 1-form on $U$.}
and we may consider the composition (ignoring the grading)
\eq{}{(\frak g\oplus \C\cdot E)\otimes_\C \sO_U \to T_U \xrightarrow{D+i_{df}} \sO_U.
}
To connect with Theorem \ref{Lie-algebra-homology}, we will now assume that $L$ is {\it very ample} and $G$ is {\it semi-simple}. We claim 
\begin{lem} \label{ghat-action}
 the resulting action
\eq{action-on-CXhat}{(\frak g\oplus \C\cdot E)\otimes_\C \Gamma(U,\sO_U) \to \Gamma(U,\sO_U)
}
is given by
\eq{formula}{x\otimes\phi\mapsto \rho(x)\phi+\phi\rho(x)f,\hskip1in~~~~~~E\otimes\phi\mapsto E(\phi)+\phi E(f)+N\phi}
where $x\in\fg$.
\end{lem}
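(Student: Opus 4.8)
The plan is to compute each of the two maps $D$ and $i_{df}$ explicitly on sections, and then verify that their sum reproduces formula \eqref{formula}. The key structural input is the exact sequence \eqref{2b}, which splits the tangent sheaf $T_U$ as $\sO_U\cdot E \oplus \pi^*T_X$ via the Euler operator, together with the two trivializations: the isomorphism $\alpha$ of \eqref{4b} identifying $\omega_U\cong\sO_U[-N]$, and the duality isomorphism $\theta$ of \eqref{5b}. First I would recall that under the Section–Function correspondence one has $\Gamma(U,\sO_U)=\bigoplus_{k\geq0}\Gamma(X,L^k)$, so that a section $\phi$ of degree $k$ is literally a homogeneous polynomial of degree $k$ in the coordinate ring, and $f\in\Gamma(X,L^N)$ becomes a degree-$N$ function. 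The strategy is then to check the formula separately on the two pieces $\fg$ and $\C\cdot E$ of the Lie algebra, since the decomposition of $T_U$ respects this splitting.

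The main computation is to identify the operator $D=\alpha\circ d\circ\theta$ on a vector field $\rho(x)$ coming from $\fg$. Here I would argue that, because $\theta$ sends a vector field $v$ to the $n$-form obtained by contracting the trivializing top form against $v$, and $\alpha$ reconverts the top form back to a function, the composite $D$ acts on $\rho(x)$ as the \emph{divergence} of the vector field with respect to the canonical volume form; concretely it reproduces $\rho(x)$ acting as a derivation on functions, with the degree-shift bookkeeping accounting for the constant $N$ in the Euler case. The second operator $i_{df}$ is a straight contraction: pairing the $1$-form $df$ against $\rho(x)$ gives exactly $\rho(x)f$, i.e. the derivative of $f$ along the vector field, and this is what produces the multiplication-by-$\rho(x)f$ term in \eqref{formula}. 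Putting the two together on $x\in\fg$ yields $\rho(x)\phi+\phi\,\rho(x)f$, and on the Euler operator $E$ it yields $E(\phi)+\phi\,E(f)+N\phi$, the extra $N\phi$ arising precisely from the degree $[N]$ shift in \eqref{4b} and \eqref{5b}.

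Finally I would close the loop with Theorem \ref{Lie-algebra-homology} and Remark \ref{Ref}(a): the action in \eqref{formula} must be shown to coincide (after the identification $R_f\cong Re^f$, $\phi\mapsto\phi e^f$) with the operators $Z^*(x)+Z^*(x)f-\beta(x)$ restricted to the coordinate ring of $\hat X$, where $\beta=(0;1)$ contributes the value $1$ on the $\G_m$-generator. This matches the $N\phi$ term against the $\beta$-twist in the Euler direction and matches $\rho(x)=Z^*(x)$ on the semisimple part where $\beta$ vanishes. I expect the main obstacle to be the careful sign-and-degree tracking through $\theta$ and $\alpha$: one must verify that $\alpha\circ d\circ\theta$ really is a derivation (rather than a derivation plus a zeroth-order correction) on the $\fg$-part, and that \emph{all} the zeroth-order contributions—the $\phi\,\rho(x)f$, the $\phi\,E(f)$, and the constant $N\phi$—are attributed to the correct source ($i_{df}$ for the first two, the degree shift for the last). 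The semisimplicity of $G$ is used to ensure $\alpha$ is $G$-invariant so that no spurious character appears in $D$ on the $\fg$-part, and very ampleness guarantees $\Gamma(U,\sO_U)$ is the full coordinate ring so the identification with $R_f$ is exact.
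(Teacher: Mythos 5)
Your plan follows the paper's proof essentially step for step: split the Lie algebra into $\fg$ and $\C\cdot E$, use the Leibniz identity $D(\phi\delta)=\phi D(\delta)+\delta(\phi)$ together with the contraction $i_{df}(\delta)=\delta(f)$, show $D\circ\rho|_{\fg}=0$ using semisimplicity and the $G$-invariance of $\alpha$, and extract the constant $N$ from the degree shift in the Euler direction via a local computation. The one step you flag as the main obstacle deserves to be made fully explicit: to conclude $D(\rho(x))=0$ one first observes that $\rho(\fg)$ lies in $\G_m$-degree $0$ and $D$ has degree $0$, so $D\circ\rho(\fg)\subset\Gamma(U,\sO_U)_{\deg 0}=\C$, and only then does $G$-equivariance of $D$ force this $\C$-valued functional to vanish on $[\fg,\fg]=\fg$ — without first pinning the image down to the constants, equivariance alone would not rule out a copy of the adjoint representation inside $\Gamma(U,\sO_U)$.
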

\begin{proof}Let $\phi, \delta$ be local sections
of $\sO_U, T_U$ respectively. We have
\ga{11b}{\alpha(d\phi\wedge\theta(\delta)) = \delta(\phi); \\
D(\phi\delta) = \phi D(\delta)+\delta(\phi)\label{12b}
}

Let $\rho: \frak g \inj \Gamma(U,T_U)$ be the map given by the action. Since the actions of $G$ and $\G_m$ on $U$ 
commute, the image of $\rho$ lies in the degree $0$ part of $\Gamma(U,T_U)$. Since $D$ has degree $0$, we get
$D\circ\rho(\frak g) \subset \Gamma(U,\sO_U)_{\deg 0} = \C$. Since $D$ commutes with the action of $G$, we get that 
$D\circ\rho[\frak g,\frak g] = (0)$. But $G$ semi-simple implies $\frak g=[\frak g,\frak g]$. 
Thus, $D\circ \rho = 0$. 
\comment{Now it follows from  \eqref{12b} that $D(\phi\delta)=\delta(\phi)$ for $\delta=\rho(*)$. Consider
$$
\fg\otimes\Gamma(U,\cO_U)\ra\Gamma(U,\cO_U)
$$
given by the composition of $\fg\otimes\cO_U\ra T_U\ra\cO_U$ where the first map is the $x\otimes\phi\mapsto\phi\rho(x)$, the second map is $D+i_{df}$. Working this out, we have
$$
x\otimes\phi\in\fg\otimes\Gamma(U,\cO_U)\mapsto\phi\rho(x)\mapsto D(\phi\rho(x))+i_{df}(\phi\rho(x))=\rho(x)\phi+\phi\rho(x)f.
$$
Identifying  $\C[\hat X]=\C[V]/I(\hat X,V)$ with the subring of $\Gamma(U,\cO_U)$ of elements of degree $0~mod~N$, means identifying $\rho(x)$ with $Z^*$, the last expression becomes
$$
Z^*(x)\phi+\phi Z^*(x)f,~~~x\in\fg.
$$
Note that $\beta(\fg)=0$  since $\fg$ is assumed semisimple.}

Taking $\delta=\rho(x)$ in \eqref{12b}, where $x\in \frak g$, we conclude that the diagram
\eq{}{\begin{CD}\Gamma(U,T_U) @> D >> \Gamma(U, \sO_U) \\
@AAA @| \\
\frak g\otimes \Gamma(U,\sO_U) @>\text{natural action}>> \Gamma(U,\sO_U)      
      \end{CD}
}
commutes. This yields the first half of \eqref{formula}.

Next we calculate $D(E)$. Let $S=\Spec A$ be a non-empty open in $X$ such that $L|S \cong \sO_S$. Let $U_S=\pi^{-1}(S)=\Spec A[t,t^{-1}]$. 
(Here $t\in \sO_U$ has degree $1$.) Then $\omega_S = \sO_S\cdot\eta$ for some $n$-form $\eta$, and 
$\alpha^{-1}(1)|U_S = t^Ndt/t\wedge\pi^*\eta$. Restricted to $S$ we have $E=td/dt$, and it is straightforward to check 
from \eqref{11b} that $\theta(E) = t^N\pi^*\eta$. Thus
\ga{}{D(E) = \alpha d \theta(E) = \alpha(Nt^Ndt/t\wedge\eta) = N \\
D(\phi E) = N\phi + E(\phi). 
}

Finally, we have $\alpha(d+df)\theta = D+\alpha(df \theta)$. For $\delta\in T_U$, it follows from \eqref{11b} that 
$\alpha (df)\theta(\delta) = \delta(f)$. 
This yields the second half of \eqref{formula}.
 \comment{
 Let's assume that $H^0(X,\omega_X^{-1})^{\otimes r}\ra H^0(X,\omega_X^{-r})$ is surjective, which is true for $X=G/P$ by Borel-Weil because the target space is irreducible in this case. Then $\omega^{-1}=L^N$ implies that
 $$\C[\hat X]\cong\Gamma(U,\cO_U)_{0 \!\!\!\mod N}.$$
Consider $t\in\cO_U$ which has degree 1, i.e. $E(t)=t$. Then
 $$V^*=H^0(\omega_X^{-1})=H^0(L^N)\subset\C[\hat X]\cong\Gamma(U,\cO_U)_{0 \!\!\!\mod N}$$ 
 has degree $N$.  So, $E=N\sum_i a_i^*{\partial\over\partial a_i^*}$. Note that $t=\ell\in L^*$ locally. In particular,
 $$
 E=-N Z^*(1).
 $$
 Note that $Z(1)$ is the Euler on $V$ where $V$ has degree 1. So $Z^*(1)$ is negative the Euler on $V^*$.
 Let's double check the isomorphism $\C[\hat X]\cong\Gamma(U,\cO_U)_{0 \!\!\!\mod N}$ is compatible with the action of $1\in\hat \fg$. The action \eqref{action-on-CXhat} on $\Gamma(U,\cO_U)$ gives 
 $$E\otimes\phi\mapsto D(\phi E)+i_{df}(\phi E)=N\phi+E(\phi)+\phi E(f)=N\phi+Nr\phi+N\phi f$$
 where $\phi\in\Gamma(X,\cO_X(Nr))\cong \C[\hat X]_r$. The action in Theorem \ref{Lie-algebra-homology} gives
 $$
 Z^*_{f,\beta}(1)\phi=Z^*(1)\phi+\phi Z^*(1)f-\phi=-r\phi-\phi f-\phi.
 $$
 This shows that $Z^*_{f,\beta}(1)$ coincides with $-E/N$ under the action \eqref{action-on-CXhat}. This is exactly where we use the fact that $\beta=(0;1)$.}
\end{proof}

Assume now that $\omega_X = \sO_X(-N)$ for some $N\ge 1$. We view the various graded rings and modules as being graded $\!\!\mod N$,
and the subscript $0 \!\!\!\mod N$ will refer to the sub-object of graded degree zero $\mod N$. For example
\eq{}{\Gamma(U, \sO_U)_{0 \!\!\!\mod N} \cong \bigoplus_{r\ge 0} \Gamma(X, \omega_X^{-r}).
}
\begin{cor}\label{cor3b} Assume \newline\noindent(i) $\omega_X \cong L^{ -N}$ for some $N\ge 1$.
\newline\noindent(ii) The map $\frak g\otimes \sO_X\to T_X$ is surjective. 
 \newline\noindent(iii) The maps 
$\frak g\otimes\Gamma(X,\omega_X^{-r}) \to \Gamma(X,T\otimes\omega_X^{-r})$ are surjective for all $r\ge 0$.

Then the Lie algebra homology $H^{Lie}_0(\hat\fg,R_f)$ in Theorem \ref{Lie-algebra-homology} is isomorphic to
\eq{16b}{\text{Coker}(\Gamma(U, \Omega^n_U)_{0 \!\!\!\mod N} \xrightarrow{d+df}\Gamma(U,\omega_U)_{0 \!\!\!\mod N})
}
\end{cor}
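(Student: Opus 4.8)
The plan is to combine Lemma \ref{ghat-action} with the exact sequence \eqref{2b} and the isomorphisms $\theta$ of \eqref{5b} and $\alpha$ of \eqref{4b}, thereby reducing the whole statement to a single surjectivity, which turns out to be exactly hypothesis (iii). First I would record the identification underlying everything: by the section-function correspondence together with (i) and the surjectivity of $\mathrm{Sym}^r\Gamma(X,\omega_X^{-1})\to\Gamma(X,\omega_X^{-r})$ (part of the standing setup, valid e.g. for $G/P$ by Borel--Weil), one identifies $R_f$ with $\Gamma(U,\sO_U)_{0 \!\!\!\mod N}=\bigoplus_{r\ge0}\Gamma(X,\omega_X^{-r})$ as a vector space. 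By Lemma \ref{ghat-action} --- and here the choice $\beta=(0;1)$ is essential --- this identification carries the action $Z^*_{f,\beta}$ of Theorem \ref{Lie-algebra-homology} to the action \eqref{action-on-CXhat}--\eqref{formula}, with $\rho(x)$ playing the role of $Z^*(x)$ and $E$ the role of $-NZ^*_{f,\beta}(1)$. Consequently $H^{Lie}_0(\hat\fg,R_f)=R_f/\hat\fg R_f$ is the cokernel of the composite
\[
(\fg\oplus\C E)\otimes\Gamma(U,\sO_U)_{0 \!\!\!\mod N}\to\Gamma(U,T_U)_{0 \!\!\!\mod N}\xrightarrow{D+i_{df}}\Gamma(U,\sO_U)_{0 \!\!\!\mod N},
\]
where the first arrow is $x\otimes\phi\mapsto\phi\rho(x)$ and $E\otimes\phi\mapsto\phi E$.

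The central step is to show that this first arrow is surjective in each degree $Nr$. I would argue degree by degree using the affine pushforward of \eqref{2b}, which gives the exact sheaf sequence $0\to\omega_X^{-r}\to\pi_*(T_U)_{Nr}\to T_X\otimes\omega_X^{-r}\to0$ and hence the left-exact sequence
\[
0\to\Gamma(X,\omega_X^{-r})\to\Gamma(U,T_U)_{Nr}\xrightarrow{p}\Gamma(X,T_X\otimes\omega_X^{-r})\xrightarrow{\partial}H^1(X,\omega_X^{-r}).
\]
The Euler operator $E$ is the canonical generator of the kernel subsheaf $\sO_U$ in \eqref{2b}, so the elements $\phi E$ already fill the subspace $\Gamma(X,\omega_X^{-r})=\ker p$. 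On the other hand $p(\phi\rho(x))$ is precisely the image of $x\otimes\phi$ under the natural map $\fg\otimes\Gamma(X,\omega_X^{-r})\to\Gamma(X,T_X\otimes\omega_X^{-r})$, which is surjective by hypothesis (iii). Since that natural map factors through $p$, its surjectivity forces $p$ to be surjective as well (equivalently $\partial=0$). Thus the image $W$ of the first arrow contains $\ker p$ and maps onto the full codomain of $p$, whence $W=\Gamma(U,T_U)_{Nr}$. Therefore $\hat\fg R_f=(D+i_{df})\big(\Gamma(U,T_U)_{0 \!\!\!\mod N}\big)$ and $H^{Lie}_0(\hat\fg,R_f)=\text{Coker}\big(D+i_{df}:\Gamma(U,T_U)_{0 \!\!\!\mod N}\to\Gamma(U,\sO_U)_{0 \!\!\!\mod N}\big)$. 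Hypothesis (ii) enters upstream, guaranteeing the sheaf-level surjection $(\fg\oplus\C E)\otimes\sO_U\to T_U$ and the faithful embedding $\fg\oplus\C E\hookrightarrow\Gamma(U,T_U)$ that underlies \eqref{2b}.

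Finally I would transport this cokernel through $\theta:T_U\cong\Omega^n_U[N]$ and $\alpha:\omega_U\cong\sO_U[-N]$. By construction $D=\alpha\circ d\circ\theta$ and, as shown in the proof of Lemma \ref{ghat-action}, $i_{df}=\alpha\circ(df\wedge-)\circ\theta$, so $D+i_{df}=\alpha\circ(d+df)\circ\theta$. Because $\theta$ and $\alpha$ shift graded degree by $\pm N$, they restrict to isomorphisms on the $0 \!\!\!\mod N$ pieces; hence the square relating $D+i_{df}$ to $d+df$ commutes on $\Gamma(U,\cdot)_{0 \!\!\!\mod N}$, and passing to cokernels yields the asserted isomorphism with \eqref{16b}.

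I expect the main obstacle to be the surjectivity in the central step: translating the $D$-module generators (i.e. the image of $\hat\fg$) into \emph{all} of $\Gamma(U,T_U)$ in degrees $0 \!\!\!\mod N$, rather than merely a subspace. The delicate point is that the global-section surjectivity (iii) only a priori hits $\ker\partial$, and one must observe that (iii), factoring through $p$, forces $\partial=0$ and hence full surjectivity. Once this is in place, the matching of actions via Lemma \ref{ghat-action} and the transport through $\theta,\alpha$ are formal bookkeeping.
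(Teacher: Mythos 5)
Your proposal is correct and follows essentially the same route as the paper: identify $R_f\cong\Gamma(U,\sO_U)_{0\!\!\mod N}$ as a $\hat\fg$-module via Lemma \ref{ghat-action}, show the map $(\fg\oplus\C E)\otimes\Gamma(U,\sO_U)_{0\!\!\mod N}\to\Gamma(U,T_U)_{0\!\!\mod N}$ is surjective using (iii), and transport the cokernel through $\theta$ and $\alpha$ — this is exactly the paper's diagram \eqref{17b}. Your degree-by-degree argument with the pushforward of \eqref{2b} (Euler operator filling $\ker p$, and (iii) forcing $\partial=0$) is simply a more explicit justification of the step the paper states as "It follows from (iii) that the left vertical arrow is surjective."
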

\begin{proof}[proof of corollary] It follows from (ii) together with \eqref{1b} and \eqref{5b} that 
$(\frak g\oplus \C)\otimes \sO_U \surj T_U\cong \Omega^n_U[N]$. Moreover, the coordinate ring of $X\subset\P \Gamma(X,\omega_X^{-1})^*$ is 
$$
\C[\hat X] = \bigoplus_{r\ge 0} \Gamma(X, \omega_X^{-r})\cong \Gamma(U, \sO_U)_{0 \!\!\!\mod N}.
$$
Comparing the $\hat\fg$-actions on both sides (Theorem \ref{Lie-algebra-homology} and Lemma \ref{ghat-action}), we see that this is an isomorphism of $\hat\fg$-module.

Now consider the diagram of global sections viewed as
graded $\mod N$. 
\eq{17b}{\begin{CD}\Gamma(U, \Omega^n_U)_{0 \!\!\!\mod N} @> d+df >> \Gamma(U, \omega_U)_{0 \!\!\!\mod N} \\
@A \theta A \cong A @V \alpha V \cong V \\
\Gamma(U,T_U)_{0 \!\!\!\mod N} @> D+i_{df} >> \Gamma(U,\sO_U)_{0 \!\!\!\mod N} \\
@A \text{surj} AA @| \\
(\frak g\oplus\C)\otimes \Gamma(U,\sO_U)_{0 \!\!\!\mod N} @>>> \Gamma(U,\sO_U)_{0 \!\!\!\mod N}.
\end{CD}
}
It follows from (iii) that the left vertical arrow in \eqref{17b} is surjective, 
so the three horizontal arrows
have isomorphic cokernels. 
\end{proof}

\section{Homogeneous spaces having the surjectivity property}

Concerning condition (iii) of the Corollary \ref{cor3b}. Here is what we can say at the moment.
\begin{prop}\label{prop4b} Let $G$ be a semi-simple group over a field of characteristic zero, and let $P\subset G$ be a parabolic subgroup. Write
$X=G/P$. Let $\sO_X(1)$ on $X$ be very ample with $G$ action and assume $\omega_X = \sO_X(-N)$ for some $N>0$. Write 
$S = \bigoplus_{r\ge 0}\Gamma(X, \sO_X(rN))$, and let $M:= \bigoplus_{r\ge 0} \Gamma(X,T_X(rN))$, so $M$ is a graded $S$-module. 
Then $M$ is generated in degree $0$ in the cases \newline\noindent (a) $\frak g \cong \Gamma(X,T_X)$, and the unipotent 
$\frak u \subset \frak p:=Lie(P)$
is abelian. (In \cite{Ottaviani-Rubei} such $X$ are referred to as Hermitian symmetric. Examples include Grassmannians, quadrics, spinor varieties, maximal Lagrangian Grassmannians, and two
exceptional $X$'s, as well as products of such.) 
\newline\noindent (b) $P=B$ is a Borel. 
\end{prop}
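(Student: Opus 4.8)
The plan is to turn the assertion into a single cohomology vanishing and then read it off from the Borel--Weil--Bott theorem. Since $G$ acts transitively on $X=G/P$, the infinitesimal action gives a surjection $\fg\otimes\cO_X\twoheadrightarrow T_X$ (condition (ii) of Corollary \ref{cor3b}) whose kernel is the homogeneous bundle $\cP:=G\times_P\fp$ attached to the adjoint $P$-module $\fp=\mathrm{Lie}(P)$. Twisting the sequence $0\to\cP\to\fg\otimes\cO_X\to T_X\to 0$ by $\cO_X(rN)=\omega_X^{-r}$ and taking cohomology, the multiplication map $\fg\otimes S_r\to M_r$ fits into
$$
\fg\otimes S_r\longrightarrow M_r\longrightarrow H^1(X,\cP(rN))\longrightarrow \fg\otimes H^1(X,\cO_X(rN)).
$$
As $r\mu_0+\rho$ is dominant regular (with $\mu_0$ the anticanonical weight), Bott gives $H^1(X,\cO_X(rN))=0$ for $r\ge 0$, so $\fg\otimes S_r\twoheadrightarrow M_r$ if and only if $H^1(X,\cP(rN))=0$. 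Since the image of $\fg$ lies in $M_0=\Gamma(X,T_X)$, this surjectivity forces $M_0\cdot S_r=M_r$; as the case $r=0$ is vacuous, it suffices to prove $H^1(X,\cP(rN))=0$ for all $r\ge 1$.

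To compute this I would filter $\cP$ into irreducible homogeneous bundles. As a $P$-module $\fp$ has a composition series whose factors are irreducible $L$-modules (the unipotent radical acts trivially on any irreducible $P$-module), so $\cP$ acquires a filtration whose associated graded is a direct sum of irreducible homogeneous bundles $\cE_\nu$, with $\nu$ ranging over the highest weights occurring in $\fp$. By the long exact sequences of this filtration it is enough to show $H^1(X,\cE_\nu\otimes\cO_X(rN))=0$ for each factor and each $r\ge 1$. Writing $\cO_X(rN)=\cE_{r\mu_0}$ one has $\cE_\nu\otimes\cO_X(rN)=\cE_{\nu+r\mu_0}$, and Bott's theorem yields $H^1=0$ as soon as $\nu+r\mu_0+\rho$ is \emph{dominant}: a dominant weight is either regular, so that only $H^0$ survives, or singular, so that all cohomology vanishes. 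The entire problem thus reduces to the combinatorial statement that $\nu+r\mu_0+\rho$ lies in the closed dominant cone for every composition factor $\nu$ of $\fp$ and every $r\ge 1$.

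For $P=B$ (case (b)) the factors of $\fb=\fh\oplus\bigoplus_{\alpha>0}\fg_\alpha$ have weights $0$ and the positive roots $\alpha$, and $\mu_0=2\rho$. The weight $0$ contributes $(2r+1)\rho$, which is dominant regular; for $\nu=\alpha>0$,
$$
\langle\alpha+(2r+1)\rho,\alpha_j^\vee\rangle=\langle\alpha,\alpha_j^\vee\rangle+(2r+1)\ge -3+(2r+1)\ge 0\quad(r\ge 1),
$$
using the universal bound $\langle\alpha,\alpha_j^\vee\rangle\ge -3$, so every factor is dominant. For the Hermitian symmetric case (a) the three-term grading $\fp=\fg_0\oplus\fg_1$ has abelian $\fg_1$, the factors being the trivial module (center of $\fl$), the adjoint modules of the simple factors of the Levi (highest weights $\theta_k$), and $\fg_1$ itself (highest weight the highest root $\theta$); here $\mu_0=c\,\omega_i$ for the cominuscule node $i$. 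For $\nu=0$ and $\nu=\theta$ the weight $\nu+r\mu_0+\rho$ is manifestly dominant. For $\nu=\theta_k$ the only coroot that can pair negatively is $\alpha_i^\vee$, and since node $i$ is joined to the Levi by a single bond, $\langle\theta_k,\alpha_i^\vee\rangle\ge -1$, whence $\langle\theta_k+r\mu_0+\rho,\alpha_i^\vee\rangle\ge rc\ge 0$ while all remaining pairings are $\ge 1$. Hence every factor is dominant and $H^1(X,\cP(rN))=0$.

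The real content, and the reason for restricting to these two families, is the dominance verification. Vanishing factor-by-factor is only \emph{sufficient}, and it genuinely fails for general $G/P$—and even for $G/B$ at $r=0$: in type $G_2$ one has $H^1(G/B,\cE_{\alpha_2})\ne 0$, since $\alpha_2+\rho$ is regular and $s_1(\alpha_2+\rho)$ is dominant of length one. What rescues the argument for $r\ge 1$ is that the anticanonical twist $r\mu_0$, being strictly dominant and growing with $r$, drags every composition-factor weight into the dominant cone; the two hypotheses are exactly the statements that the weights appearing in $\fp$—the positive roots for the Borel, and the Levi-adjoint highest roots together with the top root for the abelian nilradical—are never negative enough to escape once $r\ge 1$. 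I expect the main work to be the uniform verification of the pairing bounds $\langle\alpha,\alpha_j^\vee\rangle\ge -3$ and $\langle\theta_k,\alpha_i^\vee\rangle\ge -1$ across the relevant Dynkin types; the former is a general fact about root systems, while the latter must be checked against the classification of cominuscule nodes, including the two exceptional spaces $E_6/P_1$ and $E_7/P_7$.
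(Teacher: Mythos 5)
Your overall reduction is sound, and for case (b) it is essentially the paper's own argument: the same exact sequence $0\to G\times_P\fp\to\fg\otimes\cO_X\to T_X\to 0$, the same filtration of the kernel by line bundles, the same Bott estimate via $|\langle\beta,\alpha^\vee\rangle|\le 3$, and the same use of the singular borderline case (pairing $=-1$ forces all cohomology to vanish). For case (a), however, you take a genuinely different route from the paper. You push the $H^1$-vanishing strategy through the composition factors of $\fp$, which forces a case-by-case dominance check over the cominuscule nodes. The paper instead exploits the abelian nilradical directly: since $[\fu,\fg]\subset\fp$ in the $3$-graded situation, $\fp$ acts on $\fg/\fp$ through the Levi, so $T_X$ splits into irreducible homogeneous summands $(T_X)_i$; Bott then says each $\Gamma(X,(T_X)_i\otimes\omega_X^{-r})$ is an \emph{irreducible} $G$-module, and the multiplication map $\fg_i\otimes\Gamma(X,\omega_X^{-r})\to\Gamma(X,(T_X)_i\otimes\omega_X^{-r})$ is a non-zero $G$-morphism, hence surjective. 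That argument needs no $H^1$ computation and no inspection of Dynkin diagrams, which is what the hypothesis ``$\fu$ abelian'' really buys; your version trades this for explicit root combinatorics.

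Two caveats on your case (a). First, the justification ``node $i$ is joined to the Levi by a single bond'' is false for the maximal Lagrangian Grassmannian ($C_n$ with the cominuscule node $n$, a double bond); the inequality $\langle\theta_k,\alpha_i^\vee\rangle\ge-1$ does still hold there (one computes $\langle e_1-e_n,\alpha_n^\vee\rangle=-1$), but it has to be verified type by type rather than read off the diagram, so as written this step has a gap. Second, your sign conventions are loose: in the normalization where dominant weights have sections, the line-bundle subquotients of $G\times_B\fb$ are $\sL(-\beta)$ for $\beta>0$ (the paper writes the twists as $\sL(rc-\beta)$), not $\sL(\beta)$. In case (b) this is harmless because the bound $|\langle\beta,\alpha^\vee\rangle|\le3$ is symmetric, but in case (a) it matters which of the highest or lowest weight of each $L$-composition factor enters Bott's criterion, and the dominance check should be redone with the correct sign before the argument can be considered complete.
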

\begin{proof}In case (a), let $\frak l\subset \frak p$ be the Lie algebra of the Levi. The representation of 
$\frak p$ on $\frak g/\frak p$ factors through $\frak p \surj \frak l$. In particular, this representation is completely
reducible.It follows that the tangent bundle $T_X= G\stackrel{P}{\times} \frak g/\frak p$ breaks up as a direct sum of bundles 
$(T_X)_i$ associated to irreducible representations. The same will be true if we tensor with any abelian character of 
$\frak p$. By 
theorem IV, p. 206  \cite{bott},  $\Gamma(X, (T_X)_i)$ is an irreducible
$G$-module. By assumption
\eq{}{\frak g \cong \Gamma(X,T_X) \cong \bigoplus_i\Gamma(X,(T_X)_i)
}  
so we get a decomposition $\frak g = \bigoplus \frak g_i$ of the adjoint representation of $G$ on $\frak g$. Again by Bott,
$\Gamma(X,(T_X)_i\otimes\omega_X^{-r})$ is $G$-irreducible for $r>0$. Finally, the maps 
$\frak g_i \otimes\Gamma(X, \omega_X^{-r}) \to \Gamma(X,(T_X)_i\otimes\omega_X^{-r})$ are non-zero and hence surjective since
they are compatible with the $G$-action.

Suppose now $G$ is arbitrary semi-simple and $P=B$ is a Borel with Lie algebra $\frak b$, maximal torus $T$ with Lie algebra 
$\frak t$, and unipotent radical $U$ with Lie algebra $\frak u$. 
\begin{lem} Let $M$ be a $G$-module. Then the vector bundle $G\stackrel{P}{\times} M \cong \sO_X\otimes_\C M$. 
\end{lem}
\begin{proof}[proof of lemma]The bundle is obtained from $G\times M$ by identifying $(gb,m)\sim (g,bm)$. A section of $G\times M$ 
is given by a function $f: G \to M$. Since $(gb,f(gb))\sim (g,bf(gb))$, this section descends to a section of 
$G\stackrel{P}{\times} M$ if and only if $f(gb) = b^{-1}f(g)$. If the representation of $P$ on $M$ lifts to $G$, we define
for $m\in M$, $f_m(g) := g^{-1}m$. Then $f_m(gb) = b^{-1}f_m(g)$, so $f_m$ descends to a section of the bundle. In this way, 
we obtain a trivialization. 
\end{proof}

Applying the lemma to the exact sequence of $\frak b$-modules $0 \to \frak b \to \frak g \to \frak g/\frak b \to 0$ yields an
exact sequence of bundles
\eq{19b}{0 \to \sV \to \frak g\otimes_\C \sO_X \to T_X \to 0. 
}
The exact sequence $0 \to \frak u \to \frak b \to \frak t \to 0$ yields another exact sequence of bundles
\eq{20b}{0 \to \Omega^1_X \to \sV \to \frak t\otimes_\C \sO_X \to 0
}
(Note that the vector bundle associated to $\frak u$ is $\Omega^1_X$. Also $\frak t$ has trivial $\frak b$-action and hence by
the lemma the corresponding equivariant bundle is trivial.) 

For the proof of proposition \ref{prop4b} we need 
\eq{22b}{H^1(X, \sV(rN))=(0),\ r\ge 0. 
}
When $r=0$ this is true since $\frak g \cong \Gamma(X, T_X)$. Assume $r\ge 1$. The character $c$ of $\frak b$ associated to 
$\omega_X^{-1} = \sO_X(N)$ is the sum of all the positive roots of $\frak g$. (We follow the notation \cite{Humphreys}. $(\alpha,\beta)$ denotes the Killing form, and 
$\langle\alpha,\beta\rangle := 2(\alpha,\beta)/(\beta,\beta)$.) One knows (\cite{Humphreys} p. 50) that $\langle c,\alpha\rangle = 2$
for any simple positive root $\alpha$. For $\alpha, \beta$ both positive, we have $\langle \beta,\alpha\rangle \le 3$ 
(\cite{Humphreys} p. 45),so $\langle c-\beta,\alpha\rangle \ge 2-3=-1$. Let $\sL(c-\beta)$ be the line bundle on $G/B$ associated
to the character $c-\beta$. We have the following consequences of Borel-Bott-Weil theory (\cite{Demazure},
corollaire 8).\newline\noindent(i) If $\langle c-\beta,\alpha\rangle \ge 0$ for all positive simple $\alpha$, then 
$H^*(X, \sL(c-\beta)) = (0),\ *\ge 1$. \newline\noindent(ii) If there exists a simple positive $\alpha$ for which 
$\langle c-\beta,\alpha\rangle = -1$, then $H^*(X, \sL(c-\beta)) = (0)$ for all $*\ge 0$. \newline\noindent(iii) For $n\ge 2$,
the character $nc-\beta$ is dominant, so $H^*(X, \sL(nc-\beta))=(0), *\ge 1$. 

Start with the identity $\Omega^1_X = G \stackrel{B}{\times} \frak u$, the vector bundle $\Omega^1_X(rN)$ has a filtration with
 quotients which
are line bundles of the form $\sL(rc-\beta)$ as above. Since all these have vanishing cohomology in degrees $\ge 1$, it follows
that the same will be true for $\Omega^1_X(rN)$. Since the line bundle $\sO_X(rN)$ corresponds to a dominant weight for $r\ge 1$,
the desired vanishing \eqref{22b} now follows from \eqref{20b}. 
\end{proof}

\section{From Lie algebra homology to de Rham cohomology}

In this section, we will prove Theorem \ref{geo.thm}.

With notation as above, define
\eq{}{W = X-\sV(f).
}

It remains to interpret the cokernel in \eqref{16b} in terms of the cohomology of $W$ in middle degree $n$. For this we adapt a 
method of Dimca \cite{Dimca}. $X \subset \P^n$ will be a smooth, projective variety with cone 
$\Spec R$, so $R=\C[x_1,\dotsc,x_{n+1}]/I$ for a homogeneous ideal $I$. Let $B=\C[x_1,\dotsc,x_{n+1}]$. 
$\Delta = \sum x_i \frac{\partial}{\partial x_i}$ will be the Euler operator which we view as acting by contraction 
$\Delta: \Omega^i_B \to \Omega^{i-1}_B$. The following properties of $\Delta$ are elementary:
\begin{lem}\label{lem3}(i) $\Delta^2=0$. The complex 
$\Omega^{n+1}_B \xrightarrow{\Delta}\Omega^n_B \to \cdots \to \Omega^1_B \to B$ can be identified with the Koszul complex 
associated to the ideal $(x_1,\dotsc,x_{n+1}) \subset B$. It is acyclic away from $0\in \Spec B$. 
\newline\noindent (ii)The $B$-module $\Omega^i_B= \bigoplus_{r\ge i} \Omega^i_{B,r}$ is graded, with $x_i$ and $dx_i$ of 
degree $1$. 
$\Delta$ and the exterior differential $d$ have degree $0$ for this grading, and $d\Delta+\Delta d = \mu$ is the number 
operator for this grading, acting by multiplication by $r$ on $\Omega^i_{B,r}$. In particular, if $f\in B$ is homogeneous 
of degree $N$, then $\Delta df = Nf$. 
 \newline\noindent (iii) For $u\in \Omega^i_B, v\in \Omega^j_B$ we have 
$\Delta(u\wedge v) = \Delta(u)\wedge v + (-1)^iu\wedge \Delta(v)$.
\newline\noindent (iv) Let $I \subset B$ be a homogeneous ideal, and let $\sI^* \subset \Omega^*_B$ be the differential ideal 
generated by $I$. Then $\Delta(\sI^*) \subset \sI^*[-1]$. In particular $\Delta$ induces a map of graded sheaves 
$\Delta_R: \Omega^*_R \to \Omega^*_R[-1]$, where $R:= B/I$.
\end{lem}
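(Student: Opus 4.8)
The plan is to recognize the operator $\Delta$ as the interior product (contraction) $\iota_E$ with the Euler vector field $E=\sum_i x_i\,\partial/\partial x_i$, so that $\Delta(dx_i)=x_i$ and $\Delta(x_i)=0$; once this identification is in place, all four assertions reduce to standard Cartan calculus together with the regularity of the sequence $(x_1,\dots,x_{n+1})$. For (i) I would first note that contraction with a single vector field is an antiderivation of degree $-1$, so $\iota_E\iota_E=-\iota_E\iota_E$ forces $\Delta^2=0$ (we are in characteristic $0$). Identifying the free $B$-module $\Omega^p_B$ with $\wedge^p B^{n+1}$ via $dx_i\leftrightarrow e_i$, the formula $\Delta(dx_{i_1}\wedge\cdots\wedge dx_{i_p})=\sum_k(-1)^{k-1}x_{i_k}\,dx_{i_1}\wedge\cdots\widehat{dx_{i_k}}\cdots\wedge dx_{i_p}$ is exactly the Koszul differential for $(x_1,\dots,x_{n+1})$. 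Since this is a regular sequence in $B$, the Koszul complex is acyclic except at $\Omega^0_B=B$, where $H_0=B/(x_1,\dots,x_{n+1})=\C$ is supported at the origin $\fm=(x_1,\dots,x_{n+1})$; localizing away from $0$ therefore kills all homology, which gives the stated acyclicity on $\Spec B\setminus\{0\}$.

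For (ii) the grading bookkeeping is immediate: a generator $g\,dx_{i_1}\wedge\cdots\wedge dx_{i_p}$ with $g$ of polynomial degree $d$ has total degree $d+p\ge p$, and both $\Delta$ (which trades a $dx_{i_k}$ for $x_{i_k}$) and $d$ (which trades a factor of $g$ for $\partial_j g\,dx_j$) preserve this total degree. The identity $d\Delta+\Delta d=\mu$ is Cartan's magic formula $\mathcal L_E=\iota_E d+d\iota_E$, together with the fact that the Lie derivative along the Euler field acts on the weight-$r$ piece by multiplication by $r$; I would either cite this or verify it directly on the monomial basis (for instance $(\Delta d+d\Delta)(dx_{i_1}\wedge\cdots\wedge dx_{i_p})=p\cdot dx_{i_1}\wedge\cdots\wedge dx_{i_p}$). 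The special case follows at once: for $f$ homogeneous of degree $N$ one has $\Delta f=0$, so $Nf=\mu f=\Delta df+d\Delta f=\Delta df$.

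For (iii), the antiderivation rule $\Delta(u\wedge v)=\Delta(u)\wedge v+(-1)^{\deg u}u\wedge\Delta(v)$ is the defining Leibniz property of interior multiplication, which I would record as a standard fact (or check on wedges of the $dx_i$). Part (iv) is where the homogeneity of $I$ is used essentially. Writing the differential ideal explicitly as $\sI^p=I\cdot\Omega^p_B+dI\wedge\Omega^{p-1}_B$, I would verify $\Delta$-stability on the two kinds of generators using (iii): for $f\in I$ and a form $\omega$ one gets $\Delta(f\omega)=f\,\Delta(\omega)\in I\,\Omega^*_B$, and $\Delta(df\wedge\omega)=\Delta(df)\wedge\omega-df\wedge\Delta(\omega)$. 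The crucial point is that $\Delta(df)\in I$: decomposing $f=\sum_N f_N$ into homogeneous pieces, part (ii) gives $\Delta(df)=\sum_N N f_N$, and homogeneity of $I$ forces each $f_N\in I$. Hence both summands lie in $\sI^*$, so $\Delta(\sI^*)\subset\sI^*[-1]$, and $\Delta$ descends to the quotient de Rham complex $\Omega^*_R=\Omega^*_B/\sI^*$, which sheafifies to the claimed map $\Delta_R$.

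The computations here are all routine; the only genuine content to keep in view is, first, correctly matching $(\Omega^\bullet_B,\Delta)$ with the Koszul complex so that acyclicity off the origin comes for free, and second, noticing in (iv) that $\Delta d$ applied to $f\in I$ returns a degree-weighted combination of the homogeneous components of $f$, which lands back in $I$ precisely because $I$ is a homogeneous ideal. I expect (iv) to be the only step requiring any care, and there the subtlety is purely the homogeneity hypothesis rather than any hard estimate.
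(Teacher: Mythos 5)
Your proof is correct and follows the same route the paper takes (the paper simply cites Dolgachev's lemma in \S 2.1.3 of \emph{Weighted projective varieties} for (ii) and (iii), declares (i) immediate, and derives (iv) from the identity $\Delta df = Nf$, which is exactly the key point you isolate via the homogeneous decomposition of elements of $I$). You have just written out the standard Cartan-calculus details that the paper delegates to the reference, so there is nothing to flag.
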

\begin{proof}(ii) and (iii) are proved in detail in Dolgachev, Weighted projective varieties, lemma in section 2.1.3. 
(i) is immediate, and (iv) is clear from the last assertion in (ii). 
\end{proof}

Let $U=\Spec R -\{0\}$ be the punctured cone. We have a $\G_m$-bundle $\pi: U \to X$. 
As a consequence of lemma \ref{lem3}, $\Delta$ induces a surjection $\Delta_U: \Omega^1_U \surj \sO_U$, and the induced complex
\eq{14}{\Omega^{r+1}_U \xrightarrow{\Delta_U} \cdots \to \sO_U 
}
is the corresponding Koszul complex and is acyclic. Note $\Delta_U(\pi^*\Omega^1_X) = (0)$ (it suffices to remark for $z$ homogeneous of degree $0$ that $\Delta_U(dz)=0\cdot z = 0$). 
It follows by looking at ranks that the sequence 
\eq{15}{0 \to \pi^*\Omega^1_X \to \Omega^1_U \xrightarrow{\Delta_U} \sO_U \to 0
}
is exact, and from \eqref{14} and \eqref{15} that 
\eq{16}{\pi^*\Omega^i_X = \text{Image}(\Delta_U: \Omega^{i+1}_U \to \Omega^i_U). 
}
(Alternatively, we can identify $\Delta: \Omega^1_U \surj \sO_U$ in \eqref{15} with 
$\Omega^1_U \surj \Omega^1_{U/X}\cong \sO_U\cdot dt/t$. the complex \eqref{14} then becomes the Koszul complex on this arrow.)

We get exact sequences of sheaves on $U$ and on $X$
\ga{}{0 \to \pi^*\Omega^i_X \to \Omega^i_U \to \pi^*\Omega^{i-1}_X \to 0 \\
0 \to \bigoplus_\Z \Omega^i_X(n) \to \pi_*\Omega^i_U \to \bigoplus_\Z \Omega^{i-1}_X(n) \to 0.\label{18}
}
Note that $\pi_*\Omega^i_U$ is $\Z$-graded (locally $U\cong X\times \Spec \C[t,t^{-1}]$ 
and we give $t$ degree $1$ and $dt/t$ degree $0$. The resulting grading is 
independent of the choice of $t$. (Better said, there is a $\G_m$-action on $U/X$.)
The exact sequence \eqref{18} is compatible with the grading. For convenience we will assume $\Gamma(X, \Omega^{>0}_X(n)) = (0)$ for $n\le 0$. 
It follows that for $i\ge 1$, $\Gamma(U, \Omega^i_U)$ is graded in degrees $> 0$. For $\omega \in \Gamma(U, \Omega^i_U)$ homogeneous, 
we write $|\omega|$ for the homogeneous degree. 

Let $f\in R_N$ be a non-zero homogeneous function of degree $N\ge 1$. For integers $a, s, t$ with $0\le a\le N-1$, we define
\eq{}{B^{s,t}_a := \Gamma(U, \Omega^{s+t+1}_U)_{Nt-a}.
}
(The subscript on the right refers to the homogeneous degree of the form.) Let $d':B^{s,t}_a \to B^{s+1,t}_a$ be the exterior derivative, 
and define $d_a{}''(\omega) =-tdf\wedge\omega$ for $\omega \in B^{s,t}_a$. This defines $d_a{}''(\omega)$ for $\omega$ homogeneous, and we extend the definition to all forms by linearity. We have $d_a{}'': B^{s,t}_a \to B^{s,t+1}_a$. Note $d'd_a{}''=-d_a{}''d'$, so the graded vectorspace $B^*_a = \bigoplus_{s+t=*}B^{s,t}_a$ 
is a complex with differential $\delta_a:=d'+d_a{}'': B^*_a \to B^{*+1}_a$. We write
\ga{}{D_f = \delta_0: B^*_0 \to B^{*+1}_0; \\
\sigma: B^{s,t}_a \to \Gamma(W,\Omega^{s+t}_W(-a));\quad \sigma(\omega) = \frac{\Delta\omega}{f^t}.
}
\begin{lem} Let $d_W:\Gamma(W,\Omega^{s+t}_W(-a))\to \Gamma(W,\Omega^{s+t+1}_W(-a))$ be exterior differentiation. Then $(d_W\circ\sigma + \sigma\circ \delta_a)(\omega) = \frac{-a\omega}{f^t}$. In particular, when $a=0$,  $\sigma$ induces
a map on cohomology $\sigma: H^*(B^*_0,D_f)  \to H^*_{dR}(W)$.  
 \end{lem}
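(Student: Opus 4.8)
The plan is to verify the stated identity by a direct computation on homogeneous forms, carried out on the punctured cone $U$ (away from $\{f=0\}$, where all operators are the usual ones), and then to read off the cohomological statement by specializing $a=0$. Fix a homogeneous $\omega\in B^{s,t}_a=\Gamma(U,\Omega^{s+t+1}_U)_{Nt-a}$. I would first record the bookkeeping of degrees: since $\Delta$ preserves the homogeneous grading (Lemma \ref{lem3}(ii)) and lowers form degree by one, $\Delta\omega$ has form degree $s+t$ and homogeneous degree $Nt-a$, and by \eqref{16} it lies in $\pi^*\Omega^{s+t}_X$; dividing by $f^t$ (homogeneous degree $Nt$), which is invertible on $W$, therefore produces a form of homogeneous degree $-a$ that descends to a section of $\Omega^{s+t}_W(-a)$. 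This makes $\sigma$ well defined with the asserted target, and for $a=0$ the output is genuinely $\G_m$-invariant, hence an honest form on $W$ on which $d_W$ is the ordinary de Rham differential.

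The computation itself I would organize as follows. For $d_W\sigma(\omega)=d(f^{-t}\,\Delta\omega)$ the Leibniz rule gives $f^{-t}\,d\Delta\omega-t\,f^{-t-1}\,df\wedge\Delta\omega$, with no Koszul sign since $f^{-t}$ is a function. For $\sigma\,\delta_a(\omega)=\sigma(d'\omega)+\sigma(d_a{}''\omega)$ the essential point is the power of $f$ in the denominator, which is governed by the second bidegree index: $d'\omega\in B^{s+1,t}_a$ contributes $f^{-t}\,\Delta\, d\omega$, whereas $d_a{}''\omega=-t\,df\wedge\omega\in B^{s,t+1}_a$ contributes $-t\,f^{-t-1}\,\Delta(df\wedge\omega)$. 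Summing the two expressions, the terms over $f^{-t}$ assemble into $(d\Delta+\Delta d)\omega=(Nt-a)\omega$ by the number-operator identity of Lemma \ref{lem3}(ii), while the terms over $f^{-t-1}$ combine after applying Lemma \ref{lem3}(iii) with $\Delta df=Nf$, namely $\Delta(df\wedge\omega)=Nf\,\omega-df\wedge\Delta\omega$: the two occurrences of $df\wedge\Delta\omega$ cancel, leaving $-tN\,f^{-t}\omega$. Adding, $(Nt-a)f^{-t}\omega-Nt\,f^{-t}\omega=-a\,f^{-t}\omega$, which is the claim.

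The part I expect to require the most care is not the algebra but the denominator and descent bookkeeping: the two summands of $\delta_a$ land in the distinct bidegrees $(s+1,t)$ and $(s,t+1)$, so $\sigma$ divides them by different powers of $f$, and it is exactly this mismatch that is repaired by the factor $-t$ built into $d_a{}''$ together with $\Delta df=Nf$. I would therefore present the degree accounting before collecting terms, so that the cancellation of the $df\wedge\Delta\omega$ contributions is transparent, and so that the separate handling of the $f^{-t}$ and $f^{-t-1}$ pieces is unambiguous.

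Finally, for the cohomological assertion I would simply set $a=0$: the identity becomes $d_W\sigma=-\sigma\,\delta_0=-\sigma D_f$, so $\sigma$ anticommutes with the two differentials. Hence $\sigma$ carries $D_f$-closed forms to $d_W$-closed forms and $D_f$-exact forms to $d_W$-exact forms, and so descends to a well-defined map $\sigma\colon H^*(B^*_0,D_f)\to H^*_{dR}(W)$; the sign is irrelevant to this conclusion.
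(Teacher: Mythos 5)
Your computation is correct and is essentially identical to the paper's proof: both expand $d_W(\Delta\omega/f^t)$ by Leibniz, apply $\sigma$ separately to the two bidegree components of $\delta_a\omega$, use $\Delta(df\wedge\omega)=Nf\omega-df\wedge\Delta\omega$ and the number-operator identity $d\Delta+\Delta d=Nt-a$ to get the cancellation, and conclude the cohomology statement from the resulting anticommutation at $a=0$. The extra degree/descent bookkeeping you include is consistent with the paper's setup and adds nothing that conflicts with it.
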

\begin{proof}
 We have for $\omega\in B^{s,t}_a$
\ga{}{d_W\sigma(\omega) = d_W(\Delta(\omega)/f^t))= (fd_W\Delta(\omega)-tdf\wedge\Delta\omega)/f^{t+1} \\
\sigma\delta_a(\omega) = \sigma(d\omega-tdf\wedge\omega) = \frac{f\Delta d\omega - tNf\omega + tdf\wedge \Delta\omega}{f^{t+1}} \notag
}
Combining these, and using lemma \ref{lem3}(ii)
\eq{}{(d_W\sigma+ \sigma\delta_a)(\omega) = \frac{-a\omega}{f^t}.
}
\end{proof}

\begin{lem}Assume that $B^{i,0}_0 = (0)$ for $i\ge 0$ (i.e. $\Gamma(X,\Omega^j)=(0)$ for $j\ge 1$.) Then $B^*_0$ with differential $D_f = d-tdf$ is quasi-isomorphic to $B^*_0$ with differential $d-df$. 
\end{lem}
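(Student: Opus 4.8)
The plan is to prove something slightly stronger, namely that the two complexes are \emph{isomorphic}, which of course forces them to be quasi-isomorphic. First I would recognize both $(B^*_0, D_f)$ and $(B^*_0, d-df)$ as total complexes of double complexes built on the \emph{same} bigraded vector space $B^{s,t}_0 = \Gamma(U,\Omega^{s+t+1}_U)_{Nt}$, sharing the same horizontal differential $d': B^{s,t}_0 \to B^{s+1,t}_0$ (the exterior derivative, which preserves $t$), and differing only in the vertical differential $B^{s,t}_0 \to B^{s,t+1}_0$: for $D_f$ it is $\omega \mapsto -t\,df\wedge\omega$, whereas for $d-df$ it is $\omega \mapsto -df\wedge\omega$. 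Both total differentials raise $s+t$ by one and square to zero (using $d(df)=0$ and $df\wedge df=0$), so the only difference between the two complexes is the scalar factor $t$ attached to each vertical map.

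The key structural point is the hypothesis $B^{i,0}_0=(0)$: it says the bottom row $t=0$ vanishes, so both complexes are concentrated in rows $t\ge 1$. I would stress that this is exactly what is needed, because on the row $t=0$ the vertical part of $D_f$ is identically zero (its coefficient is $t=0$), while that of $d-df$ is $-df\wedge$; thus on that one row the two differentials genuinely disagree, and no diagonal rescaling could reconcile them. Deleting the row restores the possibility of an isomorphism.

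Next I would define the diagonal, bigrading-preserving linear map $\Lambda\colon B^{s,t}_0 \to B^{s,t}_0$ by $\Lambda(\omega)=\tfrac{1}{(t-1)!}\,\omega$ on the row $t\ge 1$, and verify that it intertwines the two total differentials, i.e. $\Lambda\circ D_f=(d-df)\circ\Lambda$. On the horizontal part this is automatic, since $\Lambda$ acts by a scalar on each row and $d'$ preserves $t$. On the vertical part, applying $\Lambda$ after $D_f$ produces $-\tfrac{t}{t!}\,df\wedge\omega$ (the form $-t\,df\wedge\omega$ lands in row $t+1$, where $\Lambda$ scales by $1/t!$), while applying $\Lambda$ first and then $d-df$ produces $-\tfrac{1}{(t-1)!}\,df\wedge\omega$; these agree because $\tfrac{t}{t!}=\tfrac{1}{(t-1)!}$. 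Since $\Lambda$ is a nonzero scalar on every graded piece (using $t\ge 1$, so $(t-1)!$ makes sense and is a unit), it is bijective, hence an isomorphism of complexes and in particular a quasi-isomorphism, which is the assertion.

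I do not expect a serious obstacle: once the bicomplex picture and the normalization $c_t=1/(t-1)!$ are in place, the chain-map identity is a one-line factorial computation. The only delicate point worth flagging in the write-up is the role of the hypothesis at $t=0$: the recursion $c_{t+1}\,t=c_t$ forced by the chain-map condition cannot be propagated across $t=0$ (it would demand $c_0=0$), so the vanishing of the bottom row is precisely the ingredient that turns the formal rescaling into an honest isomorphism.
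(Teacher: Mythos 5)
Your proposal is correct and is essentially the paper's own argument: the authors define the same diagonal rescaling $\mu_{s,t}=1/(t-1)!$ (via the recursion $\mu_{s,1}=1$, $t\mu_{s,t+1}=\mu_{s,t}$), check the same commuting squares, and likewise note that the hypothesis $B^{i,0}_0=(0)$ is what restricts attention to $t\ge 1$. Your additional remark explaining why the recursion cannot cross $t=0$ is a nice clarification, but the method is identical.
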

\begin{proof} Define constants $\mu_{s,t}$ recursively by $\mu_{s,1} = 1$ and $t\mu_{s,t+1}=\mu_{s,t}$. (More simply, $\mu_{s,t} = 1/(t-1)!$.) The diagrams 
\eq{}{\begin{CD}B^{s,t}_0 @> d-tdf>> B^{s+1,t}_0\oplus B^{s,t+1}_0 \\
@V \mu_{s,t} VV   @V \mu_{s+1,t}\oplus \mu_{s,t+1} VV \\
B^{s,t}_0 @> d-df>> B^{s+1,t}_0\oplus B^{s,t+1}_0 
\end{CD}
}
all commute. Note our assumption means we need only consider the case $t\ge 1$. 
\end{proof}

\begin{thm}
We continue to assume $\Gamma(X, \Omega^i)=(0)$ for $i\ge 1$. Then the map $\sigma$ induces an isomorphism on cohomology 
$H^*(B^*_0,D_f) \to H^*_{dR}(W)$. 
\end{thm}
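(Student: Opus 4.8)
The plan is to reduce everything to algebraic de Rham cohomology on affine varieties and then to compare $(B^*_0,D_f)$ with $(\Gamma(W,\Omega^\bullet_W),d_W)$ by a pole-order filtration whose associated graded is a Koszul complex. First I would record that $W=X-\sV(f)$ is affine: since $\sO_X(1)$ is very ample and $f$ is a nonzero section of $\omega_X^{-1}=\sO_X(N)$, the divisor $\sV(f)$ is ample, so its complement $W$ is affine for every $f\neq 0$ (no smoothness of $\sV(f)$ is needed). By Grothendieck's theorem, $H^*_{dR}(W)$ is computed by the complex of global algebraic forms $(\Gamma(W,\Omega^\bullet_W),d_W)$, which is the target of $\sigma$, and the preceding lemma (case $a=0$) already shows $\sigma$ is a chain map there. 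It is convenient to also note that the localized punctured cone $U_f=\pi^{-1}(W)=\Spec R[1/f]$ is affine, so that the Euler--Koszul sequence \eqref{15} (and its consequence \eqref{16}) stays exact on global sections in each $\G_m$-weight over $U_f$; in weight $0$ this gives a short exact sequence $0\to\Gamma(W,\Omega^k_W)\to\Gamma(U_f,\Omega^k_{U_f})_0\xrightarrow{\Delta}\Gamma(W,\Omega^{k-1}_W)\to 0$, which is exactly the mechanism behind $\sigma(\omega)=\Delta\omega/f^t$.

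Next I would set up the comparison by pole order. On $B^*_0$, filter by the column index $s$ (equivalently, within each total degree $k=s+t$, by the co-pole order $k-t$). Because the hypothesis $\Gamma(X,\Omega^{\ge1})=0$ forces $B^{i,0}_0=0$, every form occurring has $t\ge1$, so $s$ runs over the finite range $0\le s\le k-1$ in total degree $k$; thus the filtration is finite in each degree and its spectral sequence converges to $H^*(B^*_0,D_f)$. The $s$-preserving part of $D_f=d-t\,df\wedge$ is $-t\,df\wedge$ (the $d''$ piece), while $d$ raises $s$, so the $E_0$-differential is wedging with $df$. On $\Gamma(W,\Omega^\bullet_W)$ I would use the pole-order filtration: the identity $d(\beta/f^t)=(f\,d\beta-t\,df\wedge\beta)/f^{t+1}$ shows that the leading, pole-raising term of $d_W$ is again $-df\wedge$. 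Since $\sigma(B^{s,t}_0)$ consists of forms of pole order $\le t$, the map $\sigma$ respects these filtrations (up to the reindexing $s\leftrightarrow k-t$), and both are exhaustive and bounded below.

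The heart of the argument is the identification of $E_1$. On both sides the $E_0$-page is the Koszul complex of $df$, and I claim $\sigma$ intertwines them and induces an isomorphism on $E_1$. The key input is the Euler--Koszul exact sequence \eqref{15}--\eqref{16} read over the affine $U_f$ in each weight $Nt$: it expresses the pole-exactly-$t$ graded pieces of $\Gamma(W,\Omega^{s+t}_W)$ as $\Delta(\omega/f^t)$ with $\omega\in\Gamma(U,\Omega^{s+t+1}_U)_{Nt}$, which is precisely the formula for $\sigma$. Combining this with Lemma \ref{lem3}(ii),(iii) (which give $\Delta\,df=Nf$ together with the Leibniz rule needed to compare $\Delta\circ(df\wedge-)$ with $(df\wedge-)\circ\Delta$), I expect $\sigma$ to carry the Koszul differential $-df\wedge$ on $\bigoplus_t\Gamma(U,\Omega^\bullet_U)_{Nt}$ to the leading term of $d_W$, and to be a quasi-isomorphism on the $E_0$-page, hence an isomorphism on $E_1$. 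Granting this, a filtered comparison/convergence argument (legitimate because the $B$-side filtration is finite in each degree and the $W$-side filtration is exhaustive with $H^*(W)$ finite-dimensional) yields that $\sigma$ is a quasi-isomorphism, proving the theorem. I would, if convenient, first replace $D_f=d-t\,df$ by $d-df$ using the quasi-isomorphism of the immediately preceding lemma.

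\textbf{The main obstacle} I anticipate is precisely this associated-graded step. The map $\bigoplus_t(\,\cdot/f^t)$ from $B^*_0$ to forms on $U_f$ is surjective but far from injective (since $\omega/f^t$ and $f\omega/f^{t+1}$ represent the same form), so $\sigma$ is \emph{not} an isomorphism on the graded objects; one only recovers an isomorphism after passing to Koszul cohomology, and this requires the exactness of \eqref{15} in each weight, i.e.\ the use of affineness of $U_f$ to kill the relevant higher cohomology, together with a careful sign- and index-bookkeeping check that the $df\wedge$ differential on the $B$-side matches the residue/leading-pole-term differential on the de Rham side. A secondary technical point is convergence of the de Rham-side spectral sequence, for which I would invoke exhaustiveness of the pole filtration and finite-dimensionality of $H^*(W)$; notably, none of these steps uses smoothness of $\sV(f)$, so the isomorphism should hold for all $f\neq 0$.
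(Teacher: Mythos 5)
Your proposal follows essentially the same route as the paper: the same filtration of $B^*_0$ by the column index $s$, the same pole-order filtration on $\Gamma(W,\Omega^*_W)$, the observation that the $E_0$-differentials on both sides are multiples of $df\wedge$, an isomorphism on $E_1$, and a direct-limit/convergence argument over the exhaustive filtrations. The one step you leave conditional --- the $E_1$ identification --- is settled in the paper not via the Euler--Koszul sequence over the affine $U_f$ but by an explicit formula: a class in $H^{j}(gr^p_F\Gamma(W,\Omega^*_W))$ represented by $\eta/f^{j-p}$ is hit (up to the nonzero scalar $|\eta|$) by $\theta=-\frac{|\eta|}{N}\frac{df}{f}\wedge\eta$, which lies in $B^{j}_0$ precisely because $d(\eta/f^{j-p})$ is divisible by $f$.
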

\begin{proof}
 We have a decreasing filtration 
\eq{}{F^pB^k_0 := \bigoplus_{s\ge p}B^{s,k-s}_0;\quad D_f(F^pB^k_0)\subset F^pB^{k+1}_0.
}
We define a filtration on $\Gamma(W, \Omega^*_W)$ by
\eq{}{F^p\Gamma(W, \Omega^j_W) = \begin{cases}\{\omega/f^{j-p}\ |\ \text{$\omega$ has no pole along $f=0$}\} & j\ge p \\
                                  0 & p>j
                                 \end{cases}
}
Again $\quad dF^p \subset F^p$. 
We have $\sigma(F^pB^k_0)\subset F^p\Gamma(W, \Omega^k_W)$ and a map of spectral sequences
\eq{}{\sigma: {}'E^{p,q}_1 := H^{p+q}(gr_F^pB^*_0) \to E^{p,q}_1:= H^{p+q}(gr^p_F\Gamma(W,\Omega^*_W))
}
Let $\eta/f^{j-p}\in F^p\Gamma(W,\Omega^j_W)$ represent a class in $H^{j}(gr^p_F\Gamma(W,\Omega^*_W))$. (By our hypothesis, $j>p$.)
Then
\eq{}{d(\eta/f^{j-p}) = (fd\eta-(j-p)df\wedge\eta)/f^{j-p+1}
}
is divisible by $f$, i.e. $\theta:= -\frac{|\eta|}{N}\frac{df}{f}\wedge\eta \in B^{j}_0$. $H^{j}(gr_F^pB^*_0)$ is the 
cohomology of the complex
\eq{}{B^{p,j-1-p}_0 \xrightarrow{df\wedge} B^{p,j-p}_0 \xrightarrow{df\wedge} B^{p,j+1-p}_0 
}
The form $\eta$ is well-defined upto a multiple of $f$ and a form $df\wedge x$, so $\theta$ is well-defined upto a form 
$df\wedge\xi$. Since $df\wedge\theta=0$, 
we see that $\theta$ represents a well-defined class in $H^{j}(gr_F^pB^*_0)$, and $\sigma(\theta) = |\eta|\eta/f^{j-p}$. It follows that
$\sigma$ induces an isomorphism on $E_1$ terms, and hence on $E_r$-terms for any finite $r$. To conclude we remark that 
$B^*_0 = \varinjlim_{p\to -\infty}F^pB^*_0$ and $\Gamma(W,\Omega^*_W) = \varinjlim_{p\to -\infty}F^p\Gamma(W,\Omega^*_W)$. For any finite value
of $p$, there are induced spectral sequences on $F^p$. For any one of these we have $E_1^{u,v}, {}'E_1^{u,v}$ vanishing for $u<<0$. Again $\sigma$ 
will induce isomorphisms on $E_1$. It follows that $\sigma$ is a direct limit of isomorphisms and hence is an isomorphism. 
\end{proof}
\begin{cor}\label{deRham}
We have (notation as in corollary \ref{cor3b}) 
\eq{}{ H^n_{dR}(W) \cong H^{Lie}_0(\frak g\oplus \C\cdot E, \Gamma(U,\sO_U)_{0\!\!\!\mod N})\cong H^{Lie}_0(\hat\fg,R_f).
}
\end{cor}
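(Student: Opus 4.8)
The final statement to prove is Corollary \ref{deRham}, which asserts the chain of isomorphisms
$$
H^n_{dR}(W) \cong H^{Lie}_0(\frak g\oplus \C\cdot E, \Gamma(U,\sO_U)_{0\!\!\!\mod N})\cong H^{Lie}_0(\hat\fg,R_f).
$$
The plan is to assemble this as the concatenation of results already established in the excerpt, reconciling the two differentials $d-tdf$ and $d-df$ that appear in the two halves of the paper. The rightmost isomorphism is essentially a matter of identifying actions: by Corollary \ref{cor3b}, under hypotheses (i)--(iii) the homology $H^{Lie}_0(\hat\fg,R_f)$ equals the cokernel \eqref{16b}, namely $\text{Coker}(\Gamma(U,\Omega^n_U)_{0\!\!\!\mod N}\xrightarrow{d+df}\Gamma(U,\omega_U)_{0\!\!\!\mod N})$; and the isomorphism $\C[\hat X]\cong \Gamma(U,\sO_U)_{0\!\!\!\mod N}$ there is an isomorphism of $\hat\fg$-modules intertwining the $Z^*_{f,\beta}$-action with the action from Lemma \ref{ghat-action}, so it identifies $H^{Lie}_0(\hat\fg,R_f)$ with $H^{Lie}_0(\frak g\oplus\C\cdot E,\Gamma(U,\sO_U)_{0\!\!\!\mod N})$.

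For the leftmost isomorphism I would invoke the last Theorem of Section 6 together with the preceding lemma. That Theorem gives $\sigma: H^*(B^*_0,D_f)\xrightarrow{\sim} H^*_{dR}(W)$ for the differential $D_f=d-tdf$, and the lemma just before it shows $(B^*_0,d-tdf)$ is quasi-isomorphic to $(B^*_0,d-df)$ via the rescaling constants $\mu_{s,t}=1/(t-1)!$. Taking the $*=n$ graded piece, the cohomology $H^n(B^*_0,d-df)$ is exactly the cokernel of $d-df$ at the top of the complex, since the module has a $\Z$-grading concentrated in the relevant degrees and $B^{i,0}_0=(0)$ for $i\ge 0$ by the standing hypothesis $\Gamma(X,\Omega^{\ge 1}_X)=(0)$. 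Thus the middle-degree cohomology $H^n_{dR}(W)$ is identified with $\text{Coker}(d-df)$ on $\Gamma(U,\Omega^n_U)_{0\!\!\!\mod N}\to \Gamma(U,\omega_U)_{0\!\!\!\mod N}$, which is the same cokernel \eqref{16b} appearing above, up to the sign convention $d+df$ versus $d-df$ (harmless, as replacing $f$ by $-f$ or rescaling gives an isomorphism of complexes).

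The main step is therefore bookkeeping: verifying that the grading indexing in the definition $B^{s,t}_a=\Gamma(U,\Omega^{s+t+1}_U)_{Nt-a}$ at $a=0$, summed over $s+t=n$, reproduces precisely the two-term complex $\Gamma(U,\Omega^n_U)_{0\!\!\!\mod N}\to\Gamma(U,\omega_U)_{0\!\!\!\mod N}$ in middle degree once the vanishing $B^{i,0}_0=(0)$ collapses all contributions except the top cokernel. Concretely, I would unwind that the condition $0\!\!\!\mod N$ on $\Gamma(U,\Omega^n_U)$ and $\Gamma(U,\omega_U)=\Gamma(U,\Omega^{n+1}_U)$ corresponds exactly to the degrees $Nt$ summed over $t$, and that the differential $d-df$ restricted to this piece is the map in \eqref{16b}. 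The hardest part is checking that hypotheses (i)--(iii) of Corollary \ref{cor3b} indeed supply everything needed by Section 6 — in particular that the surjectivity in (iii) makes the left vertical arrow of diagram \eqref{17b} surjective so that the three horizontal cokernels agree, and that the standing assumption $\Gamma(X,\Omega^{\ge 1}_X)=(0)$ holds for the homogeneous spaces in question (which it does for $G/P$ with $b_1=\cdots=b_{2k-1}$ of Hodge type forcing $H^{p,0}=0$ for $p\ge 1$). Once these compatibilities are confirmed, the corollary follows by concatenating the two isomorphisms.
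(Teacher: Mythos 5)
Your proposal is correct and follows essentially the same route the paper intends: the corollary is stated without a separate proof precisely because it is the concatenation of Corollary \ref{cor3b} (identifying $H^{Lie}_0(\hat\fg,R_f)$ with the cokernel \eqref{16b} via the $\hat\fg$-module isomorphism $\C[\hat X]\cong\Gamma(U,\sO_U)_{0\!\!\!\mod N}$) with the quasi-isomorphism lemma for $d-tdf$ versus $d-df$ and the theorem $H^*(B^*_0,D_f)\cong H^*_{dR}(W)$, plus the observation that $B^{n+1}_0=0$ and $B^{*,0}_0=0$ collapse $H^n(B^*_0)$ to the top cokernel. Your bookkeeping on the grading, the sign $d\pm df$ (absorbed by $\omega\mapsto(-1)^t\omega$ on $B^{s,t}_0$), and the verification of the standing hypotheses for $G/P$ are all as the paper requires.
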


\noindent{\it Proof of Theorem \ref{geo.thm}.} This follows immediately from Corollary \ref{deRham} and Theorem \ref{Lie-algebra-homology}. $\Box$

\section{Solution sheaf vs. period sheaf}

As an application, we will compare the solution sheaf of our tautological system and the period sheaf of smooth CY hyperplane sections in $X$, by giving a completeness criterion for the tautological system. We will then deduce Conjecture \ref{HLY-conjecture} as a special case.

\begin{defn}
We say that $\cM=\tau(\hat X,\Gamma(X,\omega_X^{-1})^*,G\times\G_m,(0;1))$ is complete, if its solutions sheaf coincides with the period sheaf $\Pi(X)$ on $\Gamma(X,\omega_X^{-1})_{sm}$.
\end{defn}

\begin{cor} 
The tautological system 
$$\cM=\tau(\hat X,\Gamma(X,\omega_X^{-1})^*,G\times\G_m,(0;1))$$ 
is complete iff the primitive cohomology $H^n(X)_{prim}$ is zero.
\end{cor}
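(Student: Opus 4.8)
The plan is to reduce completeness to a comparison of generic ranks. On the smooth locus $\Gamma(X,\omega_X^{-1})_{sm}$ both the period sheaf $\Pi(X)$ and the solution sheaf $Sol(\cM)$ are local systems, and by Theorem \ref{LY-theorem} the former is a sub-local-system of the latter. An inclusion of local systems on a connected base is an isomorphism precisely when the two have equal rank, so completeness is equivalent to equality of their generic ranks. By Proposition \ref{period-rank} the rank of $\Pi(X)$ is $\nu_n = \dim H^{n-1}(Y_f) - \dim i^*H^{n-1}(X)$, the dimension of the middle vanishing cohomology, while by Theorem \ref{geo.thm} the rank of $Sol(\cM)$ at a generic $f$ is $\dim H^n_{dR}(W)$, where $W = X - Y_f$. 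Hence it suffices to prove the identity
\[
\dim H^n_{dR}(W) = \nu_n + \dim H^n(X)_{prim}.
\]

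To establish this I would first note that $W$ is smooth quasi-projective, so by Grothendieck's comparison theorem $H^n_{dR}(W)\cong H^n(W;\C)$ and I may compute topologically. The main tool is the Gysin (residue) long exact sequence of the smooth divisor $Y := Y_f \subset X$ with open complement $W$, which in the middle degree reads
\[
H^{n-2}(Y)\xrightarrow{\,i_*\,}H^n(X)\xrightarrow{\,j^*\,}H^n(W)\xrightarrow{\,\mathrm{Res}\,}H^{n-1}(Y)\xrightarrow{\,i_*\,}H^{n+1}(X),
\]
where $i:Y\into X$ and $j:W\into X$. Exactness yields a short exact sequence $0\to \mathrm{im}(j^*)\to H^n(W)\to\mathrm{im}(\mathrm{Res})\to0$, so that $\dim H^n(W) = \dim\,\mathrm{im}(j^*) + \dim\,\mathrm{im}(\mathrm{Res})$.

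Next I would pin down the two images using the Lefschetz package. The image of the residue is $\ker(i_*\colon H^{n-1}(Y)\to H^{n+1}(X))$; since $i_*$ is surjective here (being Poincar\'e dual to the injective restriction $i^*\colon H^{n-1}(X)\to H^{n-1}(Y)$ of the Lefschetz hyperplane theorem), its kernel is exactly the vanishing cohomology, of dimension $\dim H^{n-1}(Y)-\dim H^{n-1}(X)=\nu_n$. For the image of $j^*$, it equals $\mathrm{coker}(i_*\colon H^{n-2}(Y)\to H^n(X))$. Because $i^*\colon H^{n-2}(X)\to H^{n-2}(Y)$ is an isomorphism (Lefschetz) and $i_*i^* = L = \cup[Y]$, the image of $i_*$ is $L(H^{n-2}(X))$, which is injective by hard Lefschetz and hence of dimension $\dim H^{n-2}(X)$. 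Thus $\dim\,\mathrm{im}(j^*) = \dim H^n(X)-\dim H^{n-2}(X) = \dim H^n(X)_{prim}$ by the primitive Lefschetz decomposition $H^n(X) = \bigoplus_{k\ge0}L^k P^{n-2k}$. Combining the two counts gives the desired identity, and therefore completeness holds if and only if $H^n(X)_{prim}=0$.

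The routine parts are the Gysin sequence and the dimension bookkeeping; the point demanding care is the correct invocation of the Lefschetz and hard Lefschetz theorems to identify the images of the two Gysin maps with the vanishing cohomology and the primitive middle cohomology respectively, together with the observation that an inclusion of local systems of equal rank is an equality --- this last point is what converts the numerical identity into the statement about sheaves. A minor additional check is that the generic $f$ at which Theorem \ref{geo.thm} computes the rank indeed lies in the smooth locus where both sheaves are local systems, which holds because generic anticanonical sections are smooth and avoid the dual-variety singular locus of $\cM$.
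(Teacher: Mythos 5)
Your proposal is correct and follows essentially the same route as the paper: compare the generic rank of $Sol(\cM)$, computed as $\dim H^n_{dR}(W)$ via Theorem \ref{geo.thm}, with the rank $\nu_n$ of $\Pi(X)$ from Proposition \ref{period-rank}, and observe that the discrepancy is exactly $\dim H^n(X)_{prim}$. The only difference is that the paper simply quotes the short exact sequence $0\to H^n(X)_{prim}\to H^n_{dR}(W)\to H^{n-1}(Y_f)_{van}\to 0$ as standard, whereas you rederive it from the Gysin sequence together with the Lefschetz package; your derivation is sound.
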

\begin{proof}
We have the exact sequence
\comment{See p159 [Voisin2].}
$$
0\ra H^n(X)_{prim}{\br j^*\over\ra} H^n_{dR}(W){\br\Res\over\ra} H^{n-1}(Y_f)_{van}\ra0
$$
where $H^{n-1}(Y_f)_{van}$ is the vanishing cohomology of $Y_f=\cV(f)$. Thus
$\Res$ is an isomorphism iff $H^n(X)_{prim}=0$. By Proposition \ref{period-rank}, the dimension of the vanishing cohomology coincides with the rank of the period sheaf. By Theorem \ref{Lie-algebra-homology} and Corollary \ref{deRham}, this agrees with the generic rank of our system $\cM$ iff $H^n(X)_{prim}=0$.
\end{proof}

\begin{prop} \label{period-rank}
The rank of $\Pi(X)$ is equal to the dimension of the vanishing cohomology of a smooth CY hypersurface $Y_f$.
\end{prop}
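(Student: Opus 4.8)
The plan is to realize the period sheaf as a local system built from the polarized variation of Hodge structure on the middle cohomology of the family $\pi:\cY\ra B$, $B=\Gamma(X,\omega_X^{-1})_{sm}$, and to compute its rank through the monodromy of the vanishing cohomology. Write $\HH=R^{n-1}\pi_*\C$ for the local system of middle cohomology, with fiber $H^{n-1}(Y_f)$, and let $\HH^\vee=R_{n-1}\pi_*\C$ be the dual (homology) local system. By the Lefschetz hyperplane theorem the intersection form gives an orthogonal, monodromy-invariant decomposition $\HH=\HH_{fix}\oplus\HH_{van}$, in which $\HH_{fix}=i^*H^{n-1}(X)$ is the (constant) fixed part and $\HH_{van}$ is the vanishing cohomology, of rank $\nu_n=\dim H^{n-1}(Y_f)-\dim i^*H^{n-1}(X)$ as in the statement. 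The period integrals are the functions $P_\gamma(f)=\int_{\gamma(f)}\omega_f=\bra\gamma(f),\omega_f\ket$, where $\omega_f$ generates the line $\Gamma(Y_f,\omega_{Y_f})\subset H^{n-1}(Y_f)$ (a flat family of nowhere-zero holomorphic $(n-1)$-forms, $Y_f$ being Calabi--Yau by adjunction), and $\gamma$ ranges over the flat multivalued sections of $\HH^\vee$.

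The first step is to locate $\omega_f$ within the decomposition. Since $\omega_f$ has Hodge type $(n-1,0)$ and the projection $\HH\ra\HH_{fix}$ is a morphism of Hodge structures, the image of $\omega_f$ in $\HH_{fix}$ lies in the $(n-1,0)$-part of $i^*H^{n-1}(X)$, which is a quotient of $H^{n-1,0}(X)=H^0(X,\Omega^{n-1}_X)$. For the homogeneous $X$ under consideration this group vanishes, so $\omega_f\in\HH_{van}\otimes\cO_B$. Consequently $P_\gamma$ depends only on the vanishing component of $\gamma$, and under Poincar\'e duality every flat class coming from $\HH_{fix}$ annihilates all periods.

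Next I would compute the rank of $\Pi(X)$ as a corank. A $\C$-linear relation $\sum_i c_iP_{\gamma_i}\equiv0$ among period functions is equivalent to the flat functional $\ell=\sum_ic_i\gamma_i$ satisfying $\bra\ell,\omega_f\ket=0$ for all $f$; since the analytic continuation of the identically-zero function stays zero, the set $\cK\subset\HH^\vee$ of such $\ell$ is a monodromy-invariant sub-local system, and $\mathrm{rk}\,\Pi(X)=\mathrm{rk}\,\HH-\mathrm{rk}\,\cK$. Identifying $\HH^\vee\cong\HH$ by Poincar\'e duality, the previous step gives $\HH_{fix}\subseteq\cK$, so it remains to prove $\cK\cap\HH_{van}=0$. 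This is where the essential input enters: the monodromy acts irreducibly on $\HH_{van}$, so the invariant subsystem $\cK\cap\HH_{van}$ is either $0$ or all of $\HH_{van}$; the latter would force $\omega_f$ to pair trivially with every class in $\HH_{van}$, contradicting nondegeneracy of the polarization on $\HH_{van}$ together with $\omega_f\neq0$. Hence $\cK=\HH_{fix}$ and $\mathrm{rk}\,\Pi(X)=\mathrm{rk}\,\HH-\mathrm{rk}\,\HH_{fix}=\nu_n$, as claimed.

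The main obstacle is the irreducibility of the monodromy action on $\HH_{van}$. I would establish it by restricting the very ample linear system $|\omega_X^{-1}|$ to a generic pencil: because $\omega_X^{-1}$ is very ample a generic pencil is a Lefschetz pencil, and Picard--Lefschetz theory then shows that its vanishing cycles are all conjugate under the monodromy of the pencil and span the vanishing cohomology, giving irreducibility already for the sub-monodromy of the pencil and hence for the full family. The remaining ingredients --- adjunction yielding $\omega_{Y_f}\cong\cO_{Y_f}$, the Hodge-theoretic orthogonal splitting, and nondegeneracy of the polarization on $\HH_{van}$ --- are standard, so the one genuinely delicate point is the reduction to, and application of, Picard--Lefschetz theory.
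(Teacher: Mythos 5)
Your proof is correct and follows essentially the same route as the paper's: both arguments rest on the residue class $\omega_f$ lying in the vanishing cohomology, the irreducibility of the monodromy representation on $H^{n-1}(Y_f)_{van}$, and the observation that the period pairing is a nonzero morphism of monodromy representations, so that its kernel is exactly the fixed part. The only difference is presentational (you use the orthogonal splitting of $\HH$ and supply justifications, via $H^{n-1,0}(X)=0$ and Lefschetz pencils, for two facts the paper simply asserts), so no further comparison is needed.
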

\begin{proof}
 Fix a smooth CY hyperplane section $f$. We know that the monodromy representation on the vanishing cohomology $H^{n-1}(Y_f)_{van}$ is irreducible. It follows that the monodromy action on $H_{n-1}(Y_f)/H^{n-1}(Y_f)_{van}^\perp$ is also irreducible. We have a nonzero homomorphism of representations from $H_n(Y_f)$ to the stalk of $\Pi$ at $f$:
$$
H_{n-1}(Y_f)\ra \Pi_f,~~~\gamma\mapsto\int_\gamma \Res \Omega_f
$$
where $\Res \Omega_f$ is the Poincar\'e residue of a meromorphic form on $X$ with pole along $Y_f$ (Theorem 6.6 \cite{LY}.) Since $\Res \Omega_f\in H^{n-1}(Y_f)_{van}$, it follows that $H^{n-1}(Y_f)_{van}^\perp\subset H_{n-1}(Y_f)$ lies in the kernel of map. By irreducibility, the map induces
$$
H_{n-1}(Y_f)/H^{n-1}(Y_f)_{van}^\perp\cong\Pi_f.
$$
\comment{Note that since $H_{n-1}(Y_f)$ is rigid (topological), we can identify canonically all such groups for $\sigma$ close to $f$. So $H^{n-1}(Y_f)_{van}^\perp\subset H_{n-1}(Y_f)\equiv H_{n-1}(Y_f)$ is independent of $\sigma$.}
\end{proof}

\begin{cor}
Conjecture \ref{HLY-conjecture} holds. Therefore, the generic rank of the solution sheaf of the tautological system in this case is 
$${n\over n+1}(n^n-(-1)^n).$$
\end{cor}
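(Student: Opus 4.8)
The plan is to assemble the results already established in the preceding sections. Since $X=\P^n = SL_{n+1}/P$ is a Hermitian symmetric space (the Grassmannian $\mathrm{Gr}(1,n+1)$), its isotropy parabolic $P$ has abelian unipotent radical, and the Euler sequence gives $\Gamma(\P^n,T_{\P^n})\cong\mathfrak{sl}_{n+1}=\fg$. Thus Proposition \ref{prop4b}(a) applies, and the surjectivity hypothesis (iii) of Corollary \ref{cor3b}---equivalently the hypothesis of Theorem \ref{geo.thm}---holds for $\P^n$. Consequently Conjecture \ref{holo-rank} is valid here: the generic rank of the solution sheaf $Sol(\cM)$ equals $\dim H^n_{dR}(\P^n-Y_f)$.

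For completeness I would invoke the completeness criterion proved above, which reduces the coincidence of $Sol(\cM)$ with the period sheaf to the vanishing of $H^n(\P^n)_{prim}$. Because the cohomology ring of $\P^n$ is generated by the hyperplane class $h$, the Lefschetz operator $L=h\wedge(-)$ is injective on $H^n(\P^n)$: when $n$ is even it sends $h^{n/2}\mapsto h^{n/2+1}\neq 0$, and when $n$ is odd one has $H^n(\P^n)=0$ outright. Hence $H^n(\P^n)_{prim}=0$, so $\cM$ is complete; this is exactly Corollary \ref{HLY-conjecture}.

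It remains to record the explicit value of the rank. Chaining Theorem \ref{geo.thm} with Corollary \ref{deRham} identifies the generic rank with $\dim H^{Lie}_0(\hat\fg,R_f)$, which was evaluated directly in the example following Theorem \ref{Lie-algebra-homology} through the Gauss-sum computation giving $\sum_{(n+1)\mid s}a(s)=\nu_n$; alternatively, completeness together with Proposition \ref{period-rank} identifies the rank with the dimension of the vanishing cohomology, which the Lefschetz hyperplane theorem evaluates as $\nu_n=\frac{n}{n+1}(n^n-(-1)^n)$. No genuine obstacle remains at this stage: the only substantive input---the evaluation of $\dim H^{Lie}_0(\hat\fg,Re^f)$ via the Gauss sum---was already carried out, and the present corollary is the bookkeeping step that combines the algebraic rank count, the geometric identification of Theorem \ref{geo.thm}, and the topological vanishing of the primitive middle cohomology of $\P^n$.
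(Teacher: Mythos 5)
Your proposal is correct and follows the paper's own argument: apply Proposition \ref{prop4b}(a) to verify the surjectivity hypothesis for $\P^n$, invoke the completeness criterion via the (obvious) vanishing of $H^n(\P^n)_{prim}$, and then compute the rank as $\nu_n$ from the Lefschetz hyperplane theorem. The only difference is that you spell out the vanishing of the primitive cohomology and mention the Gauss-sum computation as an alternative route to the numerical value, both of which the paper leaves implicit.
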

\begin{proof}
For $X=\P^n$, Corollary \ref{deRham} holds in this case (see Proposition \ref{prop4b}), and  we obviously have $H^n(X)_{prim}=0$. So the tautological system $\cM$ in the preceding corollary is complete, proving Conjecture \ref{HLY-conjecture}. The last assertion is an easy calculation of $\dim H^{n-1}(Y_f)$ using the Lefschetz hyperplane theorem.
\end{proof}

\section{A chain map}

Corollary \ref{deRham} suggests that there might be a similar relation between Lie algebra homology groups and de Rham cohomology groups in degrees other than 0 and $n$. In this section, we define a chain map between the complexes defining those (co)homology groups. 

Recall that the Lie algebra homology of $\hat\fg$ with coefficient in $R_f$ can be given as the homology of the chain complex $(C_*(\hat\fg, R_f),d_{CE})$ where $d_{CE}$ is the Chevelley-Eilenberg homology differential and
$$
C_p(\hat \fg,R_f):=(U\hat \fg\otimes_\mathbb{C} \wedge^p \hat \fg)\otimes_{U\hat\fg}R_f\cong \wedge^p\hat\fg\otimes_\mathbb{C}R_f.
$$
We will define a chain map 
\eq{chain-map}{\varphi:(C_*(\hat\fg, R_f),d_{CE})\rightarrow (\Gamma(U, \Omega^{n+1-*}_U)_{0 \!\!\!\mod N}, d+df\wedge-)}
(where $N$ will be 1) which induces the isomorphism in (co)homology in one degree given by Corollary \ref{cor3b}. Here the second complex 
extends (horizontally) the top row of \eqref{17b}.
\question{Is it true that the cohomology of the second complex coincides with $H_{dR}^*(X-Y_f)$? Spencer's notes show that this de Rham cohomology coincides with $H^*(B_0^*,D_f)$, but $B_0^*$ is a bit different from the second complex above.}

As in section \ref{LA-geometry}, we choose $L=\omega_X^{-1}$, $U=L-\{0\}$, so that we have the identification $R_f=\Gamma(U,\cO_U)$. The space $U$ has a unique (up to scalar) $G$-invariant nonvanishing holomorphic top form $\omega_1$ such that 
$$h\cdot\omega_1=h^{-1}\omega_1$$
for $h\in\G_m$ (Theorem 3.3 \cite{LY}.) Then a straightforward calculation yields

\begin{prop}
Define \eqref{chain-map} by
$$
\varphi(x_1\wedge\cdots\wedge x_p\otimes g)=g i_{x_1}\cdots i_{x_p}\omega_1
$$
where $g\in R_f$, $x_j\in\hat\fg$, and $i_{x_j}$ denotes the contraction with the vector field generated by $x_j$. Then $\varphi$ is a chain map. Moreover, $\varphi$ is surjective iff the contraction map
$$
\hat\fg\otimes\Gamma(U, \Omega^{n+1-p}_U)\ra\Gamma(U, \Omega^{n-p}_U),~~~x\otimes\lambda\mapsto i_x\lambda
$$
is surjective for each $p\geq0$.
\end{prop}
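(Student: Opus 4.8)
The plan is to verify two independent claims: first that $\varphi$ is a chain map, and second that surjectivity of $\varphi$ is equivalent to surjectivity of the contraction maps in each degree.

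For the chain map property, I would first recall the explicit formula for the Chevalley--Eilenberg differential on $C_p(\hat\fg,R_f)=\wedge^p\hat\fg\otimes_\C R_f$, where $\hat\fg$ acts on $R_f$ via $Z^*_{f,\beta}$; by Remark \ref{Ref}(a) and Lemma \ref{ghat-action}, this action is $x\otimes g\mapsto \rho(x)g+g\rho(x)f$ for $x\in\fg$ (together with the Euler term for $x=E$). I would then compute $d_{dR}\varphi(x_1\wedge\cdots\wedge x_p\otimes g)$, where $d_{dR}=d+df\wedge$, and compare it term by term with $\varphi(d_{CE}(x_1\wedge\cdots\wedge x_p\otimes g))$. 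The essential identities I would invoke are the Cartan homotopy formula $\mathcal L_x = d\, i_x + i_x\, d$ and the fact that $\omega_1$ is $G$-invariant, so $\mathcal L_x\omega_1=0$ for $x\in\fg$ and $\mathcal L_E\omega_1=-\omega_1$ by the scaling property $h\cdot\omega_1=h^{-1}\omega_1$. The terms $i_{x_i}i_{x_j}$ with brackets reproduce the $\sum_{i<j}(-1)^{i+j}[x_i,x_j]\wedge\cdots$ part of $d_{CE}$, while applying $d$ and commuting it past the contractions via Cartan produces the coefficient action $\rho(x)g$; the wedge with $df$ produces the $g\,\rho(x)f$ piece through the contraction identity $i_x(df\wedge\lambda)=\rho(x)(f)\lambda - df\wedge i_x\lambda$, matching the multiplicative part of $Z^*_{f,\beta}$. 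This is a bookkeeping computation, and I would present it as a straightforward (if lengthy) check rather than grinding every sign.

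For the surjectivity equivalence, the key observation is that $\varphi$ factors through contraction against the fixed top form $\omega_1$. Since $\omega_1$ is a nowhere-vanishing section of $\Omega^{n+1}_U$ of degree $-N$ (here $N=1$), contraction against it gives $\cO_U$-linear isomorphisms $T_U\cong\Omega^n_U[N]$ and more generally $\wedge^k T_U\cong\Omega^{n+1-k}_U[kN]$ (this is the duality already used in \eqref{5b}). Under this identification, the map $g\otimes(x_1\wedge\cdots\wedge x_p)\mapsto g\,i_{x_1}\cdots i_{x_p}\omega_1$ becomes, up to the isomorphism, the natural map $\wedge^p\hat\fg\otimes R_f\to\Gamma(U,\wedge^p T_U)$ induced by the anchor $\hat\fg\to\Gamma(U,T_U)$. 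Thus in degree $p$ the image of $\varphi$ is exactly the $R_f$-span of $p$-fold wedges of vector fields coming from $\hat\fg$.

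I would then argue that $\varphi$ is surjective in all degrees if and only if each contraction map $\hat\fg\otimes\Gamma(U,\Omega^{n+1-p}_U)\to\Gamma(U,\Omega^{n-p}_U)$ is surjective. Translating the contraction statement through $\omega_1$ turns it into surjectivity of $\hat\fg\otimes\Gamma(U,\wedge^{p-1}T_U)\to\Gamma(U,\wedge^p T_U)$ given by $x\otimes\xi\mapsto x\wedge\xi$ (up to sign and the $\omega_1$-duality). An inductive argument shows that the image of $\varphi$ in degree $p$, namely the $R_f$-span of $p$-fold wedges from $\hat\fg$, coincides with all of $\Gamma(U,\wedge^p T_U)$ for every $p$ precisely when each single wedging step $\hat\fg\wedge(-)$ hits everything; the base case $p=0$ is trivial since $\varphi$ sends $g\mapsto g\,\omega_1$ and $R_f=\Gamma(U,\cO_U)$. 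The main subtlety I anticipate is making the bookkeeping between the two dual pictures precise: one must check that contraction against $\omega_1$ intertwines \emph{interior product by $x$} on forms with \emph{exterior product by $x$} on polyvector fields, with the correct degree shift and sign, and that this identification is compatible across all $p$ simultaneously so that the degreewise equivalence assembles correctly. Once that dictionary is set up, the equivalence of the two surjectivity conditions is formal.
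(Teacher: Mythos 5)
Your proposal is correct and is essentially the ``straightforward calculation'' the paper alludes to without writing out: the paper gives no proof of this proposition, and your plan supplies exactly the intended ingredients (Cartan's formula plus $G$-invariance of $\omega_1$ and $\mathcal{L}_E\omega_1=-\omega_1$ for the chain-map identity; the trivialization $\wedge^kT_U\cong\Omega^{n+1-k}_U$ by $\omega_1$, $\cO_U$-linearity of $i_x$, and induction on $p$ in both directions for the surjectivity equivalence). The only caveat is the one you already flag yourself --- the signs and the matching of the Euler/$\beta$ terms with \eqref{formula} must be tracked carefully --- but the structure of the argument is sound.
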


Note that the $p=0$ surjectivity condition above is equivalent to condition (iii) of Corollary \ref{cor3b}. We expect that $\varphi$ is surjective in general. However it need not induce an isomorphism on all (co)homology groups. In any case, the subcomplex $\ker(\varphi)\subset C_*(\hat\fg,R_f)$ can be described as follows. 

First, note that the $C_p(\hat\fg,R_f)=\wedge^p \hat \fg\otimes_\mathbb{C}\C[\hat X]$ as vector spaces and $\varphi$ as a linear map are both independent of $f$. The dependence on $f$ is through the differential of the complex. Put $S_0:=0$, and for $p\geq1$, define $S_p\subset C_p(\hat\fg,R_f)$ inductively by
\begin{equation}
S_p:=\cap_{g\in \Gamma(X,\omega_X^{-1})}d_g^{-1}(S_{p-1})
\end{equation}
where $d_g:C_p(\hat\fg,R_g)\ra C_{p-1}(\hat\fg,R_g)$ denotes the Chevellay-Eilenberg differential for a given $g\in\Gamma(X,\omega^{-1})$. In other words, given $c\in C_p(\hat\fg,R_f)$, we have $c\in S_p$ iff $d_g c\in S_{p-1}$ for all $g\in \Gamma(X,\omega_X^{-1})$. Clearly $S_*\subset C_*(\hat\fg,R_f)$ is a subcomplex. (Again, as a subspace it is clearly independent of $f$.)


We claim that $S_*=\ker(\varphi)$. This follows from the following standard argument:

At degree $p=0$, $\ker(\varphi)=0$ is obvious. To see that at degree $p$, $\ker(\varphi)=S_p$, note that $\ker(\varphi)\subseteq S_p$ is easy. On the other hand, for any $\beta\in S_p$, by induction $\varphi(\beta)$ is in the kernel of $d+dg\wedge.$ for any $g\in H^0(X,-K_X)$, then it is easy to show that $\varphi(\beta)$ has to be zero, and thus $S_p\subseteq ker(\varphi)$. 
\question{For $X=\P^n$, this is OK. But to show $S_p\subset\ker(\varphi)$ in general, we must show that  if $d(\varphi_p c)=0$ and $dg\wedge\varphi_p c=0$ for all $g$ then $\varphi_p c=0$. Is this obvious?}

\section{Rank 1 points for $G(2,N)$}\label{add.rk1}

In this section, we give an example of a rank $1$ point for $G(2,N)$. We give two proofs of their rank 1 property: one combinatorial and another geometric proofs. The first one is long, but we include it because it suggests an interesting connection between coinvariant space of Lie algebra module and graph theory. The second proof uses our geometric formula for holonomic rank and is much simpler.

\begin{thm} \label{rank1-points}
Let $X=G(2,N)$, $\hat G=SL_N\times\G_m$. At the hyperplane section $f=x_{1,2}\cdots x_{N-1,N}x_{N,1}$ (where the $x_{ij}$ are the Pl\"ucker coordinates of $X$), the rank of the solution sheaf to $\cM:=\tau(\hat X,\Gamma(X,\omega_X^{-1})^*,\hat G,(0;1))$ is 1.  
\end{thm}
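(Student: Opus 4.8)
The plan is to use the geometric formula from Corollary \ref{deRham}, which identifies the rank of the solution sheaf of $\cM$ with $\dim H^n_{dR}(W)$, where $W = X - \sV(f)$ and $n = \dim X = 2(N-2)$. Since $G(2,N)$ has $H^n(X)_{prim} = 0$ (its middle cohomology is generated by Schubert classes, hence by restrictions of ambient classes), the exact sequence in the completeness corollary shows $H^n_{dR}(W) \cong H^{n-1}(Y_f)_{van}$. Thus proving the rank is $1$ amounts to showing the vanishing cohomology of the CY hypersurface $Y_f = \sV(f)$ is one-dimensional. Equivalently, via Theorem \ref{Lie-algebra-homology}, I would show $\dim H^{Lie}_0(\hat\fg, R_f) = 1$.

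For the \emph{geometric proof} (the shorter one promised in the section introduction), I would first verify that $f = x_{1,2}\cdots x_{N-1,N}x_{N,1}$, a product of $N$ Pl\"ucker coordinates forming a single ``cycle'' $1\to 2\to\cdots\to N\to 1$, is indeed a section of $\omega_X^{-1}$: since $\omega_{G(2,N)}^{-1} = \sO(N)$ and each $x_{ij}$ is a section of $\sO(1)$, the product has the correct degree. The key step is to analyze the geometry of the complement $W = X - \sV(f)$. The hypersurface $\sV(f)$ is a union of $N$ Schubert-type divisors $\{x_{i,i+1} = 0\}$ (indices mod $N$), so $W$ is the locus in $G(2,N)$ where all these cyclically-consecutive Pl\"ucker coordinates are nonzero. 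I expect $W$ to be (up to a torus factor) an affine space or a very simple rational variety whose middle de Rham cohomology is one-dimensional; the point is that the normal-crossing structure of $\sV(f)$ is combinatorially modeled by the $N$-cycle graph, and the intersection pattern of its components should force all but one de Rham class to vanish.

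For the \emph{combinatorial proof}, I would instead compute $H^{Lie}_0(\hat\fg, R_f)$ directly, following the method used for $\P^n$ in the proof of Corollary \ref{HLY-conjecture}. Using Remark \ref{Ref}(a), I would work in $R e^f$ with the action $x \otimes \phi e^f \mapsto (Z^*(x) - \beta(x))(\phi e^f)$, and compute the coinvariants $R_f / (Z^*-\beta)(\hat\fg) R_f$. The $\fsl_N$-action supplies the operators $\partial/\partial x_{ij} \cdot x_{kl}$ acting on the Pl\"ucker coordinate ring, and together with the scaling operator from $\C$ and the Pl\"ucker relations these generate enough relations to reduce any monomial to a normal form. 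The combinatorics here should be governed by graphs on $N$ vertices (each Pl\"ucker variable $x_{ij}$ being an edge $\{i,j\}$), with the reduction relations corresponding to local moves on such graphs; the claim is that modulo these moves every class reduces to a unique generator, giving dimension $1$.

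The main obstacle will be the combinatorial proof: showing that the graph-reduction process terminates at a \emph{single} generator requires a careful analysis of which monomials survive modulo both the derivative relations and the Pl\"ucker relations, and establishing that no two distinct normal forms can occur. The subtlety is that, unlike the $\P^n$ case where the $\ZZ/(n+1)$-grading and an explicit Gauss-sum count pin down the dimension, here the Pl\"ucker relations are genuinely quadratic and the orbit structure under $\fsl_N$ is more intricate, so I would expect to need an explicit invariant (a weight or a combinatorial statistic on the graphs) that is preserved by all reduction moves and takes a single value on the surviving generator. The geometric proof sidesteps this entirely, which is precisely why it is much simpler, so I would lead with the geometric argument and present the combinatorial computation as an illustration of the connection to graph theory.
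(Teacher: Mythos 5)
Your overall plan matches the paper's structure (a geometric proof via Corollary \ref{deRham} and a combinatorial proof via the graph calculus on Pl\"ucker monomials), but the first step of your geometric argument contains a genuine error. You propose to use the exact sequence $0\to H^n(X)_{prim}\to H^n_{dR}(W)\to H^{n-1}(Y_f)_{van}\to 0$ to reduce the claim to one-dimensionality of the vanishing cohomology of $Y_f$. That sequence is only available when $Y_f$ is a \emph{smooth} hyperplane section; here $f=x_{1,2}\cdots x_{N-1,N}x_{N,1}$ is a highly degenerate point and $\sV(f)$ is a union of $N$ divisors, so there is no vanishing cohomology to speak of and the reduction fails. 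What the paper does instead is apply Corollary \ref{deRham} (which, via Theorem \ref{geo.thm}, is valid at \emph{every} $f\in V^*$, including singular points of the system) and then compute $H^{2N-4}_{dR}(W)$ directly: the complement $W=X-\sV(f)$ is written explicitly in the affine chart as $\Spec \C[a_3,b_3,\dotsc,a_N,b_N]$ localized at $a_3$, the consecutive minors $a_{p-1}b_p-a_pb_{p-1}$, and $b_N$; this is exhibited as the top of a tower of principal $\G_m\rtimes\G_a$-bundles $V_p\to V_{p-1}$, each of which splits since the base is affine with trivial Picard group, and an induction with the Gysin sequence gives $H^i(V_N-W_N)=0$ for $i\ge 2N-3$ and $H^{2N-4}(V_N-W_N)\cong\Z$. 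Your intuition that $W$ is ``up to a torus factor, a very simple rational variety'' is correct, but the actual computation is this explicit fibration argument, not a normal-crossings analysis of $\sV(f)$.

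Two further gaps in the combinatorial route: first, the coinvariant computation only yields an \emph{upper} bound of $1$ (everything reduces to a multiple of $e^f$); you still need $e^f\not\equiv 0$, which the paper establishes separately by exhibiting a nonvanishing period integral over a real torus $|z_{ij}|=r_{ij}$ in the big cell (Lemma \ref{at-most-rank-one}). Second, the termination invariants you correctly anticipate needing are, in the paper, the additive and multiplicative edge-length statistics $I_a(G)=\sum_e d(e)$ and $I_m(G)=\prod_e d(e)$ (with $d(e)$ the cyclic distance between endpoints), which strictly decrease in a suitable lexicographic sense under Pl\"ucker crossing-removal; after reducing to non-crossing graphs one uses the existence of cyclic loops and a further ``$D$-value'' (number of edges of the $N$-cycle $f$ missing from $G$) to drive every class to a multiple of $e^f$. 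Without supplying these invariants and the loop-manipulation lemmas, the combinatorial argument remains a program rather than a proof.
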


We will first give the combinatorial proof in a series of lemmas.

\begin{lem}\label{at-most-rank-one}
The rank of the solution sheaf of $\cM$ at $f$ is at least 1.
\end{lem}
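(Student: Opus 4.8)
The plan is to exhibit one explicit nonzero solution to $\cM$ at the given point $f$, which by Theorem \ref{Lie-algebra-homology} is equivalent to showing that $H^{Lie}_0(\hat\fg, R_f)^* \neq 0$, i.e. that the coinvariant space $R_f / Z^*_{f,\beta}(\hat\fg) R_f$ is nonzero. Equivalently, via the geometric formula (Corollary \ref{deRham}), it suffices to show $H^n_{dR}(W) \neq 0$ where $W = X - \sV(f)$. Since the statement only asserts rank \emph{at least} $1$, the natural approach is to produce a single nonvanishing cohomology class or, on the algebraic side, a single linear functional on $R_f$ that kills the subspace $Z^*_{f,\beta}(\hat\fg) R_f$.

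First I would work on the algebraic side, using the description of the $\hat\fg$-action from Remark \ref{Ref}(a): under the identification $R_f \cong R e^f$, the action of $x \in \hat\fg$ sends $\phi e^f \mapsto (Z^*(x) - \beta(x))(\phi e^f)$. The task is to find a nonzero linear functional $\lambda$ on $R = \C[\hat X]$ (the Pl\"ucker coordinate ring of $G(2,N)$) that annihilates the image of the full $\hat\fg = \fs\fl_N \oplus \C$ action. The constant function $1 \in R$ (the degree-zero generator) is the obvious candidate to pair against: I would try to show that the functional extracting the coefficient of the ``top'' monomial associated to $f = x_{1,2}\cdots x_{N-1,N}x_{N,1}$ descends to a nonzero functional on the coinvariants. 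Concretely, one shows that every element of $Z^*_{f,\beta}(\hat\fg)R_f$ lies in the kernel of this functional, by checking it separately on the $\fs\fl_N$-generators $Z^*(X_{ij})$, $Z^*(H_i)$ (which act by the Pl\"ucker-coordinate derivations) and on the Euler/$\G_m$ generator (which by the crucial choice $\beta = (0;1)$ matches the grading operator, as in the $\P^n$ computation).

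The cleanest execution is probably the geometric one that the paper promises as ``much simpler.'' Here I would invoke Corollary \ref{deRham} to reduce to showing $H^n_{dR}(W) \neq 0$ for the complement of the specific hyperplane section $f = x_{1,2}\cdots x_{N-1,N}x_{N,1}$ in $X = G(2,N)$. Because $f$ is a product of $N$ Pl\"ucker coordinates arranged in a cycle $(1,2),(2,3),\dots,(N-1,N),(N,1)$, the divisor $\sV(f)$ is a union of $N$ Schubert-type hyperplane sections, and $W$ carries a large torus action whose structure should force a nonzero middle-degree de Rham class; I would produce an explicit logarithmic $n$-form $\frac{dg}{f}$ (a residue/Feynman-measure form in the sense of Remark \ref{Ref}(b)) and verify it is closed but not exact. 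The main obstacle is precisely this nonvanishing verification: producing a candidate class is routine, but certifying that it represents a nonzero class in $H^n_{dR}(W)$ — rather than being exact — requires either a residue computation along the toric strata or the pairing with an explicit $n$-cycle (a torus orbit closure) on which the period integral $\int \frac{dg}{f}$ is manifestly nonzero. I would lean on the latter: choose the $n$-torus cycle adapted to the cyclic monomial structure of $f$ and compute the integral as an iterated residue, showing it equals a nonzero constant, which simultaneously proves the class is nonzero and exhibits the promised rank-$1$ period.
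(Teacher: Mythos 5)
Your proposal lands on the paper's own argument: the paper integrates $\Omega/f'$ over the real torus $T_r$ given by $|z_{ij}|=r_{ij}$ in the affine chart $[I|Z]$, and certifies that this period integral is a nonzero local solution near $f$ by extracting its leading term via the global residue formula for $G(2,N)$ --- exactly your ``torus cycle adapted to the cyclic monomial structure of $f$, evaluated as a nonzero iterated residue.'' The only cosmetic difference is that the paper works directly with the period integral as a solution of $\cM$ (expanding $\Pi(f')$ around the degenerate point $f$) rather than routing through $H^n_{dR}(W)$, but the essential computation is the same.
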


\begin{proof} 
Consider the affine coordinate chart on $X$ given by the $2\times N$ matrix of the form $[I|Z]$, where $I$ is the $2\times 2$ identity matrix and $Z=(z_{ij})$. Let $T_r$ be the real torus of dimension $2(N-2)$ defined by
$$
|z_{ij}|=r_{ij}.
$$
For generic $r_{ij}>0$, $T_r$ lies in $X-\sV(f)$. Likewise, it lies in $X-\sV(f')$, for $f'$ close to $f$. Now the period integral $\Pi(f'):=\int_{T_r}{\Omega\over f'}$, as defined in \cite{LSY}, is a local solution to our tautological system in a neighborhood of $f$. We will show that this solution is nonzero. We can write $f'$ in the form
$$
f'=af+\sum_\alpha a_\alpha x^\alpha
$$
where the sum is over monomials (in Pl\"ucker coordinates) $x^\alpha\neq f$ in $\Gamma(X,\omega_X^{-1})$, and view 
the solution as a function of the variables $a_\alpha$ close to $0$, and of $a$ close to $1$. We can expand $\Pi(f')$ as a power series whose leading term is $a^{-1}\Pi(f)$. By using the global residue formula for $X=G(2,N)$ \cite{LSY}, we find that the leading term of $\Pi(f)$, as $a_\alpha\ra0$, is nonzero.
This shows that the solution $\Pi(f')$ is nontrivial, and so the holonomic rank at $f$ is at least 1.
\end{proof}

Put $V=\Gamma(X,\omega_X^{-1})^*$. Then 
$$
R:=\C[V]/I(\hat X,V)\cong\oplus_{k\geq0}\C[W]_{kN}/I(\hat X,W)_{kN}
$$
where $X\subset\P W$, $W=\wedge^2\C^N$, under the Pl\"ucker embedding. Thus we can represent elements of $R$ as 
polynomials of degrees divisible by $N$, in the Pl\"ucker coordinates, subject to the Pl\"ucker relations. By Theorem \ref{Lie-algebra-homology}, the solution sheaf of $\cM$ at $f$ is isomorphic to the dual of the coinvariant space
$$H^{Lie}_0(\hat\fg,Re^f)=Re^f/\hat\fg(Re^f).$$
We will show that the coinvariant space above has dimension at most 1, and is generated by the monomial $f$. We begin with some preparations. Let $T$ be the diagonal maximal torus of $SL_N$.
Since the monomial $f$ is $T$-invariant, for any weight vector $h\in R$ of $T$, $he^f$ is also a weight vector of the same weight. Since a weight vector is an eigenvector of the Lie algebra $\ft$ of $T$, it follows $he^f$ is trivial in the coinvariant space unless the weight is zero. Thus to prove the theorem, it suffices to show that 

(*) {\it $he^f\equiv\text{const.}~ e^f$ $\mod$ $\hat\fg(Re^f)$, for each $T$-invariant $h\in R$.}

We introduce a graphical representation of $R$ as follows. Each monomial in the $x_{ij}$ is represented as a graph: it has $N$ vertices, labelled by elements of $\Z/N\Z$ which we denote by $1,2,..,N$, placed evenly on a circle in counterclockwise order. See figure 1 below. A given monomial $x_{i_1j_1}\cdots x_{i_n,j_n}$ corresponds to the graph with $n$ edges, connecting pairs of vertices which are labelled by $(i_1,j_1),..,(i_n,j_n)$ respectively, repetition allowed. In other words, each factor $x_{ij}$ corresponds to a single edge connecting vertices $i$ and $j$. We identify $\C[W]$ with $\cG$, the linear space generated by all such graphs whose numbers of edges are divisible by $N$. 

For $1\leq i_1<i_2<i_3<i_4\leq N$, we have a  Pl\"ucker relation
$$
x_{i_1 i_3}x_{i_2i_4}=x_{i_1i_2}x_{i_3i_4}+x_{i_1i_4}x_{i_2i_3}
$$
which is graphically given by figure 1. We can view this relation as an {\it operation} to be performed on a graph to remove the crossing of the two edges $(i_1,i_3),(i_2,i_4)$, thereby expressing a given graph with a crossing, as a linear combination of graphs without it. Note that such an operation does not change the valence at each vertex. Since $I(\hat X,W)$ is generated by the Pl\"ucker relations, an element of $\cG$ lies in $\cI:=I(\hat X,W)$ iff it can be reduced to zero by a finite number of such Pl\"ucker operations.

\begin{figure}[htb]
\vskip-1in
\hskip-2in\includegraphics[height=7in,width=5.5in,angle=0]{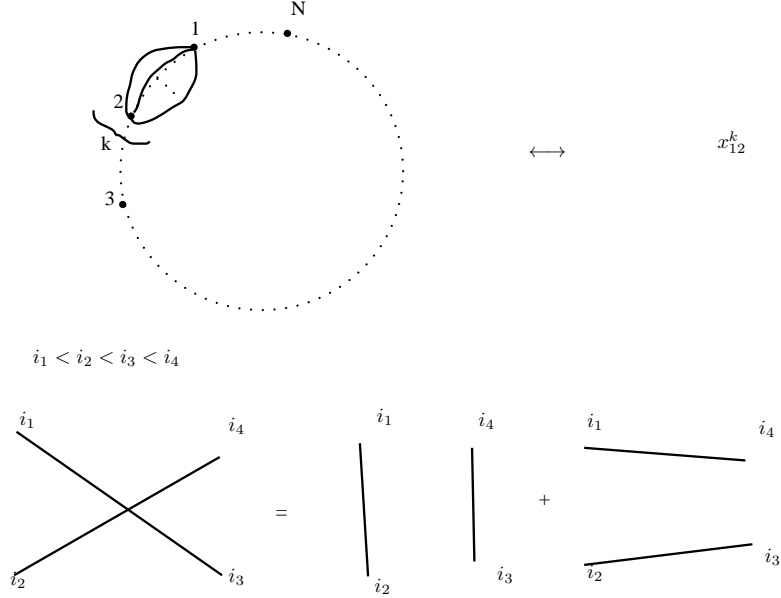}
\vskip-3in\caption{Graph-monomial correspondence and Pl\"ucker relations.}
\end{figure}


Define two integer valued functions $I_a$ and $I_m$ on graphs:
\begin{equation} \label{I_a}
I_a(G)=\sum_{\text{edges } e}d(e)
\end{equation}
\begin{equation}\label{I_m}
I_m(G)=\prod_{\text{edges } e}d(e)
\end{equation}
where for an edge $e$ connecting vertices $i,j$ in $G$, we put $d(e):=\min_{k\in\mathbb{Z}}|i-j+kN|$. Note that if we assign a distance 1 between any two neighboring vertices on the circle, then $d(e)$ is just the shortest distance between vertices $i,j$.

\begin{lem}
Let $G$ be a graph with at least one crossing, and $G_1+G_2\in\cG$ be obtained from $G$ by a single Pl\"ucker operation to remove a crossing. Then one of the following holds:
\begin{itemize} 
\item (1) $I_a(G_1)<I_a(G)$ and $I_a(G_2)<I_a(G)$;
\item (2) $I_a(G_1)<I_a(G)$ and $I_a(G_2)=I_a(G)$ and $I_m(G_2)<I_m(G)$;
\item (3) $I_a(G_1)=I_a(G)$ and $I_a(G_2)<I_a(G)$ and $I_m(G_1)<I_m(G)$.
\end{itemize}
\end{lem}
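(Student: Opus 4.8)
The plan is to analyze how the two functionals $I_a$ and $I_m$ change under a single Pl\"ucker operation applied at a crossing of edges $(i_1,i_3)$ and $(i_2,i_4)$, where $i_1<i_2<i_3<i_4$. The crossing edges have geodesic lengths $d_{13}=d(i_1,i_3)$ and $d_{24}=d(i_2,i_4)$, and they are replaced by the two resolutions $\{(i_1,i_2),(i_3,i_4)\}$ (giving $G_1$) and $\{(i_1,i_4),(i_2,i_3)\}$ (giving $G_2$). Since the Pl\"ucker operation only touches these two edges and leaves all other edges of $G$ intact, both $I_a$ and $I_m$ factor through the comparison of the old pair of edge-lengths with each new pair. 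So the entire problem reduces to a finite combinatorial statement about four points on a cycle of length $N$: if I write $a=d_{12}$, $b=d_{23}$, $c=d_{34}$, $d=d_{41}$ (the four gap-lengths going around) and note $d_{13}$, $d_{24}$ are the lengths of the two ``long'' chords, I must compare $d_{13}+d_{24}$ against $a+c$ and against $b+d$, and likewise compare the products.

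First I would set up the explicit distance formulas on the cycle $\ZZ/N\ZZ$. For two vertices the geodesic distance is $d(i,j)=\min(|i-j|,\,N-|i-j|)$. The key geometric fact is that resolving a crossing never increases total length: for the additive functional, I expect the clean inequalities $a+c\le d_{13}+d_{24}$ and $b+d\le d_{13}+d_{24}$, with the point being to pin down exactly when equality can hold. The intuition is that the two original chords genuinely cross on the circle, so replacing them by a ``parallel'' (non-crossing) pairing shortens total geodesic length unless the configuration is degenerate in a way forced by the $\min$ in the definition of $d$ (i.e.\ when some chord wraps past the antipode). The heart of the argument is therefore a careful case analysis of which of the six pairwise distances are realized by the ``short way'' versus the ``long way'' around the circle, driven by where the antipodal threshold $N/2$ falls relative to the four gaps $a,b,c,d$ (whose sum is $N$).

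The order of steps I would carry out is: (i) reduce to the four-point configuration and record $a+b+c+d=N$; (ii) prove the additive inequalities $I_a(G_1)\le I_a(G)$ and $I_a(G_2)\le I_a(G)$ by a case split on whether each relevant chord is geodesically realized ``the short way''; (iii) identify the equality cases for $I_a$ — these are precisely the degenerate configurations where exactly one of the two resolutions preserves total length while the other strictly decreases it, which is what forces the trichotomy rather than a simpler dichotomy; and (iv) in each equality case for one resolution, show that the multiplicative functional $I_m$ strictly drops. For step (iv) the relevant elementary inequality is that if $a+c=d_{13}+d_{24}$ as a sum but the two multisets $\{a,c\}$ and $\{d_{13},d_{24}\}$ differ, then their products differ, and I would argue that the resolution is strictly ``more balanced'' (its lengths are strictly closer together) so that $ac<d_{13}d_{24}$, using that the product of two positive numbers with fixed sum strictly increases as they move apart. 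The task is then to verify that a length-preserving resolution of a genuine crossing always produces a strictly more balanced (never equal) pair of lengths, ruling out the case $I_a$ equal and $I_m$ equal simultaneously.

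I expect the main obstacle to be the equality-case bookkeeping in step (iii): because $d(e)$ uses the $\min$ over wrappings, a single combinatorial crossing on the circle can correspond to several metric configurations depending on how the four gaps compare to $N/2$, and it is exactly in the boundary configurations (a gap equal to $N/2$, or a chord lengths tying) that one resolution can preserve $I_a$. I would organize this by the symmetry of the cyclic labelling (rotations and the reflection swapping the two resolutions) to cut the number of genuinely distinct cases, and then check each remaining boundary case by hand, confirming both that $I_a$ for at least one resolution strictly decreases and that whenever $I_a$ for the other resolution stays equal, $I_m$ strictly decreases. The payoff is that the trichotomy is exactly the statement that at least one resolution strictly decreases $I_a$, and the companion resolution either also decreases $I_a$ or, failing that, strictly decreases $I_m$ — giving a strictly decreasing $(I_a,I_m)$ lexicographic complexity that will later guarantee termination of the reduction procedure.
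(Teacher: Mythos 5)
Your overall strategy coincides with the paper's: localize to the two edges touched by the Pl\"ucker operation, prove the two additive inequalities with at least one strict by a case split on how each crossing chord is geodesically realized, and then, in the case where one resolution preserves $I_a$, compare the products of two pairs with equal sum. However, two of your intermediate claims are backwards and a third is false as written. In step (i), the identity $a+b+c+d=N$ holds for the four \emph{arc} lengths going around the circle, not for the geodesic distances $d(i_1,i_2),\dots,d(i_4,i_1)$ that actually enter $I_a$ and $I_m$: for $N=10$ and $(i_1,i_2,i_3,i_4)=(1,2,3,10)$ the geodesic gaps are $1,1,3,1$, summing to $6$. Relatedly, the inequality ``$a+c\le d_{13}+d_{24}$'' fails for arc lengths in that same example ($1+7=8>4=d_{13}+d_{24}$); it holds only for geodesic distances. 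The clean way to organize the case split --- and it is what the paper does --- is the observation that $d(i_1,i_3)$ equals either $d(i_1,i_2)+d(i_2,i_3)$ or $d(i_3,i_4)+d(i_4,i_1)$ (whichever way the geodesic winds, it decomposes additively over the intermediate vertex), and similarly for $d(i_2,i_4)$; the four resulting cases together with the triangle inequality give both additive inequalities, with at least one strict.

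More seriously, your step (iv) rests on two false statements that happen to cancel. For two positive numbers of fixed sum, the product strictly \emph{decreases} as they move apart (it is maximized at the balanced configuration), not increases. And the length-preserving resolution is strictly \emph{less} balanced than the crossing pair, not more: writing $A=d(i_1,i_2)$, $B=d(i_2,i_3)$, $C=d(i_3,i_4)$, in the relevant equality case the crossing pair is $\{A+B,\,B+C\}$ while the resolution pair is $\{B,\,A+B+C\}$, which has the same sum but strictly smaller minimum and strictly larger maximum since $A,C\ge 1$. The correct conclusion that $I_m$ drops then follows from $(A+B)(B+C)=(A+B+C)B+AC>(A+B+C)B$. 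Your final written inequality is the right one, but if you executed the argument as described you would discover the resolution is not more balanced, and your stated product principle would then push you to the opposite (wrong) conclusion. Fixing both signs repairs the step and simultaneously rules out $I_a$ and $I_m$ being preserved together, since $AC>0$ forces strictness.
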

\begin{proof}
Suppose $G$ contains the subgraph (corresponding to) $x_{i_1 i_3}x_{i_2i_4}$, with a crossing, and we perform a Pl\"ucker operation to remove it. After the Pl\"ucker operation (Firgure 1), we obtain a sum of two graphs $G_1,G_2$, which contain the subgraphs (without crossings) $x_{i_1i_2}x_{i_3i_4}$, $x_{i_1i_4}x_{i_2i_3}$ respectively. We have that $d(i_1,i_3)$ is equal to either $d(i_1,i_2)+d(i_2,i_3)$ or $d(i_3,i_4)+d(i_4,i_1)$ (cf. fig. 1.) Likewise $d(i_2,i_4)$ is either $d(i_2,i_3)+d(i_3,i_4)$ or $d(i_1,i_2)+d(i_4,i_1)$. In each of 4 cases, one finds that
\begin{eqnarray*}
&d(i_1,i_3)+d(i_2,i_4)\geq d(i_1,i_2)+d(i_3,i_4)~~\&\cr 
&d(i_1,i_3)+d(i_2,i_4)\geq d(i_1,i_4)+d(i_2,i_3)
\end{eqnarray*}
and that at least one of the inequalities is strict. 
Without loss of generality, we can assume that $I_a(G_1)<I_a(G)$.
So, it remains to show that if $I_a(G_2)=I_a(G)$ then $I_m(G_2)<I_m(G)$. For this shows that that either (1) or (2) holds ( (1) or (3) if we interchange the roles of $G_1,G_2$.) Thus assume $I_a(G_2)=I_a(G)$, which means that
\begin{equation}\label{case1}
d(i_1,i_3)+d(i_2,i_4)=d(i_4,i_1)+d(i_2,i_3).
\end{equation}
Consider the case $d(i_1,i_3)=d(i_1,i_2)+d(i_2,i_3)$ and $d(i_2,i_4)=d(i_2,i_3)+d(i_3,i_4)$. Then
$$
d(i_1,i_3)d(i_2,i_4)=(d(i_1,i_2)+d(i_2,i_3))(d(i_2,i_3)+d(i_3,i_4)).
$$
The right side is $>(d(i_1,i_2)+d(i_2,i_3)+d(i_3,i_4))d(i_2,i_3)=d(i_4,i_1)d(i_2,i_3)$ by
eq. \eqref{case1}. This shows that $I_m(G)>I_m(G_2)$ in this case. The remaining 3 cases can be  done similarly. 
\end{proof}

\begin{cor}\label{no-crossings}
Every graph $G$ can be reduced to a linear combination of graphs with no crossings. 
\end{cor}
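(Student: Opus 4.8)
The plan is to argue by well-founded (Noetherian) induction on the lexicographically ordered pair $(I_a(G), I_m(G))$. First I would record that this order is well-founded on the graphs in question: since every edge $e$ of a graph satisfies $d(e)\geq 1$ (the Pl\"ucker coordinates $x_{ij}$ all have $i\neq j$), we have $I_m(G)\geq 1$, and since a single Pl\"ucker operation replaces two edges by two edges, the total number of edges is preserved, so $I_a(G)$ is bounded below by this fixed edge count. Hence $(I_a,I_m)$ takes values in a subset of $\Z\times\Z$ bounded below in each coordinate, and every strictly lexicographically decreasing sequence terminates.

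The base case is a graph $G$ with no crossings, for which there is nothing to prove. For the inductive step, suppose $G$ has at least one crossing and that the statement holds for every graph of strictly smaller measure in the lexicographic order. I would choose a crossing and perform the corresponding single Pl\"ucker operation to write $G\equiv G_1+G_2$ in $R$, i.e.\ modulo the Pl\"ucker relations. By the preceding Lemma, one of its three cases holds, and in each case both $G_1$ and $G_2$ satisfy either $I_a<I_a(G)$, or else $I_a=I_a(G)$ together with $I_m<I_m(G)$; in other words $(I_a(G_j),I_m(G_j))<(I_a(G),I_m(G))$ lexicographically for $j=1,2$. The induction hypothesis then applies to each of $G_1,G_2$, expressing each as a linear combination of crossing-free graphs, whence $G=G_1+G_2$ is such a combination as well.

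I expect essentially no obstacle here, since all of the quantitative content has already been extracted in the preceding Lemma. The only two points requiring care are (a) recording that the lexicographic order on $(I_a,I_m)$ is well-founded, which follows from the positivity of $d(e)$ and the invariance of the edge count under Pl\"ucker moves, and (b) observing that the trichotomy of the Lemma is precisely the assertion that each summand $G_j$ strictly descends in this order. Both are immediate from the definitions \eqref{I_a} and \eqref{I_m} of $I_a$ and $I_m$, so the corollary follows.
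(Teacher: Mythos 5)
Your proof is correct and follows essentially the same strategy as the paper: the paper performs an outer induction on $I_a$ with an inner descent on $I_m$ driven by the trichotomy of the preceding lemma, which is exactly the lexicographic well-founded induction on $(I_a,I_m)$ that you have written out (and your explicit verification of well-foundedness via $d(e)\geq 1$ and the preservation of the edge count is a nice touch the paper leaves implicit). No gaps.
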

\begin{proof}
Let $NC$ be the span of graphs with no crossings.
By the preceding lemma, if $I_a(G)\leq 1$ then $G\in NC$. Assume that $I_a(G)\leq k$ implies $G\in NC$.
Let $I_a(G)=k+1$. Then $G$ can be reduced to $G_1+G_2$, where $G_1\in NC$ by the lemma again, so
$$
G\equiv G_2\mod NC.
$$
If $I_a(G_2)\leq k$ then we are done. If not, then we have $I_a(G_2)=I_a(G)=k+1$ and $I_m(G_2)<I_m(G)$ by the lemma. Continuing this way, the reduction process must terminates at some point with $G\equiv 0\mod NC$, or
$$
G\equiv G_q\mod NC
$$
where $G_q$ is a graph with $I_a(G_q)=I_a(G)$ and $I_m(G_q)$ can no longer be decreased by a Pl\"ucker operation. In this case $G_q$ has no crossings, hence $G\in NC$. 
\end{proof}

A degree $N$ monomial $x_{i_1j_1}\cdots x_{i_Nj_N}$ is $T$-invariant iff each $i\in\{1,..,N\}$ appears exactly twice as subscripts in this monomial. Thus the graph  of this monomial has valence 2 at each vertex. Similarly, a degree $mN$ monomial is $T$-invariant iff it has valence $2m$ at each vertex. We call it a valence $2m$ graph.
We shall say that a graph has a {\it cyclic} loop of length $s\geq2$ if there is a vertex $i$ and an edge between any two consecutive vertices in the following list:
$$
i,i+1,...,i+s-1,i.
$$
In this case, we say that the loop contains those vertices.
Clearly a cyclic loop of length $N$ corresponds to the monomial $f=x_{1,2}\cdots x_{N-1,N}x_{N,1}$.

Suppose $G$ is a valence $2m$ graph with no crossings. Then it is easy to see that every vertex in $G$ belongs in one or more loops, not necessarily cyclic. 
Observe that a non-cyclic loop in $G$ must contain an edge $(a,b)$ such that there is at least one loop on each side of it. By minimizing the ``gap'' $|a-b|$, we see that $G$ contains at least one {\it cyclic} loop. By the same token, if $G$ does contains no cyclic loops of length $N$, then it must contain a cyclic loop containing some vertex $i$, such that $i$ is not connected to $i+1$ or $i-1$ by an edge. To summarize, we have

\begin{lem}\label{cyclic-loop}
Let $G$ be a valence $2m$ graph with no crossings. Then $G$ contains a cyclic loop. If $G$ contains no cyclic loop of length $N$, then it contains a cyclic loop containing some vertex $i$, such that $i$ is not connected to $i+1$ or $i-1$ by an edge.
\end{lem}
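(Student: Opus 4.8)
The plan is to argue entirely in the graphical language of Section~\ref{add.rk1}, using two nested extremal choices governed by the weight $d(e)=\min_{k}|i-j+kN|$ of an edge $e=(i,j)$. The starting observation is that, since every vertex of $G$ has positive even valence $2m$, the graph has no isolated vertices and decomposes into edge-disjoint cycles; hence every vertex lies on a cycle. For a vertex $i$ I write $m_i$ for the number of edges joining $i$ to $i+1$, the multiplicity of the corresponding adjacency edge.

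To produce a cyclic loop I would minimize $d(e)$. If every edge of $G$ is an adjacency edge, the valence condition reads $m_{i-1}+m_i=2m$ for all $i$; then either some $m_i\ge 2$, giving a length-$2$ cyclic loop (a doubled adjacency edge), or all $m_i=1$, forcing $m=1$ and exhibiting $G$ as the full $N$-cycle. Otherwise I choose an edge $e=(a,b)$ with $d(e)\ge 2$ minimal, oriented so the short arc is $a,a+1,\dots,a+d$ with $d=d(e)$. Non-crossing forces every edge incident to an interior vertex $a+k$ to stay inside the closed arc, and minimality of $d(e)$ rules out interior chords of length strictly between $1$ and $d$; hence all such edges are adjacency edges, and $m_k+m_{k+1}=2m$ at each interior vertex. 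Now either some interior adjacency multiplicity is $\ge 2$ (a length-$2$ loop), or all of them are $1$ (so $m=1$), in which case $(a,a+1),\dots,(a+d-1,a+d)$ close up with $e$ into a cyclic loop of length $d+1$. Either way $G$ contains a cyclic loop, which is the content of the ``minimize the gap'' remark.

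For the second statement I would first record that $G$ has a cyclic loop of length $N$ if and only if every adjacency edge is present; thus the hypothesis produces a \emph{gap}, a pair $c,c+1$ with $m_c=0$. Since $d(e)\le\lfloor N/2\rfloor$, the loop built above has length $\le\max(2,d+1)<N$, so its arc is a proper sub-arc with two honest endpoints. The decisive structural remark is that a gap vertex $c$ can never be an interior vertex of a cyclic loop, nor can it be the endpoint whose adjacent loop-edge would be $(c,c+1)$; so when $c$ lies on a cyclic loop it must sit at the opposite endpoint, where the loop joins it to $c-1$ but not to $c+1$. Such a $c$ is then a vertex of a cyclic loop not connected to $c+1$, exactly as required. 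Hence the claim reduces to exhibiting a cyclic loop incident to a gap, which I would try to obtain by re-running the minimization ``anchored'' at the gap $(c,c+1)$.

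The main obstacle is precisely this last anchoring step when $m\ge 2$. For $m=1$ there is essentially nothing to do: $G$ is a disjoint union of cyclic loops supported on consecutive arcs, the boundary between two consecutive arcs is forced to be a gap (an extra adjacency edge there would make some valence odd), and every endpoint is therefore a gap vertex of the desired kind. For $m\ge 2$, however, the same minimality argument collapses the innermost cyclic loop to a doubled adjacency edge that may be buried in the interior of a run of present adjacency edges, far from any gap. Guaranteeing that one can instead select a cyclic loop abutting a gap --- using the valence relations read around the circle together with the nesting of chords imposed by non-crossing --- is where I expect the genuine work to lie; by contrast, the bare existence of a cyclic loop (the first assertion) is comparatively soft.
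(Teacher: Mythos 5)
Your proof of the first assertion is correct, and it is essentially the paper's own ``minimize the gap'' argument made precise: the shortest edge with $d(e)\ge 2$ confines all edges at the interior vertices of its short arc to adjacency edges, and the valence count $m_{k}+m_{k+1}=2m$ at those vertices then yields either a doubled adjacency edge or an arc that closes up with $e$ into a cyclic loop. No complaints there.

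The second assertion, however, is not proved, and you say so yourself: for $m\ge 2$ you only reduce it to ``there exists a cyclic loop abutting a gap'' and stop. Since this is precisely the half of the lemma that the later reduction arguments of Section 8 rely on (one needs a cyclic loop sitting next to a \emph{missing} adjacency edge in order to apply $x_{i\pm 1}\partial_i$ profitably), leaving it open is a genuine gap, not a cosmetic one. In addition, your disposal of the case $m=1$ rests on a false structural claim: a valence-$2$ non-crossing graph need \emph{not} be a disjoint union of cyclic loops on consecutive arcs. For $N=7$ the monomial $x_{12}x_{25}x_{15}x_{34}^2x_{67}^2$ has valence $2$ at every vertex and no crossings, yet its component $\{(1,2),(2,5),(5,1)\}$ is a convex triangle on the non-consecutive set $\{1,2,5\}$ and is not a cyclic loop. (The lemma still holds there via the doubled edges $(3,4)$ and $(6,7)$; indeed for $m=1$ any cyclic loop of length $<N$ is automatically a witness, since its two endpoints have no edges besides those of the loop, so the $m=1$ case follows at once from the first assertion --- but by a different argument than yours.)

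The statement for general $m$ is true, but the ``anchoring'' you are missing is a recursion on arcs rather than a single extremal choice. Cut the circle at a gap and work inside an arc $[x,y]$ all of whose interior vertices have their full valence $2m$ confined to $[x,y]$ (non-crossing guarantees this is preserved), and which contains a gap $m_q=0$ with, say, $q+1$ interior. If $q+1$ has an edge to some $w\le q-1$, pass to the strictly shorter arc $[w,q+1]$, which still contains the gap $q$, and recurse. Otherwise every edge at $q+1$ points away from the gap: either $m_{q+1}\ge 2$, and the doubled edge $(q+1,q+2)$ is a witness because its left endpoint $q+1$ is not joined to $q$; or $q+1$ carries a chord $(q+1,z)$, and on the strictly shorter arc $[q+1,z]$ either every adjacency edge is present --- so it closes up into a cyclic loop whose left endpoint $q+1$ is not joined to $q$, again a witness --- or a new gap appears inside and one recurses. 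The arcs strictly shrink, so the process terminates (the mirror-image case where $q$ rather than $q+1$ is interior is identical, and the base case of an arc of length $2$ is immediate). This descent is the content your proposal is missing.
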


We now describe the $\hat\fg$-action on $Re^f$ graphically. Recall that under the identification $\Gamma(X,\cO(1))\equiv\wedge^2\C^N$, $x_{pq}\equiv x_p\wedge x_q$, the Lie algebra $\hat\fg=\fs\fl_N\oplus \C$ action can be represented by differential operators of the form $x_j\partial_i$, $i\neq j$, and $\half x_i\partial_i+1$, $i=1,..,N$, where
\begin{equation}\label{action-xpq}
x_j\partial_i x_{pq}=x_j\partial_i(x_p\wedge x_q)=\delta_{pi}x_{jq}+\delta_{qi}x_{pj}.
\end{equation}
(Note that $x_{pp}=0$ by definition.) This action extends to $R$ and to $Re^f$ by derivations, since the ideal generated by the Pl\"ucker relations is $\hat\fg$-stable. We can use \eqref{action-xpq} to compute graphically the action of $x_j\partial_i$ on any given graph. Figure 2 gives an example, where in each graph, we show only those edges affected by $x_j\partial_i$; every other edge not affected by this operator is the same in all four graphs.

\begin{figure}[htb]
\vskip-1in
\hskip-2in\includegraphics[height=7in,width=5.5in,angle=0]{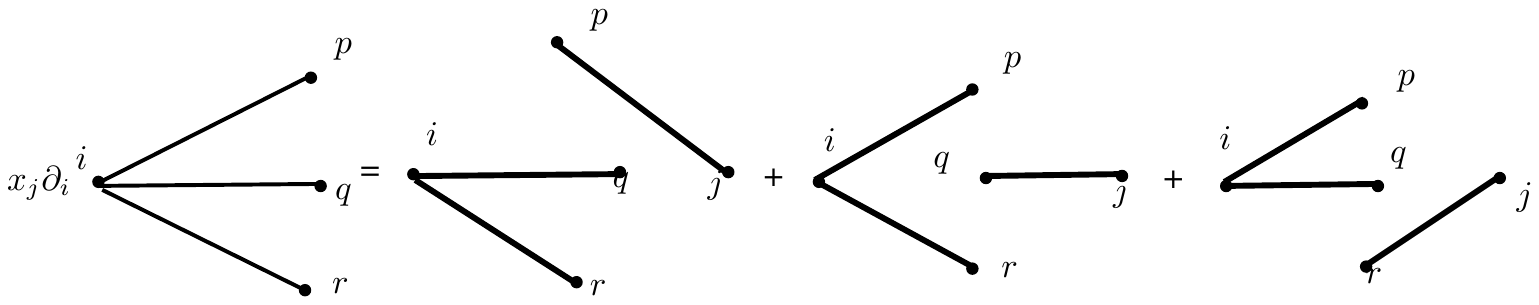}
\vskip-5in\caption{$\hat\fg$ action on graphs.}
\end{figure}

In the following, for $a\in Re^f$, we will write
$$
a\equiv0
$$
if $a\in \hat\fg(Re^f)$. Then \eqref{action-xpq} yields

\begin{lem}\label{fef}
Fix $i$ and let $G$ be a valence 2 graph. 
Then
$$
0\equiv(\half x_i\partial_i+1)(Ge^f)=2Ge^f+Gf e^f.
$$
\end{lem}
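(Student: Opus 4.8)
The plan is to read the statement off the explicit description of the $\hat\fg$-action on $Re^f$ recorded just above the lemma, and then to make that action concrete using only the Leibniz rule together with the valence-$2$ hypothesis. In particular I expect the assertion $\equiv 0$ to be essentially formal, with the only real content being the identity $(\half x_i\partial_i+1)(Ge^f)=2Ge^f+Gfe^f$.

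First I would note that, by the description preceding the lemma, the operator $\half x_i\partial_i+1$ is precisely the image under the map $x\mapsto Z^*(x)-\beta(x)$ of an element of $\hat\fg$: the $x_i\partial_i$ term is built from the Cartan of $\fs\fl_N$ together with the $\C$-summand of $\hat\fg=\fs\fl_N\oplus\C$, and the constant $+1$ is the contribution of $\beta=(0;1)$ (after reordering the derivation), exactly as in the $\P^n$ computation above. Granting this, for any $a\in Re^f$ the element $(\half x_i\partial_i+1)a$ lies in $\hat\fg(Re^f)$ by definition, which is literally the statement $(\half x_i\partial_i+1)(Ge^f)\equiv0$. So it remains only to evaluate the left-hand side.

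For the evaluation I would use that $\half x_i\partial_i$ is the first-order operator $Z^*(\cdot)$ and hence acts by derivations, while the scalar $+1$ acts by multiplication. Applying the Leibniz rule to the product $Ge^f$ and the chain rule $x_i\partial_i\,e^f=(x_i\partial_i f)e^f$ gives
\[
(\half x_i\partial_i+1)(Ge^f)=\half\big((x_i\partial_i G)e^f+G(x_i\partial_i f)e^f\big)+Ge^f.
\]
The key numerical input is that, via \eqref{action-xpq}, $x_i\partial_i$ counts the number of occurrences of the index $i$ in a Pl\"ucker monomial; since both $G$ and $f$ are valence-$2$ graphs, the index $i$ occurs exactly twice in each, so $x_i\partial_i G=2G$ and $x_i\partial_i f=2f$. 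Substituting yields $2Ge^f+Gfe^f$, as claimed.

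The computation itself is routine; the single point that needs care is the bookkeeping of constants. I would verify explicitly that the particular combination of Cartan generators and the $\C$-summand producing $x_i\partial_i$ pairs with $\beta=(0;1)$ to give exactly the scalar $+1$ (and not some other constant), since it is precisely this normalization that makes $\half x_i\partial_i+1$ an honest element of $(Z^*-\beta)(\hat\fg)$. Once this is confirmed, both the vanishing in $H^{Lie}_0(\hat\fg,Re^f)$ and the explicit formula follow simultaneously.
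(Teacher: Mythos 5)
Your proposal is correct and is exactly the argument the paper intends: the paper offers no written proof beyond the remark that \eqref{action-xpq} yields the lemma, and your computation — the operator $\half x_i\partial_i+1$ lies in the image of $\hat\fg$ (hence the $\equiv 0$), while Leibniz plus the fact that $x_i\partial_i$ multiplies a monomial by the number of occurrences of the index $i$ (which is $2$ for both the valence-$2$ graph $G$ and for $f$) gives $2Ge^f+Gfe^f$ — is precisely that intended derivation. Your cautionary remark about verifying the normalization constant $+1$ against $\beta=(0;1)$ is reasonable but is simply asserted in the paper's description of the action preceding the lemma.
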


\begin{lem}
If $G_1$ is a valence 2 graph containing a cyclic loop of length 2, then $G_1e^f\equiv0$.
\end{lem}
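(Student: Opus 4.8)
The plan is to work entirely in the coinvariant space $Re^f/\hat\fg(Re^f)$ and to exploit two families of relations. The first is Lemma \ref{fef}: for a valence-$2$ graph the scaling generator yields $Gfe^f\equiv -2Ge^f$. The second is that for every root operator $x_j\partial_i$ ($i\neq j$) and every $h\in R$ of degree divisible by $N$,
$$0\equiv (x_j\partial_i)(he^f)=(x_j\partial_i h)\,e^f+h\,(x_j\partial_i f)\,e^f,$$
which by \eqref{action-xpq} is the sum of the graph obtained by sliding one endpoint of an edge of $h$ from $i$ to $j$ and the graph $h$ times the same slide applied to the cyclic loop $f$. The feature that singles out a length-$2$ loop is that, if the loop sits on the adjacent vertices $a,a+1$, the double edge $x_{a,a+1}^2$ is annihilated by both $x_{a+1}\partial_a$ and $x_a\partial_{a+1}$: sliding an endpoint of $x_{a,a+1}$ onto the opposite vertex creates a self-loop $x_{a,a}=0$.

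First I would fix the local picture. Since $G_1$ has valence $2$ and the double edge already saturates the valence at $a$ and at $a+1$, write $G_1=x_{a,a+1}^2 G'$, where $G'$ is a valence-$2$ graph carrying no edge incident to $a$ or $a+1$. Applying $x_{a+1}\partial_a$ and $x_a\partial_{a+1}$ directly to $G_1e^f$ then annihilates the action on the graph $G_1$ (the double edge is killed and $G'$ has no edge at $a$ or $a+1$), leaving the two relations $G_1 f_1 e^f\equiv 0$ and $G_1 f_2 e^f\equiv 0$, where $f_1=x_{a+1}\partial_a f$ and $f_2=x_a\partial_{a+1}f$ are the graphs obtained from $f$ by sliding one endpoint off the vertices $a$ and $a+1$ respectively.

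Next I would produce $G_1$ itself as the leading term of a root operator acting on an auxiliary monomial. For instance, with $h=x_{a,a+1}x_{a+1,a+2}G'\in R$ and the operator $x_a\partial_{a+2}$ one gets $x_a\partial_{a+2}h=-G_1+\rho$, where the remainder $\rho=x_{a,a+1}x_{a+1,a+2}(x_a\partial_{a+2}G')$ no longer carries the double edge. Substituting into the displayed relation expresses $G_1e^f$ as a combination of $\rho e^f$ and a degree-$2N$ term $h(x_a\partial_{a+2}f)e^f$. The degree-$2N$ term is folded back to degree $N$ using Lemma \ref{fef} (which trades a factor of $f$ for the constant $-2$), and together with the two annihilation relations above and Pl\"ucker reductions on the four indices $a-1,a,a+1,a+2$, the right-hand side is rewritten in terms of graphs that are strictly simpler.

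The main obstacle is precisely this last bookkeeping: the remainders produced when a root operator differentiates $G'$ or $f$ are no longer of the clean double-edge form, so the argument must be organized as an induction. The natural monovariant is the pair $(I_a,I_m)$ already used in Corollary \ref{no-crossings} (or, alternatively, the number of vertices lying outside the loop), and one must check that every remainder graph either vanishes through a self-loop or lies strictly lower for this measure, so that the induction closes and forces $G_1e^f\equiv 0$. Controlling all remainder terms simultaneously, rather than one at a time, is the delicate step.
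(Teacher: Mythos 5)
Your route is genuinely different from the paper's, and it has a real gap at exactly the point you flag as ``delicate bookkeeping.'' The identity you derive from $h=x_{a,a+1}x_{a+1,a+2}G'$ and the operator $x_a\partial_{a+2}$ reads
$$
G_1e^f\equiv \rho\, e^f + h\,(x_a\partial_{a+2}f)\,e^f,
$$
and neither term on the right is controlled. First, Lemma \ref{fef} (and later Lemma \ref{hfef=hef}) only trades a factor of $f$ \emph{itself} for a constant; your degree-$2N$ term has second factor $x_a\partial_{a+2}f$, which is a sum of cyclic graphs distinct from $f$, so nothing folds it back to degree $N$. The same objection applies to your two ``annihilation relations'' $G_1f_1e^f\equiv0$ and $G_1f_2e^f\equiv0$: they are correct but live in degree $2N$ with $f_1,f_2\neq f$, and you give no mechanism to descend from them to the degree-$N$ statement. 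Second, and more seriously, the remainder $\rho=x_{a,a+1}x_{a+1,a+2}(x_a\partial_{a+2}G')$ is a sum of honest valence-$2$ graphs without the double edge, and such graphs are \emph{not} generically $\equiv0$: the coinvariant space is one-dimensional, spanned by $e^f$, so a typical valence-$2$ graph $G$ satisfies $Ge^f\equiv c\,e^f$ with $c\neq 0$ (e.g.\ $fe^f\equiv-2e^f$). Hence an induction showing the remainders are ``strictly simpler'' for $(I_a,I_m)$ cannot by itself yield $G_1e^f\equiv0$ as opposed to $G_1e^f\equiv c\,e^f$; you would have to track the constants and exhibit an exact cancellation, which the proposal does not do. The induction therefore does not close.

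The paper sidesteps all of this with a sharper choice of auxiliary data. It forms a single graph $G_2$ from $G_1$ by deleting one copy of the edge $(i,i+1)$ together with an edge $(i-1,k)$ and inserting $(i-1,i)$ and $(i,k)$, then applies the single operator $x_{i+1}\partial_i$ to $G_2e^f$. The terms produced are arranged so that, after one edge exchange justified by Lemma \ref{fef} and a single Pl\"ucker operation removing one crossing, everything cancels identically except $-G_1e^f$; there are no leftover remainder graphs and no induction or monovariant is needed. To rescue your approach you would need an exact cancellation scheme of that kind, not a descent argument.
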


\begin{proof} 
Let $G_1$ be a valuence 2 graph containing a cyclic loop of length 2, running through vertices $i,i+1$. Then vertex $i-1$ must be connected to some vertex $k\neq i-1, i,i+1$. Let $G_2$ be the graph obtained from $G_1$ by removing two edges $(i,i+1)$ and $(i-1,k)$, and replacing them with $(i-1,i)$, $(i,k)$ respectively, as in figure 3, while keeping all other edges the same (not shown in figure).

\begin{figure}[htb]
\vskip-1in
\hskip-2in\includegraphics[height=7in,width=5.5in,angle=0]{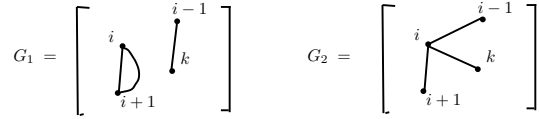}
\vskip-5in\caption{A cyclic loop of length 2, and its modification.}
\end{figure}

We will show that
$$
x_{i+1}\partial_i(G_2e^f)\equiv -G_1e^f.
$$

\newpage
Computing the left side (suppressing all irrelevant edges): 

\begin{figure}[htb]
\vskip-1in
\hskip-2in\includegraphics[height=8.5in,width=7in,angle=0]{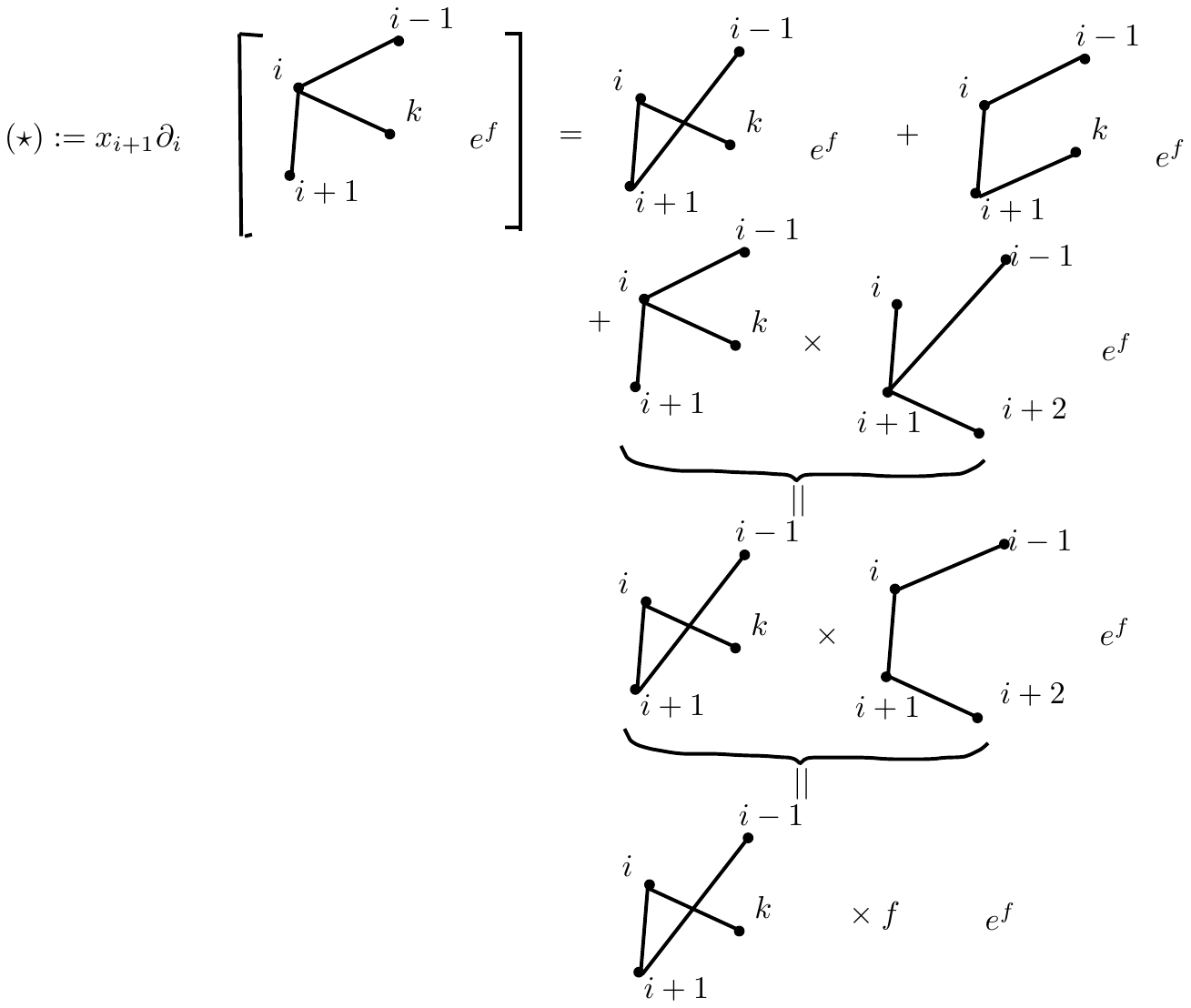}
\vskip-4in\caption{$\fg\fl_n$ action on $G_2e^f$.}
\end{figure}
By exchanging the two edges $(i-1,i+1)$ and $(i-1,i)$ from the two factor graphs in the third term on the right side, and by applying the preceding lemma, we get

\begin{figure}[htb]
\vskip-1in
\hskip-1in\includegraphics[height=7in,width=7in,angle=0]{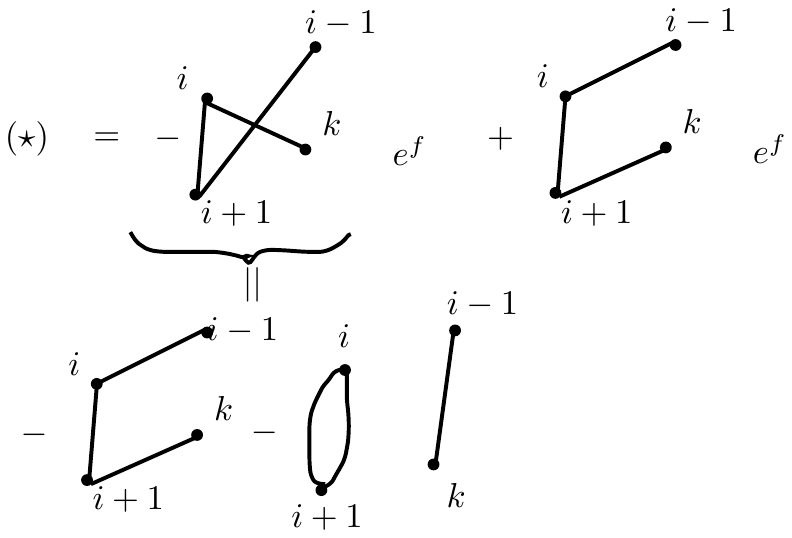}
\vskip-4.5in\caption{Applying a Pl\"ucker operation.}
\end{figure}
As shown in figure 5, now applying a Pl\"ucker operation to remove the crossing in the first term on the right side yields
$$
(\star)=x_{i+1}\partial_i(G_2e^f)\equiv -G_1 e^f.
$$
This completes the proof.
\end{proof}

A parallel calculation, where the cyclic loop of length 2 is replaced one of length $s$, gives

\begin{lem}
If $G_1$ is a valence 2 graph containing a cyclic loop of length $s\leq N-1$, then 
$$
0\equiv x_{i+s-1}\partial_i(G_2e^f)\equiv - G_1e^f+ G_3e^f
$$
for some graphs $G_2,G_3$, where $G_3$ has loops of lengths at most $s-1$.
\end{lem}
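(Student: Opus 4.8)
The plan is to mimic the structure of the preceding lemma (the length-2 case), generalizing the graphical $\fg\fl_N$-computation to a cyclic loop of arbitrary length $s\leq N-1$. Suppose $G_1$ is a valence 2 graph containing a cyclic loop running through consecutive vertices $i,i+1,\dots,i+s-1,i$. As in the $s=2$ case, I would first identify an auxiliary graph $G_2$ obtained from $G_1$ by a local surgery near the loop: remove the edge $(i,i+1)$ closing part of the loop and an edge incident to a neighboring vertex, and reconnect them so that applying the operator $x_{i+s-1}\partial_i$ produces $G_1$ as one of its terms. The key computational input is the graphical action formula \eqref{action-xpq}, which tells us that $x_{i+s-1}\partial_i$ acts on each affected edge by reattaching an endpoint landing on vertex $i$ to vertex $i+s-1$.

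Next I would carry out the explicit expansion of $x_{i+s-1}\partial_i(G_2 e^f)$ using the derivation property. Since $x_{i+s-1}\partial_i\in\hat\fg$, the whole expression is $\equiv 0$. The derivation acts both on the graph factor and, via the $e^f$ factor, reproduces multiplication by $x_{i+s-1}\partial_i f$; but the relevant mechanism, exactly as in the length-2 lemma, is that the several terms coming from the different affected edges split into: (i) a term equal to $-G_1 e^f$ (the ``desired'' reconstruction of the cyclic loop), (ii) terms that can be simplified using Lemma~\ref{fef} and by swapping edges between the two graph factors, and (iii) a term with a crossing that one removes by a single Pl\"ucker operation (Figure 1). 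The Pl\"ucker resolution and the edge-swap are precisely the steps that in the $s=2$ case collapsed the extra terms; here the bookkeeping is longer because the loop has $s$ vertices, but the crossing removed always involves the two edges meeting at vertex $i$, and the resulting graphs $G_3$ have their cyclic loops shortened—each Pl\"ucker operation strictly decreases the loop length, so $G_3$ contains only cyclic loops of length at most $s-1$.

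The main obstacle I anticipate is verifying that after the Pl\"ucker operation and the application of Lemma~\ref{fef}, the surviving graph $G_3$ genuinely has all its cyclic loops bounded by $s-1$, rather than merely having \emph{one} shortened loop while possibly creating a new long loop elsewhere. The surgery is local—it only touches edges incident to the vertices $i,\dots,i+s-1$ and one external neighbor—so I would argue that edges outside the loop are untouched (they are ``suppressed'' in the figures precisely because they are inert under the operator), and that the reconnection strictly shortens the gap $|a-b|$ of the closing edge. This is the analogue of the distance-decreasing estimate $d(e)$ used in the earlier crossing-removal lemma. Once this length bound is established, the displayed relation
$$
0\equiv x_{i+s-1}\partial_i(G_2e^f)\equiv -G_1e^f+G_3e^f
$$
follows by collecting terms, and the lemma is proved.

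I would present the computation as a sequence of figures paralleling Figures 4 and 5, deferring the routine verification that untouched edges contribute nothing and that each edge-swap respects valence. The one genuinely new feature compared to $s=2$ is that the intermediate term requiring Lemma~\ref{fef} now involves a length-$s$ configuration, so I would note explicitly that the repeated application of Lemma~\ref{fef} (valid for any valence 2 graph) is what lets us trade the factor of $2$ and the $Gf$ term, exactly as before. The upshot is an inductive handle: combined with Lemma~\ref{cyclic-loop}, this lets us repeatedly shorten cyclic loops until we reach the length-$N$ loop corresponding to $f$ itself, which is the goal of statement (*).
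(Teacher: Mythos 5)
Your proposal follows essentially the same route as the paper, which itself gives no details here beyond stating that the result follows from ``a parallel calculation'' to the length-$2$ case (Figures~4 and~5): the same local surgery producing $G_2$, the same expansion of $x_{i+s-1}\partial_i(G_2e^f)$ via the derivation property and the action on $e^f$, and the same edge-swap plus Pl\"ucker resolution, with the leftover terms now assembling into $G_3e^f$ rather than vanishing. You correctly flag the one point the paper leaves implicit --- that the surgery is local and the Pl\"ucker operations only shorten the cyclic loop, so $G_3$ genuinely has loops of length at most $s-1$ --- and your outline is consistent with, and no less complete than, the paper's own treatment.
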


The two preceding lemmas and Lemma \ref{fef} imply

\begin{cor}\label{G1ef=ef}
If $G_1$ is a  degree $N$ monomial (graph), then $G_1e^f\equiv \text{const.}~e^f$.
\end{cor}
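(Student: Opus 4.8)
The plan is to use the earlier lemmas to collapse every degree-$N$ monomial to a scalar multiple of the single class $e^f$ in the coinvariant space $H^{Lie}_0(\hat\fg,Re^f)$. First I would dispose of the monomials $G_1$ that are not $T$-invariant: by the weight argument preceding $(*)$, since $f$ is $T$-invariant, $G_1e^f$ is a $\ft$-weight vector of the same weight as $G_1$, so if that weight is nonzero then applying $h\in\ft$ with $\lambda(h)\neq0$ gives $\lambda(h)G_1e^f=h\cdot(G_1e^f)\equiv0$, hence $G_1e^f\equiv0$ and the constant is $0$. It therefore remains to treat the $T$-invariant degree-$N$ monomials, which are precisely the valence $2$ graphs.

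For such a $G_1$, I would first apply Corollary \ref{no-crossings} to rewrite $G_1$, as an identity in $R$, as a linear combination $\sum_k c_kH_k$ of crossing-free graphs; since a Pl\"ucker operation changes no vertex valence, each $H_k$ is again a valence $2$ graph. I then claim each $H_k$ is either equal to $f$ or satisfies $H_ke^f\equiv0$. Indeed, a cyclic loop of length $N$ already uses up the full valence at every vertex, so the only valence $2$ graph containing one is $f$ itself; thus if $H_k\neq f$, then by Lemma \ref{cyclic-loop} it contains a cyclic loop of length $s$ with $2\leq s\leq N-1$.

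The heart of the argument is then a descending induction on the minimal cyclic-loop length $s$. The base case $s=2$ is exactly the lemma on cyclic loops of length $2$, giving $H_ke^f\equiv0$. For the inductive step, the lemma on cyclic loops of length $s$ yields $H_ke^f\equiv G_3e^f$ with $G_3$ a valence $2$ graph all of whose cyclic loops have length at most $s-1$, so $G_3e^f\equiv0$ by induction, whence $H_ke^f\equiv0$. Consequently $G_1e^f\equiv c\,fe^f$, where $c=\sum_{H_k=f}c_k$. Finally, I would identify $fe^f$ with $e^f$ up to a scalar: by the same computation as in Lemma \ref{fef}, applied to the unit $1\cdot e^f$ in place of a valence $2$ graph and using that $f$ is valence $2$ so that $\tfrac12 x_i\partial_i f=f$, one gets $(\tfrac12 x_i\partial_i+1)(e^f)=fe^f+e^f\equiv0$, i.e. $fe^f\equiv-e^f$. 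This is exactly where the choice $\beta=(0;1)$ enters. Combining, $G_1e^f\equiv-c\,e^f$, which is the claim.

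The step I expect to be the main obstacle is the termination and bookkeeping in the loop induction. One must be certain that applying the length-$s$ reduction genuinely produces a graph whose cyclic loops are \emph{all} strictly shorter, so that the minimal cyclic-loop length is a well-founded measure that strictly decreases; this requires tracking loop lengths carefully through the intermediate graphs (with crossings) produced by the $\hat\fg$-action, and through the re-application of Pl\"ucker reductions and of Lemma \ref{fef} used to collapse the degree-$2N$ terms back into degree $N$. Verifying that no long loop can be reintroduced during these steps is the delicate combinatorial point; everything else is the routine assembly described above.
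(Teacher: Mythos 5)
Your proposal is correct and assembles the proof exactly as the paper intends: the paper itself gives no argument beyond citing the two loop lemmas and Lemma \ref{fef}, and your reduction (weight argument for non-$T$-invariant monomials, Corollary \ref{no-crossings} to pass to crossing-free valence~$2$ graphs, descending induction on cyclic-loop length, and the computation $(\tfrac12 x_i\partial_i+1)(e^f)=fe^f+e^f\equiv 0$) is the natural filling-in of those ingredients. The ``delicate point'' you flag about loop lengths not increasing is already built into the statement of the length-$s$ lemma as given, so no further verification is needed on your part.
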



\begin{lem}\label{hfef=hef}
For any homogeneous $h\in R$, we have $hfe^f\equiv\text{const.~} he^f$.
\end{lem}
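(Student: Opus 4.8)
The plan is to reduce to the $T$-invariant case and then run the computation of Lemma~\ref{fef} with arbitrary valence in place of valence $2$. Write $m=\deg h/N$ and decompose $h=\sum_\lambda h_\lambda$ into $T$-weight vectors. Since the monomial $f$ is $T$-invariant, for each $\lambda$ both $h_\lambda e^f$ and $h_\lambda f e^f$ are weight vectors of weight $\lambda$. As observed just before $(*)$, when $\lambda\neq 0$ such a vector is an eigenvector of some $t\in\ft\subset\hat\fg$ with nonzero eigenvalue, so the relation $t\cdot(Re^f)\equiv 0$ forces it to be $\equiv 0$; hence only the weight-zero part $h_0$ contributes to either side of the asserted congruence, and we may assume $h=h_0$ is $T$-invariant, i.e. all of its graphs have valence $2m$ at every vertex.

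For such $h$ I would apply the diagonal generator $\half x_i\partial_i+1\in\hat\fg$ to $he^f$, exactly as in the proof of Lemma~\ref{fef}. Because this element lies in $\hat\fg$, its action carries $Re^f$ into $\hat\fg(Re^f)$, so $(\half x_i\partial_i+1)(he^f)\equiv 0$. Expanding by the product rule through $e^f$ and using \eqref{action-xpq}, the derivation $x_i\partial_i$ multiplies a valence-$2m$ graph by $2m$, while $x_i\partial_i f=2f$ because vertex $i$ meets exactly the two edges $(i-1,i)$ and $(i,i+1)$ of $f$; the constant summand $+1$ acts by scalar multiplication. Thus
\[
0\equiv(\half x_i\partial_i+1)(he^f)=(m+1)he^f+hfe^f,
\]
which gives $hfe^f\equiv-(m+1)he^f$, the constant $-(m+1)$ depending only on $\deg h$. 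For $m=1$ this recovers Lemma~\ref{fef}.

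A slicker route that bypasses the weight reduction is to apply the averaged operator $\frac1N\sum_{i=1}^N(\half x_i\partial_i+1)=\frac1{2N}\sum_i x_i\partial_i+1\in\hat\fg$ to $he^f$ for an arbitrary homogeneous $h$: here $\sum_i x_i\partial_i$ is twice the Pl\"ucker-degree operator, so it acts by $2mN$ on $h$ and by $2N$ on $f$, and the same computation again yields $(m+1)he^f+hfe^f\equiv 0$. I do not expect a genuine obstacle, the statement being a direct valence-$2m$ analogue of Lemma~\ref{fef}; the only points needing care are the bookkeeping that the resulting scalar $-(m+1)$ is uniform in $i$, and the fact that the constant term $+1$, not being a derivation, does not act through $e^f$.
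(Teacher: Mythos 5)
Your proposal is correct and is essentially the paper's own proof: the paper likewise applies the diagonal element $\half x_i\partial_i+1\in\hat\fg$ to $he^f$ and reads off $0\equiv he^f+\text{const.}\,he^f+hfe^f$. Your averaged-operator variant and explicit identification of the constant as $-(m+1)$ are harmless refinements (the averaged version even handles non-weight-vector $h$ directly), but the underlying computation is identical.
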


\begin{proof}
We have
$$
0\equiv(\half x_i\partial_i+1)(he^f)=he^f+\text{const.~}he^f+hf e^f
$$
hence the claim follows.
\end{proof}

Corollary \ref{G1ef=ef}, Lemma \ref{hfef=hef} and the next lemma imply the statement (*), completing the proof of Theorem \ref{rank1-points}.

\begin{lem}
Let $g\in R$ be homogeneous of degree $mN>0$. Then there exists a homogeneous $h\in R$ of degree $(m-1)N$ such that
$$
ge^f\equiv hfe^f.
$$
\end{lem}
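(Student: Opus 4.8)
The plan is to reduce the statement, via the graphical machinery already set up, to a single crossing-free valence-$2m$ graph, and then to transform it modulo $\hat\fg(Re^f)$ into graphs that visibly contain the big $N$-cycle $f=x_{1,2}\cdots x_{N-1,N}x_{N,1}$; each such graph factors as $f$ times a graph of degree $(m-1)N$, which supplies the desired $h$.

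First I would reduce to a normal form. Since $f$ is $T$-invariant, $e^f$ has $T$-weight $0$, so each $T$-weight component of $ge^f$ has a well-defined weight; as already observed in this section, any weight vector of nonzero weight is an eigenvector of a suitable element of $\ft\subset\fs\fl_N$ with nonzero eigenvalue and hence is $\equiv 0$ (on such a component we may take $h=0$). Thus I may assume $g$ is $T$-invariant, i.e. a valence-$2m$ graph. By Corollary \ref{no-crossings}, which expresses any graph as a combination of crossing-free graphs through the Pl\"ucker relations (an identity already holding in $R$), and by linearity of the assertion in $g$, I may further assume $g=G$ is a single crossing-free valence-$2m$ graph.

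The core is then an induction that grows a cyclic loop of $G$ up to length $N$. By Lemma \ref{cyclic-loop}, $G$ contains a cyclic loop; let $\ell(G)$ be the maximal length of such a loop. If $\ell(G)=N$, that loop uses all $N$ consecutive edges and is exactly the big cycle, so $G=f\cdot G'$ with $G'$ of degree $(m-1)N$, and $h:=G'$ works. If $\ell(G)<N$, then Lemma \ref{cyclic-loop} provides a cyclic loop through a vertex $i$ with $i$ not joined to $i+1$, and I would run the same operator-plus-Pl\"ucker computation as in the cyclic-loop reduction lemmas above, now arranged to increase rather than decrease the loop length: applying a suitable $x_{p}\partial_{q}\in\hat\fg$ to an auxiliary graph $\tilde G$ obtained from $G$ by one edge-swap, and using the integration-by-parts identity $(x_{p}\partial_{q}\tilde G)e^f\equiv -\tilde G\,(x_{p}\partial_{q}f)e^f$ (valid because $x_p\partial_q\in\hat\fg$) followed by a single Pl\"ucker operation to clear the crossing it creates, yields a congruence $Ge^f\equiv(\text{combination of graphs strictly smaller in the order }(N-\ell,\,I_m))$. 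Inducting lexicographically on $(N-\ell(G),\,I_m(G))$, each term on the right is $\equiv h_k f e^f$ for some $h_k$ of degree $(m-1)N$ by the induction hypothesis, and summing the $f$-factors gives $h$.

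The main obstacle is precisely this growing step. In the valence-$2$ situation of the earlier cyclic-loop lemmas the two operated vertices carry no further edges, whereas here each carries $2(m-1)$ additional spectator edges on which $x_p\partial_q$ also acts; the work is to show that all the extra terms so produced either reassemble into a graph with a strictly longer cyclic loop or have strictly smaller $I_m$, and to verify that the chosen well-order forces termination exactly at the big-cycle-containing graphs handled by the base case. Once this combinatorial step is in place, combining the lemma with Lemma \ref{hfef=hef} (via $ge^f\equiv hfe^f\equiv\text{const}\cdot he^f$) gives the degree-lowering move that, together with Corollary \ref{G1ef=ef} at degree $N$, drives the induction proving statement (*) and hence Theorem \ref{rank1-points}.
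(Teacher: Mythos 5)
Your reduction to a single $T$-invariant, crossing-free valence-$2m$ graph $G$ matches the paper, and so does the basic move (swap one edge to create the missing edge $(i-1,i)$, hit the auxiliary graph with $x_{i+1}\partial_i$, clear crossings with Pl\"ucker operations). But the heart of the lemma is the termination argument, and there you have a genuine gap that you yourself flag: you never establish that the spectator terms produced by $x_{i+1}\partial_i$ acting on the $2(m-1)$ extra edges at vertex $i$ are smaller in your proposed order $(N-\ell(G),\,I_m(G))$. There is no reason they should be. Acting on a spectator edge $(i,j)$ replaces it by $(i+1,j)$, which says nothing about the maximal cyclic loop length of the resulting graph, and after the Pl\"ucker reductions needed to restore crossing-freeness the quantity $I_m$ is uncontrolled (the earlier lemma only bounds $I_m$ along Pl\"ucker moves that preserve $I_a$, which is not the situation here). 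So the claimed congruence ``$Ge^f\equiv$ combination of graphs strictly smaller in $(N-\ell,I_m)$'' is unsubstantiated, and the induction has no verified base for descending.

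The idea you are missing is the paper's choice of measure: the $D$-value $D(G')$, defined as the number of edges of the target cycle $F$ that are \emph{absent} from $G'$. With this measure the decrease is transparent rather than something to be checked case by case: the swap deliberately creates the edge $(i-1,i)$, which belongs to $F$ but not to $G$, and \emph{every} term in $x_{i+1}\partial_i(G_1e^f)$ other than the one reconstituting $-Ge^f$ still contains that edge, hence has $D$-value strictly less than $D(G)$. One also checks that the swap does not delete any edge of $F$ (the doubled edge $(i,i+1)$ survives with multiplicity one, and if $k=i-2$ the edge $(i-2,i-1)$ occurs with multiplicity at least two), and that Pl\"ucker operations never increase the $D$-value, since edges of $F$ join adjacent vertices and are never involved in a crossing. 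Descending on $D$ terminates at $D=0$, i.e.\ at graphs containing $F$ as a subgraph, which factor as $f\cdot h$. I would recommend replacing your lexicographic order by this $D$-value; with that substitution the rest of your argument goes through essentially as in the paper.
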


\begin{proof}
Without loss of generality, we can assume that $g$ is a $T$-invariant monomial. Let $F,G$ be the graphs representing the monomials $f,g$ respectively. Again, by Corollary \ref{no-crossings} we may as well assume that $G$ has valence $2m$ but no crossings, hence $G$ contains at least one cyclic loop by Lemma \ref{cyclic-loop}. Introduce the $D$-value of a graph $G'$, $D(G')\leq N$, defined to be the number of edges in $F$ but not in $G'$. Since $G$ contains a cyclic loop, we have $D(G)<N$. If $D(G)=0$, then $G$ contains $F$ as a subgraph and we are done. So, we can assume that $0<D(G)<N$. It suffices to show that 
\eq{GG}{Ge^f\equiv\sum_i c_iG_i e^f}
where $c_i\in\Z$ and the $G_i$ are graphs with no crossings and with $D(G_i)<D(G)$. For then repeatedly applying this argument to the right side of \eqref{GG} yields a sum of graphs $G'$ with $D(G')=0$, and we are done.

By Lemma \ref{cyclic-loop} again, $G$ contains a cyclic loop of length $s$, containing some vertex $i$ such that $i$ is not connected to $i-1$ or $i+1$ by an edge. We will focus on the $i-1$ case with $s=2$. We omit a parallel (but messier) argument for the remaining case. As before (cf. Lemma \ref{cyclic-loop}), by valence consideration, $i-1$ must be connected to at least one other vertex $k\neq i-1,i,i+1$. Moreover, if $k=i-2$ then $G$ contains at least two edges connecting $i-2$ and $i-1$. Let $G_1$ be the graph obtained from $G$ by removing two edges $(i,i+1)$ and $(i-1,k)$, and replacing them with $(i-1,i)$, $(i,k)$ respectively, as in the next figure, while keeping all other edges the same (not shown in figure).

\begin{figure}[htb]
\vskip-1in
\hskip-2in\includegraphics[height=7in,width=5.5in,angle=0]{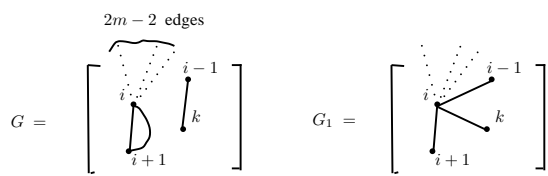}
\vskip-5in\caption{The graph $G$, and its modification $G_1$.}
\end{figure}

\newpage
We get
\begin{figure}[htb]
\vskip-1in
\hskip-2in\includegraphics[height=8.5in,width=7in,angle=0]{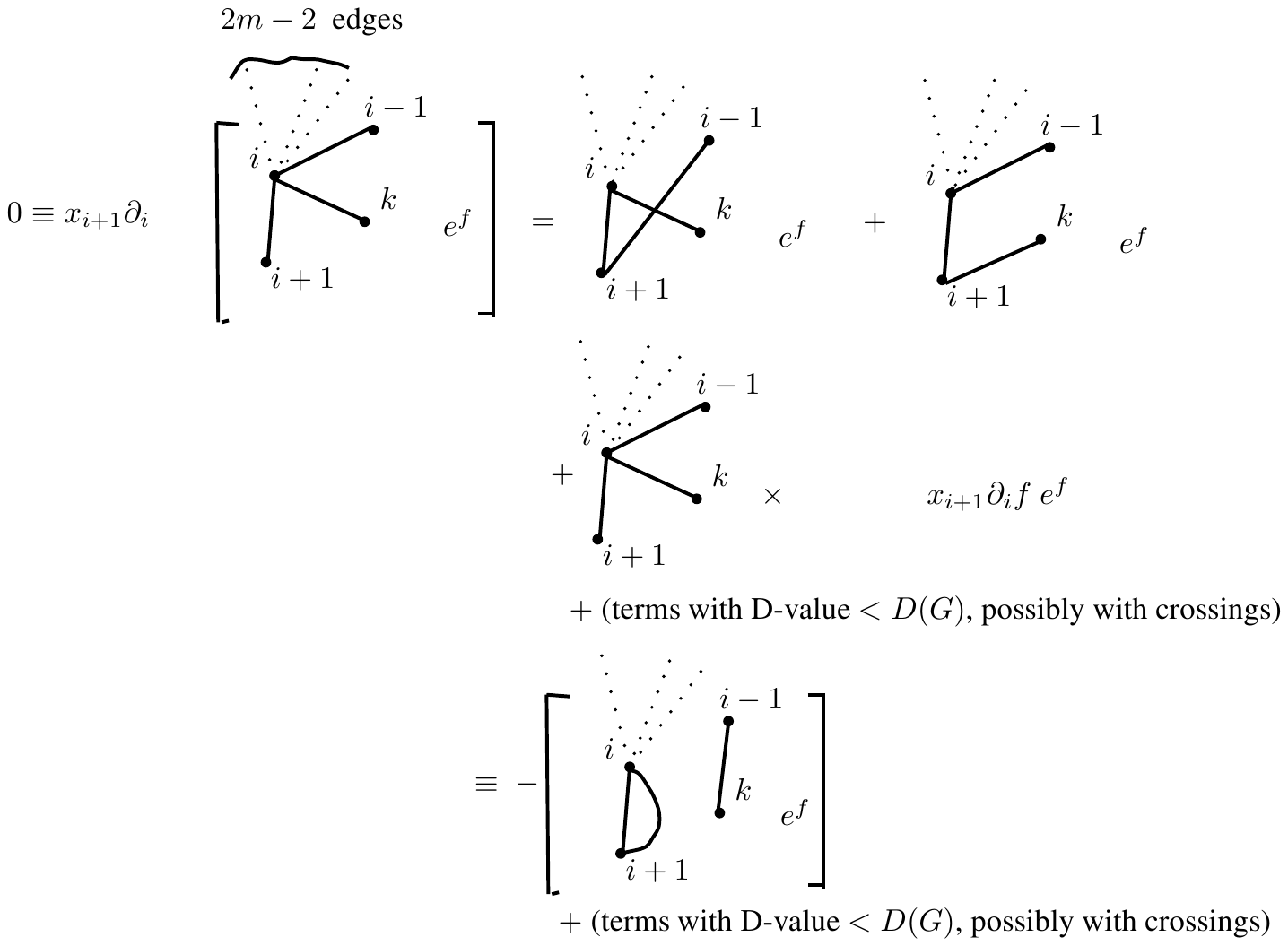}
\vskip-4in\caption{$\fg\fl_n$ action on $G_1e^f$.}
\end{figure}

Note that the terms with $D$-value $<D(G)$ above come from applying $x_{i+1}\partial_i$ to the $2m-2$ edges (dotted lines connected to $i$) in $G_1$. The $D$-value decreases by at least one relative to $D(G)$, because of the presence of the edge $(i-1,i)$ in each of those terms. Now, observe that a Pl\"ucker operation does not increase the $D$-value of a graph, and so we can apply such an operation to remove each crossing on the right side while keeping the $D$-value of each term $<D(G)$. Since the first term on the right side is $-Ge^f$, it follows that $Ge^f$ is equivalent to terms with $D$-value $<D(G)$ with no crossings, as desired.
\end{proof}

\section{A geometric proof}

We next give a geometric proof of Theorem \ref{rank1-points}.

Write $\C^N = \C e_1\oplus\cdots\oplus \C e_N$. We have $X\inj \P(\bigwedge^2 \C^N)$, and $X$ is identified with the space of decomposible tensors $v\wedge w$ upto scale. Write $v=\sum a_ie_i,\ w=\sum b_ie_i$. Then $x_{ij}(v\wedge w) = a_ib_j-a_jb_i$. We identify
\eq{}{X-\{x_{12}=0\} \cong Hom(\C e_1\oplus \C e_2, \bigoplus_{3\le i\le N}\C e_i)
}
in the standard way, which amounts to taking decomposible elements $v\wedge w$ with $v=e_1+\sum_{i\ge 3}a_ie_i$ and $w=e_2+\sum_{i\ge 3}b_ie_i$. We want to compute
\eq{}{H^{2N-4}(X-\{x_{12}\cdots x_{N-1,N}x_{N,1}=0\})
}
We have
\ml{3}{X-\{x_{12}\cdots x_{N-1,N}x_{N,1}=0\} \cong \\
\Spec \C[a_3,b_3,\dotsc,a_N,b_N,\frac{1}{a_3},\frac{1}{a_3b_4-a_4b_3},\dotsc,\frac{1}{a_{N-1}b_N-a_Nb_{N-1}},\frac{1}{b_N}]
}
Define $V_3=\Spec \C[a_3,b_3,1/a_3] \cong \G_m\times \G_a$ where I write $\G_m=\C^\times$ for the multiplicative group and $\G_a=\C$ for the additive group. More generally, for $p\ge 4$
\eq{}{V_p := \Spec \C[a_3,b_3,\dotsc,a_p,b_p,\frac{1}{a_3},\frac{1}{a_3b_4-a_4b_3},\dotsc,\frac{1}{a_{p-1}b_p-a_pb_{p-1}}]
}

Let $\cG:= \G_m \rtimes \G_a$ be the group of affine transformations $x\mapsto ux+v$. Let $\pi_p: V_p \to V_{p-1}$ be the evident projection. We have
\ml{}{\pi_p^{-1}(\alpha_3,\beta_3,\dotsc,\alpha_{p-1},\beta_{p-1}) = \\
\{ (\alpha_3,\beta_3,\dotsc,\alpha_{p},\beta_{p})\ |\ \det\begin{pmatrix}\alpha_{p-1} & \alpha_p \\\beta_{p-1} & \beta_p\end{pmatrix} \neq 0\} 
}
The action of $\cG$ on $V_p/V_{p-1}$ given by 
\ml{}{(u,v)\cdot (\ldots,\alpha_{p-1},\beta_{p-1},\alpha_p,\beta_p) = \\
(\ldots, \alpha_{p-1},\beta_{p-1},u\alpha_p+v\alpha_{p-1},u\beta_p+v\beta_{p-1})
}
makes $V_p$ a principal $\cG$-bundle over $V_{p-1}$. But any such $\cG$-bundle is split, because $V_{p-1}$ affine implies $H^1(V_{p-1},\G_a)=(0)$, and $H^1(V_{p-1},\G_m)=(0)$ implies the set of $\cG$-bundles on $V_{p-1}$ which split when pushed out to $\G_a$ has one element. Thus $V_p\cong V_{p-1}\times \G_m\times \G_a$ as a variety. We conclude
\eq{7}{V_p \cong \G_m^{p-2}\times \G_a^{p-2}. 
}
In particular, 
\eq{8}{H^i(V_p, \Z)=(0),\ i\ge p-1.
} 

Next define $W_p \inj V_p$ to be the closed subvariety defined by $b_p=0$. One gets a diagram of bundles
\eq{9}{\begin{CD}W_p @>>> V_p \\
@VV \G_m V @VV\cG V \\
V_{p-1}-W_{p-1} @>>> V_{p-1}.
\end{CD}
}
These are open subvarieties of affine space, so the Picard groups vanish and we have 
\eq{10}{W_p \cong (V_{p-1}-W_{p-1})\times \G_m.
}
We prove by induction on $p\ge 3$ that 
\eq{11}{H^i(V_p-W_p,\Z)=(0);\ \  i\ge 2p-3;\quad H^{2p-4}(V_p-W_p) = \Z.
}
For $p=3$, the assertions are $H^i(\G_m^2)=(0), i\ge 3$ and $H^2(\G_m^2) = \Z$, both of which are true. For $p>3$ we have the Gysin sequence
\eq{12}{H^{i}(V_p) \to H^{i}(V_p-W_p)\to H^{i-1}(W_p) \to H^{i+1}(V_p)
}
Since $2p-4\ge p-1$ in our case, we see from \eqref{8}, \eqref{10}, and \eqref{12} that 
\ml{13}{H^{i}(V_p-W_p)\cong H^{i-1}(W_p)\cong H^{i-1}((V_{p-1}-W_{p-1})\times \G_m)\cong \\
H^{i-1}((V_{p-1}-W_{p-1})\oplus H^{i-2}(V_{p-1}-W_{p-1})
}
By induction we get the desired vanishing for $i\ge 2p-3$. For $i=2p-4$ the same argument yields 
\eq{14}{H^{2p-4}(V_p-W_p) \cong H^{2p-6}(V_{p-1}-W_{p-1})\cong \Z.
}
Again we conclude by induction. 

In the case $p=N$ we get from \eqref{14} that $H^{2N-4}(V_N-W_N) \cong \Z$ as desired, completing the geometric proof of Theorem \ref{rank1-points}.

\noindent {\bf Acknowledgements.} B.H.L. is partially supported by NSF FRG grant DMS-0854965, and S.T.Y. by NSF FRG grant DMS-0804454. A.H. has benefited greatly from discussions with Marcel B\"okstedt and Shenghao Sun, and part of the work was done during his visit to the Tsinghua Mathematical Sciences Center. S.B. would also like to acknowledge support from the Tsinghua Mathematical Sciences Center and from the Tata Institute for Fundamental Research. His role in the project grew out of conversations he had at these institutions in the fall and winter of 2011-2012. 

{\it Note added:} It has recently been shown that Conjecture 1.3 is in
fact a special case of a much more general formula for solution ranks
of tautological systems. This result will appear in a forthcoming
paper \cite{HLZ}.


\noindent\address {\SMALL S. Bloch, 5765 S. Blackstone Ave., Chicago IL 60637.  \\ spencer\_bloch@yahoo.com.}
\vskip-.15in

\noindent\address {\SMALL A. Huang, Department of Mathematics, Harvard University, Cambridge MA 02138. \\ anhuang@math.harvard.edu.}
\vskip-.15in

\noindent\address {\SMALL B.H. Lian, Department of Mathematics, Brandeis University, Waltham MA 02454.\\ lian@brandeis.edu.}
\vskip-.15in

\noindent\address {\SMALL V. Srinivas, School of Mathematics, Tata Institute for Fundamental Research,  Homi Bhabha Road, Mumbai 400005, India.\\ srinivas@tifr.res.in}
\vskip-.15in

\noindent\address  {\SMALL S-T. Yau, Department of Mathematics, Harvard University, Cambridge MA 02138. \\ yau@math.harvard.edu.}

\end{document}